\renewcommand{\Re}{\mathop{\rm Re}\nolimits}
\renewcommand{\Im}{\mathop{\rm Im}\nolimits}
\numberwithin{equation}{section}
\newtheorem{theorem}{Theorem}
\newtheorem{proposition}{Proposition}
\newtheorem{remark}{Remark}
\newenvironment{customtheorem}[1]
{\innercustomthm}
{\endinnercustomthm}
\begin{document}

\title{Asymptotic stage of modulation instability for the nonlocal nonlinear Schr\"odinger equation}

\author{Yan Rybalko$^{\dag}$ and Dmitry Shepelsky$^{\dag,\ddag}$\\
	\small\em {}$^\dag$ B.Verkin Institute for Low Temperature Physics and Engineering\\
	\small\em {} of the National Academy of Sciences of Ukraine\\
	\small\em {}$^\ddag$ V.Karazin Kharkiv National University}
\date{}

\maketitle

\begin{abstract}
We study the initial value problem for 
the integrable nonlocal nonlinear Schr\"odinger (NNLS) equation
\[
iq_{t}(x,t)+q_{xx}(x,t)+2 q^{2}(x,t)\bar{q}(-x,t)=0
\]
with symmetric boundary conditions: $q(x,t)\to Ae^{2iA^2t}$ as $x\to\pm\infty$, where $A>0$ is an arbitrary constant.
We describe the asymptotic stage of modulation instability for the NNLS equation
by computing the large-time asymptotics of the solution $q(x,t)$ of this initial value problem.
We shown that it exhibits a non-universal, in a sense, behavior: the asymptotics of $|q(x,t)|$ depends on details of the initial data $q(x,0)$.
This is in a sharp contrast with the local classical NLS equation, where
the long-time asymptotics of the solution depends on the initial value through the phase parameters only.
The main tool used in this work is the inverse scattering transform method applied in the form of the matrix Riemann-Hilbert problem.
The  Riemann-Hilbert problem associated with the original initial value problem is   analyzed asymptotically by the nonlinear steepest decent method.
\end{abstract}



\section{Introduction}
In the present paper we consider the initial value (IV) problem for the so-called focusing nonlocal nonlinear Schr\"odinger (NNLS) equation (here and below $\bar{q}$ is the complex conjugate of $q$)
\begin{subequations}
\label{fsivp}
\begin{align}
\label{fsivp-a}
& iq_{t}(x,t)+q_{xx}(x,t)+2q^{2}(x,t)\bar{q}(-x,t)=0, & & x\in\mathbb{R},\,t>0,  \\
\label{fsivp-b}
& q(x,0)=q_0(x), & &  x\in\mathbb{R}, 
\end{align}
with symmetric nonzero boundary conditions:
\begin{equation}
\label{fsivp-c}
q(x,t) \to Ae^{2iA^2t}, \quad  x\to\pm\infty,\quad t\geq 0,
\end{equation}
with some $A>0$ (throughout the paper we assume that $q(x,t)$ approaches the boundary values sufficiently fast).
\end{subequations}

The NNLS equation was introduced by Ablowitz and Musslimani in 2013 \cite{AMP} as a \textit{PT-}symmetric reduction ($r(x,t)=\bar{q}(-x,t)$) 
of the well-known Ablowitz-Kaup-Newell-Segur (AKNS) system \cite{AKNS} (a.k.a. coupled Schr\"odinger equations):
\begin{subequations}
	\label{fscs}
	\begin{align}
	\label{fscs-a}
	iq_t(x,t)+q_{xx}(x,t)+2q^2(x,t)r(x,t)=0,\\
	\label{fscs-b}
	-ir_t(x,t)+r_{xx}(x,t)+2r^2(x,t)q(x,t)=0.
	\end{align}
\end{subequations}
This  reduction leads to (\ref{fsivp-a}) and is consistent with  the \textit{PT-}symmetry condition \cite{BB}: if $q(x,t)$ is a solution of (\ref{fsivp-a}), then $\bar{q}(-x,-t)$ is a solution as well.
Therefore, the NNLS equation is related to the $PT$-symmetric theory, which is a state-of-the-art area in modern physics (see, e.g., \cite{B16, KYZ16, MA16} and references therein).
Also this equation is connected to the unconventional system
of coupled Landau-Lifshitz equations \cite{GA, R21} and it can be obtained as a small amplitude quasi-monochromatic reduction
of the nonlinear Klein-Gordon equation \cite{AM19}. Moreover, it can be viewed as a particular case of  Alice-Bob nonlocal systems \cite{Lou18}.

The IV problem (\ref{fsivp-a}), (\ref{fsivp-b}) with 
the boundary conditions $q(x,t)\to q_{\pm}(t)=Ae^{i\theta_\pm(t)}$ as $x\to\pm\infty$  was considered in \cite{ALM18}, where it was shown that the
admissible
 boundary functions $q_{\pm}(t)$ are neither exponentially growing nor decaying only if $q_{\pm}(t)=Ae^{2iA^2t}$ or $q_{\pm}(t)=\pm Ae^{-2iA^2t}$. 
Notice that the spectral pictures for these two cases are significantly different. For $q_{\pm}(t)=Ae^{2iA^2t}$, the continuous spectrum consists, as in the case of the classical focusing nonlinear Schr\"odinger (NLS) equation \cite{BK14,BM17}, of the real axis $\mathbb{R}$ and the segment $[-iA, iA]$, whereas when $q_{\pm}(t)=\pm Ae^{-2iA^2t}$ the continuous spectrum is purely real and has a gap $(-A, A)$ and thus the spectral picture is similar to that for the classical defocusing NLS equation \cite{ZS73, IU88} (see Sections 3 and 4 in \cite{ALM18} for details).

The choice of the (symmetric) boundary conditions (\ref{fsivp-c}) is inspired by considerable interest in nonlinear dynamics of modulation instability (MI)
in recent years (see, e.g., \cite{BM16, ET20, GS18, KHB16, KSER19, ZG13} and references therein).
The MI (a.k.a. the Benjamin-Feir instability \cite{BF67} in the context of
water waves) is related to numerous important physical phenomena, such as envelope solitons (``bright solitons''), envelope shocks, freak (rogue) waves, effects of hydrodynamic instability, to name but a few (see, e.g., \cite{BLS21-cimp, DR09, DDEG14, KPS09, ZO09} and references therein).
For the focusing NLS equation (which is  a conventional model for studying the MI) the nonlinear stage of modulation instability was studied in \cite{BM17}, where the authors showed that in the solitonless case the large-time asymptotics of the modulus of the solution is formed solely by the boundary conditions of the problem
while only the phase parameters depend on the initial data.
Therefore, the solution exhibits the same large-time behavior for a large class of initial data and in this sense the asymptotic stage of MI is universal. 
Later, this result was numerically established for other NLS-type models \cite{BLMT18}.

For both focusing NNLS and NLS equations, the stationary wave $Ae^{2iA^2t}$ is unstable under  small perturbations.
In \cite{San18} Santini considered the periodic Cauchy problem for (\ref{fsivp-a}) and showed, using the perturbation approach, that Akhmediev-type rogue waves (see Section 2 in \cite{San18}; cf. \cite{YY} where the Peregrine-type rogue waves were obtained) are relevant for describing the evolution of the solution in an intermediate region, i.e., for $t$ such as $1\ll t\ll O(|\log\varepsilon|)$, where the perturbation of the stationary wave is $O(\varepsilon)$.
Particularly, it was rigorously demonstrated that 
since Akhmediev-type soliton solutions of the NNLS equation blow up,  in general, in finite time, the solution also blows up in the linear stage of MI.

In the present paper we study the nonlinear stage of MI for the nonlocal NLS equation. We do so by analyzing the solution $q(x,t)$ as $t\to\infty$, i.e., beyond the intermediate region considered in \cite{San18}. This can’t be done by linearizing equation (\ref{fsivp-a}), so we utilize the full force of integrability of the IV problem (\ref{fsivp}).
Applying the inverse scattering transform (IST) method in the form of the matrix Riemann-Hilbert (RH) factorization problem \cite{FT, NMPZ84}, we adapt the nonlinear steepest-descent method (Deift and Zhou method, see \cite{DZ, DIZ} and \cite{DVZ94, DVZ97, MM, MM2} for its extensions) to the associated oscillatory RH problem.
We found that the asymptotic stage of MI in the case of the NNLS equation essentially depends on the initial data. Particularly, the modulus of the leading order asymptotic term depends explicitly on $q_0(x)$ (via the associated spectral functions) in all asymptotic regions (see Theorems \ref{fsth1pw} and \ref{fsth2} below).
In this sense, the nonlinear stage of MI is \textit{non-universal} in the case of the nonlocal equation (\ref{fsivp-a}), which is in a sharp contrast with the local NLS equation, where only the phase parameters depend on the initial data \cite{BLMT18, BM17}.

To be more specific, we present here rough results on the asymptotic behavior of  $q(x,t)$ 
(see Theorems \ref{fsth1pw} and \ref{fsth2} below for the precise results).
\begin{customtheorem}{$\mathbf{1^\prime}$} (Plane wave region)
	\\
	Assuming that the initial data $q_0(x)$ is such that the solitons are absent (i.e., the associated spectral functions $a_j(k)$, $j=1,2$, see Section \ref{fssectspfunct}, have no zeros in the corresponding domains) and the winding of the argument of certain spectral function is less than $\pi$ (see Assumption (\ref{fsarg-ass})), the asymptotics of the solution $q(x,t)$ 
	of problem (\ref{fsivp})
	along the rays $\frac{x}{4t}=const$ with $|\frac{x}{4t}|>\sqrt{2}A$ has the form:
	\begin{subequations}\label{fspl1}
		\begin{align}
		&q(x,t)=Ae^{-2\Im F_{\infty}(k_1)}
		e^{2i(A^2t+\Re F_{\infty}(k_1))}
		+o(1),&& t\to\infty,\quad \frac{x}{4t}>\sqrt{2}A,\\
		&q(-x,t)=Ae^{2\Im F_{\infty}(k_1)}
		e^{2i(A^2t+\Re F_{\infty}(k_1))}
		+o(1),&& t\to\infty,\quad -\frac{x}{4t}<-\sqrt{2}A,
		\end{align}
	\end{subequations}
	where $k_{1}=\frac{1}{2}\left(-\xi-\sqrt{\xi^2-2A^2}\right)$ 
	with $\xi=\frac{x}{4t}$ for $x>0$
	and the \textit{complex} constant $F_{\infty}(k_1)$ (which depends on  $\xi$ through $k_1$) is given by (\ref{fsreimFinf}).
\end{customtheorem}

\begin{customtheorem}{$\mathbf{2^\prime}$} (Modulated elliptic wave region)\\
	Under the same assumptions as in Theorem $1^{\prime}$ (with only difference that instead of Assumption (\ref{fsarg-ass}) we make Assumption (\ref{fsDelta-argew})),
	the asymptotics of the solution $q(x,t)$ 
		of problem (\ref{fsivp})
	along the rays $\frac{x}{4t}=const$ with $0<|\frac{x}{4t}|<\sqrt{2}A$ has the form:
	\begin{subequations}\label{fsasellw1}
		\begin{align}
		\nonumber
		&q(x,t)=(A+\Im\alpha)
		e^{-2\Im G_{\infty}(k_0,\alpha)}
		\frac{\Theta(\frac{\Omega t}{2\pi}
			+\frac{\omega}{2\pi}-\frac{1}{4}-v_{\infty}+c)
			\Theta(v_{\infty}+c)}
		{\Theta(\frac{\Omega t}{2\pi}
			+\frac{\omega}{2\pi}-\frac{1}{4}+v_{\infty}+c)
			\Theta(-v_{\infty}+c)}\\
		&\qquad\qquad
		\times e^{2i(tH_{\infty}+\Re G_{\infty}(k_0,\alpha))}
		+o(1),\quad 0<\frac{x}{4t}<\sqrt{2}A,\\
		\nonumber
		&q(-x,t)=(A+\Im\alpha)
		e^{2\Im G_{\infty}(k_0,\alpha)}
		\frac{\Theta(\frac{\Omega t}{2\pi}
			+\frac{\overline{\omega}}{2\pi}-\frac{1}{4}
			+\overline{v_{\infty}}-\overline{c})
			\Theta(-\overline{v_{\infty}}-\overline{c})}
		{\Theta(\frac{\Omega t}{2\pi}
			+\frac{\overline{\omega}}{2\pi}-\frac{1}{4}
			-\overline{v_{\infty}}-\overline{c})
			\Theta(\overline{v_{\infty}}-\overline{c})}\\
		&\qquad\qquad
		\times e^{2i(tH_{\infty}+\Re G_{\infty}(k_0,\alpha))}
		+o(1),\quad 0>-\frac{x}{4t}>-\sqrt{2}A.
		\end{align}
	\end{subequations}
	Here the genus-1 theta function $\Theta$ is given by (\ref{fsg1thf}) and the constants $\alpha$, $\Omega$, $v_{\infty}$ and $c$, which  do not depend on the initial data $q_0(x)$, are given by (\ref{fsreaima}), (\ref{fsOmega}), (\ref{fsvinfty}) and (\ref{fsc}), respectively. 
	The complex constants $\omega$, $H_{\infty}$ and $G_{\infty}(k_0,\alpha)$, which depend on the initial data $q_0(x)$, are given by (\ref{fsomega}), (\ref{fsH0}) and (\ref{fsGinfty}), respectively.
	Moreover, $G_{\infty}(k_0,\alpha)$ depends on the ray $\xi$ through $k_0$ and $\alpha$, where the former is defined as a unique solution of  equation (\ref{fsintk0}).
\end{customtheorem}
\begin{remark}
	  Assumption (\ref{fsarg-ass})  is sufficient for establishing the asymptotics for the nonlocal NLS equation in the plane wave regions
		(Theorem $1^\prime$).
	Though the asymptotics in these regions has not been considered before 
	(to the best of our knowledge) for nonlocal equations, the analysis needed for obtaining this result is close to that used in the decaying 
	\cite{RS} and ``modulated constant'' zones \cite{RS20,RSs}.
	Remarkably, a similar assumption (see Assumption (\ref{fsDelta-argew})) on the winding of the argument of the spectral function $(1+r_1(k)r_2(k))$ turns out to be  sufficient for establishing the asymptotics in the modulated elliptic wave region as well (Theorem $2^\prime$).
	Here the reasoning  is more involving; particularly, we have to deal with a non-analytic phase function and unbounded entries of the jump matrix at a neighborhood of the stationary phase point (see Appendix C, item (ii)).
\end{remark}
\begin{remark}
In the case of the NNLS equation, the reflection coefficients $r_1(k)$ and $r_2(k)$ (see Section \ref{fssectspfunct}) are not connected (in contrast with the classical NLS equation);  this ``lack of symmetry'' implies, in particular, 
that $F_{\infty}(k_1)$ and $G_{\infty}(k_0,\alpha)$ in (\ref{fspl1}) and (\ref{fsasellw1}) can be
 complex-valued and thus the modulus of the asymptotics depends on the initial data.
A similar lack of symmetry holds for other integrable nonlocal equations (see, e.g., \cite{AM17, ALM20, F16, HFX19, Y18}), which allows us to conjecture that 
the modulation instability in other nonlocal models should also exhibit
 a kind of non-universal behavior.
\end{remark}
\begin{remark}
It is known that the solution of the problem (\ref{fsivp}) can blow up in finite time.
Particularly, the ``breathing'' two-soliton solution \cite{ALM18}, Peregrine-type \cite{YY} and Akhmediev-type \cite{San18} solutions can blow up at a discrete set of points in the $(x,t)$ plane, even in the cases when the initial profile (i.e., $q(x,t=0)$) is smooth.
On the other hand,  away from  point singularities, a solution can be smooth and satisfying the boundary conditions for all $t\geq0$.
With this respect,
the Riemann-Hilbert problem  provides a formalism for constructing global
solutions outside  the  domains where they may have irregular behavior.
Indeed, (i) the RH problem is intrinsically local and (ii) the jump matrix is usually analytic w.r.t. parameter(s) (say, $x$), which imply (in view of the analytic Fredholm alternative \cite{Zh89}) that the solution of the associated RH problem is meromorphic in $x$ (or, otherwise, the problem is solvable for no $x$; see, e.g., Chapter 3, Section 1 in \cite{FIKN}).
Particularly, in this paper we obtain the extension of the solution from a neighborhood of $t=\infty$ into the sectors $\frac{x}{4t}\in\mathbb{R}\setminus\{\pm\sqrt{2}A,0\}$.
\end{remark}

The article is organized as follows. In the Section 2 we develop the inverse scattering transform method in the form of the RH problem.
In Sections 3 and 4 we obtain the long-time asymptotics of the solution in the plane wave region and modulated elliptic wave region respectively, establishing the non-universality of the asymptotic stage of the modulation instability in these zones.

\section{Inverse scattering transform and the Riemann-Hilbert problem}\label{fsist}

In this section we reformulate the IST method for the IV problem (\ref{fsivp}) which was first developed in \cite{ALM18}, in the form suitable for asymptotic analysis, particularly, keeping the  determinant of a matrix constructed from  the Jost solutions to be equal to 1. We also point out that since the boundary conditions in (\ref{fsivp}) are symmetric, the implementation of the IST method is close to that in the case of the classical NLS equation \cite{BK14, BM17}; but the associated spectral functions satisfy different symmetry relations, which affects significantly  the resulting asymptotic formulas.

\subsection{Direct scattering}

The NNLS equation (\ref{fsivp-a}) is a compatibility condition of the following system of linear equations \cite{AMP} (the so-called Lax pair)
\begin{align}
\label{fsLP}
&\Phi_{x}+ik\sigma_{3}\Phi=U\Phi,\\
&\Phi_{t}+2ik^{2}\sigma_{3}\Phi=V\Phi,
\end{align}
where $\sigma_3=\left(\begin{smallmatrix} 1& 0\\ 0 & -1\end{smallmatrix}\right)$ is the third Pauli matrix, $\Phi(x,t,k)$ is a $2\times2$ matrix-valued function, $k\in\mathbb{C}$ is a spectral parameter, and the $2\times2$ matrix coefficients $U(x,t)$ and $V(x,t,k)$ are given in terms of  
$q(x,t)$ as follows:	
\begin{equation}
U(x,t)=\begin{pmatrix}
0& q(x,t)\\
-\bar{q}(-x,t)& 0\\
\end{pmatrix},\qquad 
V(x,t,k)=\begin{pmatrix}
V_{11}(x,t)& V_{12}(x,t,k)\\
V_{21}(x,t,k)& V_{22}(x,t)\\
\end{pmatrix},
\end{equation}
where $V_{11}=-V_{22}=iq(x,t)\bar{q}(-x,t)$, $V_{12}=2kq(x,t)+iq_{x}(x,t)$, and
$V_{21}=-2k\bar{q}(-x,t)+i(\bar{q}(-x,t))_{x}$. 

Assuming that $\int_{\mathbb{R}}|q(x,t)-Ae^{2iA^2t}|\,dx<\infty$ for all $t\geq0$, introduce the $2\times2$ matrix valued functions $\Psi_j(x,t,k)$, $j=1,2$ as the solutions of the following linear Volterra integral equations
\begin{align}\label{fsPsi}
\nonumber
\Psi_j(x,t,k)=&e^{iA^2t\sigma_3}\mathcal{E}(k)\\
&+\int\limits_{(-1)^j\infty}^{x}G(x,y,t,k)(U(y,t)-U_0(t))\Psi_j(y,t,k)e^{i(x-y)f(k)\sigma_3}\,dy,j=1,2,
\end{align}
where $U_{0}(t)$ is the limits of $U(x,t)$ as $x\to\pm\infty$:
\begin{equation}
U(x,t)\to U_{0}(t),\quad x\to\pm\infty;\quad
U_0(t)=
\begin{pmatrix}
0& Ae^{2iA^2t}\\
-Ae^{-2iA^2t} & 0
\end{pmatrix}.
\end{equation}
The kernel $G(x,y,t,k)$ is defined as follows:
\begin{equation}
G(x,y,t,k)=e^{iA^2t\sigma_3}\mathcal{E}(k)
e^{-i(x-y)f(k)\sigma_3}\mathcal{E}^{-1}(k)
e^{-iA^2t\sigma_3},
\end{equation}
where
\begin{equation}\label{fsK}
\mathcal{E}(k)=\frac{1}{2}
\begin{pmatrix}
w(k)+\frac{1}{w(k)} & w(k)-\frac{1}{w(k)}\\
w(k)-\frac{1}{w(k)} & w(k)+\frac{1}{w(k)}
\end{pmatrix},\quad w(k)=\left(\frac{k-iA}{k+iA}\right)^{\frac{1}{4}},
\end{equation}
and
\begin{equation}\label{fsf}
f(k)=(k^2+A^2)^{\frac{1}{2}}.
\end{equation}
Here the functions $f(k)$ and $w(k)$ are fixed to be analytic 
in $\in\mathbb{C}\setminus \overline{ B}$, 
where 
$$
B=(-iA,iA)\subset i\mathbb{R},
$$
 and to have the 
asymptotics
$$
f(k)=k+O(k^{-1}),\quad k\to\infty\quad\text{and}\quad
w(k)=1+O(k^{-1}),\quad k\to\infty.
$$
Particularly, we have $f(k)=\sqrt{k^2+A^2}$ for $k\in(0,\infty)$ and 
$f(k)=-\sqrt{k^2+A^2}$ for $k\in(-\infty,0)$. 

In what follows, $f_\pm(k)$ and $w_\pm(k)$ denote the limiting values 
of the corresponding function
as $k$ approaches $B$ (oriented upward from $-iA$ to $iA$)
from the left/right (and similarly for $\mathcal{E}_\pm (k)$).
Notice that in spite of the jumps of $f$ and $\cal E$ 
across $B$, $G(x,y,t,k)$ is entire w.r.t. $k$ for all $x$, $y$, and $t$.

Since $f(k)$ is real for $k\in\mathbb{R}$
and $f_\pm(k)$ are real for $k\in B$, the integral equations (\ref{fsPsi}) 
are well-defined for  $k\in\mathbb{R}\cup B$.

The columns of the matrices $\Psi_j(x,t,k)$, $j=1,2$ play a crucial role in the construction of the basic Riemann-Hilbert problem:  a sectionally holomorphic matrix function can be defined in terms of the corresponding columns of $\Psi_j(x,t,k)$, $j=1,2$ (see (\ref{fsM}) below). In the next proposition we summarize the main properties of $\Psi_j(x,t,k)$, $j=1,2$ (we use the following notations: $Q^{[j]}$ stands for the $j$-th column of a matrix $Q$; $\mathbb{C}^{\pm}=\left\{k\in\mathbb{C}\,|\pm\Im k>0\right\}$;
$\overline{\mathbb{C}^{\pm}}=\left\{k\in\mathbb{C}\,|\pm\Im k\ge 0\right\}$):
\begin{proposition}
\label{fsproppsi1}
\begin{enumerate}[(i)]
\item The matrices $\Psi_{j\pm}(x,t,k)$, $j=1,2$ are well defined for $k\in B$
as the solutions of the integral equations (cf. (\ref{fsPsi}))
\[
\Psi_{j\pm}(x,t,k)=e^{iA^2t\sigma_3}\mathcal{E}_\pm(k)
+\int\limits_{(-1)^j\infty}^{x}G(x,y,t,k)(U(y,t)-U_0(t))\Psi_{j\pm}(y,t,k)
e^{i(x-y)f_\pm(k)\sigma_3}\,dy.
\]

\item The columns $\Psi_1^{[1]}(x,t,k)$ and $\Psi_2^{[2]}(x,t,k)$ are well-defined for $k\in\overline{\mathbb{C}^+}\setminus[0, iA]$, analytic for $k\in\mathbb{C}^+\setminus(0,iA]$ and continuous for $k\in\overline{\mathbb{C}^+}\setminus[0,iA]$; moreover,
\[
\Psi_1^{[1]}(x,t,k)=e^{iA^2t}
\begin{pmatrix}
1\\
0\end{pmatrix}
+O(k^{-1}),\quad \Psi_2^{[2]}(x,t,k)=
e^{-iA^2t}
\begin{pmatrix}
0\\
1\end{pmatrix}
+O(k^{-1}),\quad  
k\rightarrow\infty, \quad  k\in\mathbb{C}^+,
\]
and $\Psi_1^{[1]}(x,t,k),\Psi_2^{[2]}(x,t,k)=
O\left((k\pm iA)^{-\frac{1}{4}}\right)$ as $k\to\mp iA$.
\item The columns $\Psi_1^{[2]}(x,t,k)$ and $\Psi_2^{[1]}(x,t,k)$ are well-defined for $k\in\overline{\mathbb{C}^-}\setminus[-iA,0]$, analytic for $k\in\mathbb{C}^-\setminus[-iA,0)$ and continuous for $k\in\overline{\mathbb{C}^-}\setminus[-iA,0]$; moreover,
\[
\Psi_1^{[2]}(x,t,k)=
e^{-iA^2t}
\begin{pmatrix}
0\\
1\end{pmatrix}
+O(k^{-1}),\quad 
\Psi_2^{[1]}(x,t,k)=
e^{iA^2t}
\begin{pmatrix}
1\\
0\end{pmatrix}
+O(k^{-1}),\quad  k\rightarrow\infty,\quad k\in\mathbb{C}^-,
\]
and $\Psi_1^{[2]}(x,t,k),\Psi_2^{[1]}(x,t,k)=O\left((k\pm iA)^{-\frac{1}{4}}\right)$ as $k\to \mp iA$.
\item The functions $\Phi_j(x,t,k)$, $j=1,2$ defined by
\begin{subequations}\label{fsjost}
\begin{align}
&\Phi_j(x,t,k)=\Psi_j(x,t,k)e^{-(ix+2itk)f(k)\sigma_3},&& k\in\mathbb{R}\setminus\{0\},\quad j=1,2,\\
&\Phi_{j\pm}(x,t,k)=\Psi_{j\pm}(x,t,k)
e^{-(ix+2itk)f_\pm(k)\sigma_3},&&k\in B,\quad j=1,2,
\end{align}
\end{subequations}
are the  (Jost) solutions of the Lax pair equations (\ref{fsLP}) satisfying 
the boundary conditions
\begin{subequations}
\begin{align}
&\Phi_j(x,t,k)\rightarrow\Phi_0(x,t,k),&& x\rightarrow\pm\infty,\quad j=1,2,\quad
k\in\mathbb{R}\setminus\{0\},\\
&\Phi_{j\pm}(x,t,k)\rightarrow\Phi_{0\pm}(x,t,k),&& x\rightarrow\pm\infty,\quad j=1,2,\quad
k\in B,
\end{align}
\end{subequations}
where $\Phi_0(x,t,k)=e^{iA^2t\sigma_3}
\mathcal{E}(k)e^{-(ix+2itk)f(k)\sigma_3}$ and 
$\Phi_{0\pm}(x,t,k)=e^{iA^2t\sigma_3}
\mathcal{E}_{\pm}(k)e^{-(ix+2itk)f_\pm(k)\sigma_3}$.
		
\item $\det\Psi_j(x,t,k)\equiv 1,\quad k\in\mathbb{R} \quad$ and $\quad\det\Psi_{j\pm}(x,t,k)\equiv 1,\quad k\in B, \quad j=1,2$.

\item The  following symmetry relations hold:
\begin{subequations}\label{fssymmpsi}
\begin{equation}
\label{fssymmR}
\begin{aligned}
&\sigma_1\overline{\Psi_1^{[1]}(-x,t,-\bar{k})}=
\Psi_2^{[2]}(x,t,k), 
\quad k\in\overline{\mathbb{C}^+}\setminus[0,iA],\\
&\sigma_1\overline{\Psi_1^{[2]}(-x,t,-\bar{k})}=
\Psi_2^{[1]}(x,t,k), 
\quad k\in\overline{\mathbb{C}^-}\setminus[-iA,0],
\end{aligned}
\end{equation}
and 
\begin{equation}
\label{fssymmseg}
\Psi_{j+}(x,t,k)=i\Psi_{j-}(x,t,k)\sigma_1,\quad k\in B,\quad j=1,2,
\end{equation}
\end{subequations}
where $\sigma_1=\bigl(\begin{smallmatrix}0& 1\\1 & 0\end{smallmatrix}\bigl)$ is the first Pauli matrix.
\end{enumerate}
\end{proposition}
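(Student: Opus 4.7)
The plan is to treat items (i)--(iii) via the standard Volterra theory applied to the integral equations (\ref{fsPsi}), then derive (iv)--(vi) as consequences together with the symmetries of the potential $U(x,t)$ inherited from the nonlocal reduction $r(x,t)=\bar q(-x,t)$.

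For items (i)--(iii), I would first conjugate the integral equation by $e^{i(x-y)f(k)\sigma_3}$ so that the kernel becomes a manifestly integrable operator acting on the columns separately. The factor that controls analyticity is $e^{\pm 2i(x-y)f(k)}$ appearing in the $(1,2)$ and $(2,1)$ entries of $G(x,y,t,k)\cdot e^{i(x-y)f(k)\sigma_3}$. For $\Psi_1^{[1]}$ one integrates over $y<x$, so this exponential is bounded precisely where $\Im f(k)\ge 0$; since $f(k)=\sqrt{k^2+A^2}$ is fixed to be analytic in $\mathbb{C}\setminus\overline{B}$ with $f(k)\sim k$ at infinity, this region is $\mathbb{C}^+\setminus(0,iA]$. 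The analogous argument for $\Psi_2^{[2]}$, $\Psi_1^{[2]}$, $\Psi_2^{[1]}$ yields the four stated analyticity domains. The standard Neumann series under the hypothesis $q(\cdot,t)-Ae^{2iA^2t}\in L^1(\mathbb{R})$ converges and gives continuity up to the boundary. For the large-$k$ asymptotics, I would expand $\mathcal{E}(k)=I+O(k^{-1})$ and iterate once in the Volterra series to extract the leading behavior $\Psi_1^{[1]}\sim e^{iA^2t}(1,0)^T$ and its counterparts. The fractional singularity $(k\mp iA)^{-1/4}$ at the endpoints of $B$ is read off directly from the explicit form (\ref{fsK}) of $\mathcal{E}(k)$, since the boundary term in the integral equation carries this singularity and the Volterra correction is $O(1)$ there. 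Item (i) follows because on $B$ the values $f_\pm(k)$ are real, so the oscillatory kernel is bounded and the integral equation with boundary data $\mathcal{E}_\pm(k)$ is solvable in the same fashion.

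For (iv), I would differentiate the integral equation in $x$ and verify that the resulting identity is equivalent, after undoing the conjugation in (\ref{fsjost}), to the first Lax equation $\Phi_x+ik\sigma_3\Phi=U\Phi$; the $t$-equation then follows on the set where $\Psi_j$ is defined from the compatibility condition satisfied by $U$ and $V$, combined with the $t$-asymptotics built into the prefactor $e^{iA^2t\sigma_3}$. The boundary conditions as $x\to\pm\infty$ are immediate from the Volterra structure: the integral term vanishes in the limit $x\to(-1)^j\infty$, leaving the background solution $\Phi_0$ (respectively $\Phi_{0\pm}$).

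For (v), I would apply the standard Abel/Liouville argument: since $U(x,t)-ik\sigma_3$ is trace-free, $\det\Psi_j$ is independent of $x$, so it equals its limit as $x\to(-1)^j\infty$, which is $\det(e^{iA^2t\sigma_3}\mathcal{E}(k))=\det\mathcal{E}(k)$; a direct computation from (\ref{fsK}) gives $\det\mathcal{E}(k)\equiv 1$, and the same reasoning applies to $\Psi_{j\pm}$ with $\mathcal{E}_\pm$. For (vi), the key observation is the potential symmetry $\sigma_1\overline{U(-x,t)}\sigma_1=-U(x,t)$, which follows from the nonlocal reduction and a short matrix calculation; combined with $\sigma_1\overline{\mathcal{E}(-\bar k)}\sigma_1=\mathcal{E}(k)\sigma_1$ (check directly from (\ref{fsK}), noting that $k\mapsto -\bar k$ sends the branch cut to itself and $\overline{w(-\bar k)}=1/w(k)$ up to a sign fixed by our branch convention), the integral equation for $\Psi_1$ at $(-x,t,-\bar k)$ transforms into the integral equation for $\Psi_2$ at $(x,t,k)$, yielding (\ref{fssymmR}) by uniqueness of Volterra solutions. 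For (\ref{fssymmseg}) on $B$, I would use that $f_+(k)=-f_-(k)$ and $\mathcal{E}_+(k)=\mathcal{E}_-(k)\sigma_1$ (up to the factor $i$ fixed by the fourth-root branch in $w$), plug these into the $\pm$ integral equations of item (i), and read off the claimed relation.

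The main obstacle I anticipate is bookkeeping the branch structure of $w(k)$ and $f(k)$ across $B$: the fractional power in (\ref{fsK}) makes the $\pm$ limits of $\mathcal{E}$ differ by a precise factor (entering (\ref{fssymmseg}) as $i\sigma_1$) whose sign depends on the chosen branch, and the symmetry $k\mapsto -\bar k$ interacts nontrivially with the cut on $i\mathbb{R}$, so the signs in (\ref{fssymmR}) at the endpoints and across the $(0,iA]$ and $[-iA,0)$ arcs must be checked with care. Everything else is routine Volterra and Abel-type reasoning.
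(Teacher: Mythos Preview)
Your proposal is correct and follows essentially the same route as the paper's (very terse) proof: items (i)--(iv) from the Volterra structure of (\ref{fsPsi}), item (v) from tracelessness of $U$ together with $\det\mathcal{E}(k)=1$, and item (vi) from the potential symmetry $\sigma_1\overline{U(-x,t)}\sigma_1^{-1}=-U(x,t)$ and the $\pm$ relation $\mathcal{E}_+(k)=i\mathcal{E}_-(k)\sigma_1$.

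The one place where the paper's formulation is slightly slicker is exactly the point you flag as your main obstacle. For (\ref{fssymmR}) the paper does not track the symmetry of $\mathcal{E}(k)$ under $k\mapsto -\bar k$ across the branch cut; instead it records the kernel symmetry
\[
\sigma_1\,\overline{G(-x,-y,t,-\bar k)}\,\sigma_1^{-1}=G(x,y,t,k),\qquad k\in\mathbb{C},
\]
which is clean precisely because $G$ is entire in $k$ (the jumps of $f$ and $\mathcal{E}$ across $B$ cancel in $G$, as noted just before Proposition~\ref{fsproppsi1}). Working at the level of $G$ therefore bypasses the branch bookkeeping for $w$ and $f$ that you would otherwise have to do; your approach via $\mathcal{E}$ is equivalent but, as you anticipated, more delicate.
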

\begin{proof}
Items (i)--(iv) follow directly from the integral representation (\ref{fsPsi}).  Since the matrix $U(x,t)$ is traceless  and 
$\det \mathcal{E}(k)=1$,  Item (v) follows. 
Finally, (\ref{fssymmR}) in Item (vi) follows from the symmetries 
\begin{equation}
\sigma_1 \overline{U}(-x,t)\sigma_1^{-1}=-U(x,t),\quad \text{and}\quad 
\sigma_1 \overline{G(-x,-y,t,-\bar{k})}\sigma_1^{-1}=G(x,y,t,k),\, k\in\mathbb{C}, 
\end{equation}
whereas (\ref{fssymmseg}) is a consequence of the symmetry
\begin{equation}\label{fssymK}
\mathcal{E}_+(k)=i\mathcal{E}_-(k)\sigma_1,\quad k\in B.
\end{equation}
\end{proof}

\subsection{Spectral functions}\label{fssectspfunct}
The Jost solutions $\Phi_j(x,t,k)$, $j=1,2$ of the Lax pair (\ref{fsLP}) are related by a matrix independent of $x$ and $t$, which allows us to introduce 
the  scattering matrices $S(k)$  and $S_\pm(k)$ as follows:
\begin{equation}\label{fssr}
\Phi_1(x,t,k)=\Phi_2(x,t,k)S(k),\quad k\in\mathbb{R}\setminus\{0\};
\quad
\Phi_{1\pm}(x,t,k)=\Phi_{2\pm}(x,t,k)S_\pm(k),\quad  k\in B.
\end{equation}
In terms of $\Psi_j(x,t,k)$, $j=1,2$,
the scattering relations (\ref{fssr}) read  as follows (see (\ref{fsjost})):
\begin{subequations}\label{fssrpsi}
\begin{align}
\label{fssrpsia}
&\Psi_1(x,t,k)=\Psi_2(x,t,k)e^{-(ix+2itk)f(k)\sigma_3}S(k)
e^{(ix+2itk)f(k)\sigma_3},&& k\in\mathbb{R}\setminus\{0\},\\
&
\label{fssrpsib} \Psi_{1\pm}(x,t,k)=\Psi_{2\pm}(x,t,k)e^{-(ix+2itk)f_\pm(k)\sigma_3}S_\pm(k)
e^{(ix+2itk)f_\pm(k)\sigma_3},&& k\in B.
\end{align}
\end{subequations}

Let us denote the entries of $S(k)$ by
\begin{equation}\label{fsscatt}
S(k)=\begin{pmatrix}
a_1(k) & -b_2(k)\\
b_1(k) & a_2(k)
\end{pmatrix},\quad k\in\mathbb{R}\setminus\{0\};\quad
S_\pm(k)=\begin{pmatrix}
a_{1\pm}(k) & -b_{2\pm}(k)\\
b_{1\pm}(k) & a_{2\pm}(k)
\end{pmatrix},\quad k\in B.
\end{equation}
Taking into account the analytical properties of the columns of matrices $\Psi_j$, $j=1,2$ (see Items (ii), (iii) in Proposition \ref{fsproppsi1}), the function $a_1(k)$ is analytic in $\mathbb{C}^{+}\setminus(0, iA]$ and $a_2(k)$ is analytic in $\mathbb{C}^{-}\setminus[-iA, 0)$ whereas $b_j(k)$, $j=1,2$ are well-defined for $k\in\mathbb{R}\setminus\{0\}$. Indeed, (\ref{fssrpsia}) implies
\begin{subequations}
\begin{align}
&a_1(k)=\det\left(\Psi_1^{[1]}(0,0,k),\Psi_2^{[2]}(0,0,k)\right),\quad
k\in\overline{\mathbb{C}^{+}}\setminus[0, iA],\\
&a_2(k)=\det\left(\Psi_2^{[1]}(0,0,k),\Psi_1^{[2]}(0,0,k)\right),\quad
k\in\overline{\mathbb{C}^{-}}\setminus[-iA, 0],\\
&b_1(k)=\det\left(\Psi_2^{[1]}(0,0,k),\Psi_1^{[1]}(0,0,k)\right),\quad
k\in\mathbb{R}\setminus\{0\},\\
&b_2(k)=\det\left(\Psi_2^{[2]}(0,0,k),\Psi_1^{[2]}(0,0,k)\right),\quad
k\in\mathbb{R}\setminus\{0\}.
\end{align}
\end{subequations}
Moreover, $a_j(k)$, $b_j(k)$, $j=1,2$ have the following asymptotics as $k\to\infty$:
\begin{subequations}
\begin{align}
&a_1(k)=1+O(k^{-1}),&k&\to\infty,\quad k\in \overline{\mathbb{C}^{+}},\\
&a_2(k)=1+O(k^{-1}),&k&\to\infty,\quad k\in \overline{\mathbb{C}^{-}},\\
&b_j(k)=O(k^{-1}),&k&\to\infty,\quad k\in\mathbb{R}.
\end{align}
\end{subequations}

For $k\in B$, we define 
 $a_{j\pm}(k)$ and $b_{j\pm}(k)$, $j=1,2$  with the aid of the similar determinant representations:
\begin{subequations}
	\begin{align}
	&a_{1\pm}(k)=\det\left(\Psi_{1\pm}^{[1]}(0,0,k),\Psi_{2\pm}^{[2]}(0,0,k)\right),
	\quad k\in B,\\
	&a_{2\pm}(k)=\det\left(\Psi_{2\pm}^{[1]}(0,0,k),\Psi_{1\pm}^{[2]}(0,0,k)\right),
	\quad k\in B,\\
	&b_{1\pm}(k)=\det\left(\Psi_{2\pm}^{[1]}(0,0,k),\Psi_{1\pm}^{[1]}(0,0,k)\right),
	\quad k\in B,\\
	&b_{2\pm}(k)=\det\left(\Psi_{2\pm}^{[2]}(0,0,k),\Psi_{1\pm}^{[2]}(0,0,k)\right),
	\quad k\in B.
	\end{align}
\end{subequations}
In this way, $a_{1\pm}(k)$ turn to be the limiting values of $a_1(k)$ 
from the left/right for $k\in (0, iA)$ and 
$a_{2\pm}(k)$ are the limiting values of $a_2(k)$ 
from the left/right for $k\in (-iA,0)$.

The symmetries (\ref{fssymmpsi}) imply the following symmetries of the spectral functions (cf. \cite{ALM18}):
\begin{subequations}\label{fsabsym}
\begin{align}
\label{fsajsym}
&\overline{a_1(-\bar{k})}=a_1(k),\quad k\in \overline{\mathbb{C}^{+}}\setminus[0, iA],\quad
\overline{a_2(-\bar{k})}=a_2(k),\quad k\in \overline{\mathbb{C}^{-}}\setminus[-iA, 0],\\
\label{fsbjsymm}
& b_2(k)=\overline{b_1(-k)},\quad k\in\mathbb{R}\setminus\{0\},
\end{align}
\end{subequations}
and
\begin{equation}
\label{fsajsymB}
a_{1\pm}(k)=a_{2\mp}(k),
\quad b_{1\pm}(k)=-b_{2\mp}(k),\quad k\in B.
\end{equation}
Notice that (\ref{fsajsym}) implies $\overline{a_{j+}(0)}=a_{j-}(0)$, $j=1,2$;   combining this with (\ref{fsajsymB}) we arrive at the equality 
$a_{1\pm}(0)=a_{2\pm}(0)$.

From Item (v) of Proposition \ref{fsproppsi1}, (\ref{fsjost}) and (\ref{fssr})
it follows that 
$a_j(k)$ and $b_j(k)$, $j=1,2$ 
satisfy the determinant relations:
\begin{equation}
a_1(k)a_2(k)+b_1(k)b_2(k)=1,\ \  k\in\mathbb{R}\setminus\{0\};\quad
a_{1\pm}(k)a_{2\pm}(k)+b_{1\pm}(k)b_{2\pm}(k)=1,
\ \  k\in B.
\end{equation}

Notice that due to the Schwarz symmetry breaking for the solutions $\Psi_j(x,t,k)$, $j=1,2$, see (\ref{fssymmR}),  the values of $a_1(k)$ for $k\in\mathbb{C}^+\setminus(0,iA]$ and $a_2(k)$ for $k\in\mathbb{C}^-\setminus(0,-iA]$ are, in general, \textit{not} 
related.

Finally, we point out that $a_j(k),\,b_j(k)=O\left((k\pm iA)^{-\frac{1}{2}}\right)$ as $k\to\mp iA$.

\underline{\textbf{Notations}}
From now on we identify the values of $f(k)$, $w(k)$, $a_j(k)$ and $b_j(k)$, $j=1,2$ for  $k\in B$ as the limiting values of the corresponding function from the right:
\begin{equation}\label{fsfj}
f(k):=f_-(k)=\sqrt{k^2+A^2},\quad k\in B,\quad
w(k):=w_-(k) ,\quad k\in B.
\end{equation}
and
\begin{equation}\label{fsajB}
a_j(k):=a_{j-}(k),\quad j=1,2,\quad k\in B,\quad
b_j(k):=b_{j-}(k),\quad j=1,2,\quad k\in B.
\end{equation}

\underline{\textbf{Assumption}}
\textit{(Zeros of the spectral functions $a_j(k), j=1,2$).}
We assume that $a_1(k)$ and $a_2(k)$ do not have zeros in $\overline{\mathbb{C}^+}$ and $\overline{\mathbb{C}^-}$ respectively (see, however, Remark \ref{remz} below).
Particularly, there are no spectral singularities (in other words, no zeros on $\mathbb{R}\cup \overline{B}$), particularly the virtual levels are absent (i.e., $a_1(iA)\neq0$ and $a_2(-iA)\neq0$).

\begin{remark}
	Above Assumption is motivated by the fact that the pure background (i.e., $q(x,0)\equiv A$) gives rise to the spectral functions $a_j(k)\equiv 1$ and $b_j(k)\equiv0$, $j=1,2$.
	Indeed, for such initial data $\Psi_j(0,0,k)=\mathcal{E}(k)$, $j=1,2$, therefore (\ref{fssrpsi}) implies that $S(k)=\mathcal{E}^{-1}(k)\mathcal{E}(k)=I$.
\end{remark}
\subsection{Riemann-Hilbert problem}

Now we are in a position to formulate the basic Riemann-Hilbert problem. Assuming that $a_j(k)$, $j=1,2$ have no zeros in the corresponding half-planes,  define a $2\times 2$ matrix function $M(x,t,k)$ as follows:
\begin{equation}
\label{fsM}
M(x,t,k)=
\left\{
\begin{array}{lcl}
e^{-iA^2t\sigma_3}
\left(\frac{\Psi_1^{[1]}(x,t,k)}{a_{1}(k)},\Psi_2^{[2]}(x,t,k)\right),\quad k\in\mathbb{C}^+\setminus(0,iA],\\
e^{-iA^2t\sigma_3}
\left(\Psi_2^{[1]}(x,t,k),\frac{\Psi_1^{[2]}(x,t,k)}{a_{2}(k)}\right),\quad k\in\mathbb{C}^-\setminus[-iA, 0).\\
\end{array}
\right.
\end{equation}
Then the scattering relation (\ref{fssrpsi}) imply that $M(x,t,k)$ satisfies the  multiplicative jump condition across the contour $\mathbb{R}\cup B$:
\begin{equation}\label{fsj}
M_+(x,t,k)=M_-(x,t,k)J(x,t,k),\quad k\in \mathbb{R}\cup B,
\end{equation}
where $M_{\pm}(\cdot,\cdot,k)$ denotes the nontangental limits of $M(\cdot,\cdot,k)$ as $k$ approaches the contour from the corresponding side (the real axis $\mathbb{R}$ is oriented from left to right and the segment $B$ is oriented upwards).
The jump matrix $J(x,t,k)$ has the form (follows from the scattering relations (\ref{fssrpsi}) and the symmetry condition (\ref{fssymmseg})):
\begin{equation}\label{fsjbrh}
J(x,t,k)=
\begin{cases}
\begin{pmatrix}
1+r_1(k)r_2(k) & r_2(k)e^{-(2ix+4itk)f(k)}\\
r_1(k)e^{(2ix+4itk)f(k)} & 1
\end{pmatrix}, &k\in\mathbb{R}\setminus\{0\},
\\
\begin{pmatrix}
-ir_2(k)e^{-(2ix+4itk)f(k)} & i\\
i(1+r_1(k)r_2(k)) & -ir_1(k)e^{(2ix+4itk)f(k)}
\end{pmatrix},& k\in(0,iA),
\\
\begin{pmatrix}
ir_2(k)e^{-(2ix+4itk)f(k)} & i(1+r_1(k)r_2(k))\\
i& ir_1(k)e^{(2ix+4itk)f(k)}
\end{pmatrix},& k\in(-iA,0),
\end{cases}
\end{equation}
with the reflection coefficients $r_j(k)=\frac{b_j(k)}{a_j(k)}$, $j=1,2$ (recall that the values of $r_j(k)$, $j=1,2$ and $f(k)$ are taken from the ``$-$'' (right) side of the segment $B$, see (\ref{fsfj}) and (\ref{fsajB})). Notice that $r_j(k)$, $j=1,2$ are bounded as $k$ approaches $\pm iA$:
\begin{equation}
r_j(k)=O(1),\quad k\to\pm iA.
\end{equation}
The matrix $M(x,t,k)$ satisfies the normalization condition
\begin{equation}\label{fsnorm}
M(x,t,k)=I+O(k^{-1}),\quad k\to\infty.
\end{equation}
Besides, since $k=\pm iA$ are not virtual levels (see Assumption in Section \ref{fssectspfunct} and Remark \ref{remz}), the possible singularities of 
$M(x,t,k)$ at $k=\pm iA$ are  square integrable:
\begin{equation}\label{fssing}
M(x,t,k)=O\left((k\pm iA)^{-\frac{1}{4}}\right),\quad k\to\mp iA.
\end{equation}

The basic Riemann-Hilbert problem consists in finding a sectionally holomorphic $2\times 2$ matrix $M(x,t,k)$, which satisfies (i) the multiplicative jump conditions (\ref{fsj}) given in terms of known reflection coefficients $r_j(k)$, $j=1,2$ and (ii) the normalization at infinity (\ref{fsnorm});
moreover,  (iii) it is square integrable (satisfying (\ref{fssing})) at the endpoints $k=\pm iA$.

Assuming that the solution $M(x,t,k)$ of the basic Riemann-Hilbert problem (\ref{fsj}), (\ref{fsnorm}), (\ref{fssing}) exists, the solution $q(x,t)$ of the original initial value problem (\ref{fsivp}) can be found via the $(12)$ or $(21)$ entry of $M(x,t,k)$ as $k\to\infty$ (follows from the first equation in the Lax pair (\ref{fsLP})):
\begin{subequations}\label{fsasol}
\begin{equation}\label{fssol}
q(x,t)=2ie^{2iA^2t}
\lim_{k\to\infty}kM_{12}(x,t,k),
\end{equation}
and
\begin{equation}\label{fssol1}
q(-x,t)=-2ie^{2iA^2t}
\lim_{k\to\infty}k\overline{M_{21}(x,t,k)}.
\end{equation}
\end{subequations}

Notice that solution of the basic Riemann-Hilbert is unique, if it exists. Indeed, let $M$ and $\tilde M$ be
two solutions. Then, due to (\ref{fssing}), the possible singularities of 
$M \tilde M^{-1}$  at the endpoints $k=\pm iA$ are weak, and $M \tilde M^{-1}$ has no jump in $\mathbb{C}$. Then, by the Liouville theorem,  $M \tilde M^{-1}= I$ 
for all $k\in\mathbb{C}$.

\begin{remark}\label{fsxgz}
From  (\ref{fsasol}) it follows that for obtaining the solution $q(x,t)$ of (\ref{fsivp}) for all $x\in \mathbb R$, it is enough to have the solution of the Riemann-Hilbert problem for $x\ge 0$ only.
\end{remark}
\begin{remark}
	\label{remz}
	Generically, $a_1(k)$ and $a_2(k)$ can have finite number of simple zeros in $\overline{\mathbb{C}^+}\setminus(0,iA]$ and $\overline{\mathbb{C}^-}\setminus(0,-iA]$ respectively (notice that since $a_1(k)$ and $a_2(k)$ are not related, their zeroes do not, in general, constitute pairs associated with solitons, as it takes place for the  classical NLS equation \cite{BLM21}).
	In this case the basic Riemann-Hilbert problem has to be supplemented by  appropriate residue conditions defined in terms of the (known) zeros and norming constants.
On the other hand,  spectral singularities may arise for non-generic initial data
(similarly to 
 the NLS equation with zero background \cite{Zh89-sing}); particularly, virtual levels (cf. Appendix B in \cite{BK14}) 
	introduce non-square-integrable singularities 
	at the endpoints $k=\pm iA$ 
	in the basic RH problem for $M(x,t,k)$. 
	To deal with singularities of all types in a systematic way, one can reformulate the basic RH problem in such a way that all singularities are replaced by additional jump conditions on an auxiliary part of the contour, which is a circle centered at $k=0$ with sufficiently large radius 
	$r=r(q_0(\cdot))$ (see \cite{BM19, BS04, Zh98}).
	In this way, the original RH problem becomes regular;
	 however, its direct asymptotic analysis (without resorting to residue conditions) is problematic.
\end{remark}

\begin{remark}
	Since $a_1(k)$ and $a_2(k)$ are not related, the spectral picture for the nonlocal problem is more rich and variegated comparing with the local case.
	First, the number of zeros in the upper and in the lower half planes can be different (in view of the symmetry relations (\ref{fsajsym}), zeros are symmetric w.r.t. $i\mathbb{R}$, but not w.r.t. $\mathbb{R}$ as for the classical NLS equation).
	Moreover, even if zeros of $a_1(k)$ and $a_2(k)$ constitute appropriate pairs, solitons associated to these zeros can blow up in finite time (see (4.62) in \cite{ALM18}).
	Cases with equal number of purely imaginary zeros in $\mathbb{C}^{+}$ and $\mathbb{C}^{-}$ with the absolute values which are greater or equal to $A$ give rise to three different types of rogue waves 
	given in Theorems 1-3 in \cite{YY}.
	Notice that apart from the case when the absolute values of these zeros are less than $A$,  it is also possible
	that, say, $a_1(k)$ has a (simple) zero at $k=ih$, $0<h<A$ and $a_2(k)$ has a zero at $k=-i\tilde{h}$, $\tilde{h}=A$ or $\tilde{h}>A$. Consequently, there can be 
	a ``mixture'' of spectral portraits, which in the local case correspond to the Kuznetsov-Ma (zeros at $k=\pm ih$, $h>A$), Peregrine (zeros at $k=\pm iA$) or Akhmediev (zeros at $k=\pm ih$, $0<h<A$) breathers.
\end{remark}
\section{Long-time asymptotics}

The implementation of the Deift--Zhou nonlinear steepest descent method is guided (by analogy with the classical steepest decent method) by the signature structure of the phase function in the jump matrix $J(x,t,k)$ given by (\ref{fsjbrh}), which is similar to that  in the case of the NLS equation 
with nonzero background \cite{BM17}. 

Introduce the phase function $\theta(k,\xi)$ by (cf. (2.34) in \cite{BM17})
\begin{equation}\label{fstheta}
2ixf(k)+4itkf(k)=2it(4\xi+2k)f(k)\equiv 2it\theta(k,\xi),
\end{equation}
where  $\xi=\frac{x}{4t}$.
The signature table of $\Im\theta(k,\xi)$ has qualitatively different form for $\xi>\sqrt{2}A$, $\xi=\sqrt{2}A$, $\sqrt{2}A>\xi>0$, and $\xi=0$, see Figure \ref{sign_pw} and Figure \ref{sign_ew} (cf. Figure 3.2 in \cite{BM17}). 
For $\xi\ge\sqrt{2}A$, the stationary phase points (i.e., the points $k_s\in\mathbb{C}$ such that $\theta_k^\prime(k_s,\xi)=0$)  are real and have the values
\begin{equation}
k_{1}=\frac{1}{2}\left(-\xi-\sqrt{\xi^2-2A^2}\right),\quad
k_{2}=\frac{1}{2}\left(-\xi+\sqrt{\xi^2-2A^2}\right),
\end{equation}
whereas for $\sqrt{2}A>\xi\ge 0$ the stationary phase points are complex conjugate:
\begin{equation}\label{fsstphc}
\tilde{k}_{1}=\frac{1}{2}\left(-\xi+i\sqrt{2A^2-\xi^2}\right),\quad
\tilde{k}_{2}=\frac{1}{2}\left(-\xi-i\sqrt{2A^2-\xi^2}\right).
\end{equation}
\begin{figure}[h]
	\begin{minipage}[h]{0.49\linewidth}
		\centering{\includegraphics[width=0.99\linewidth]{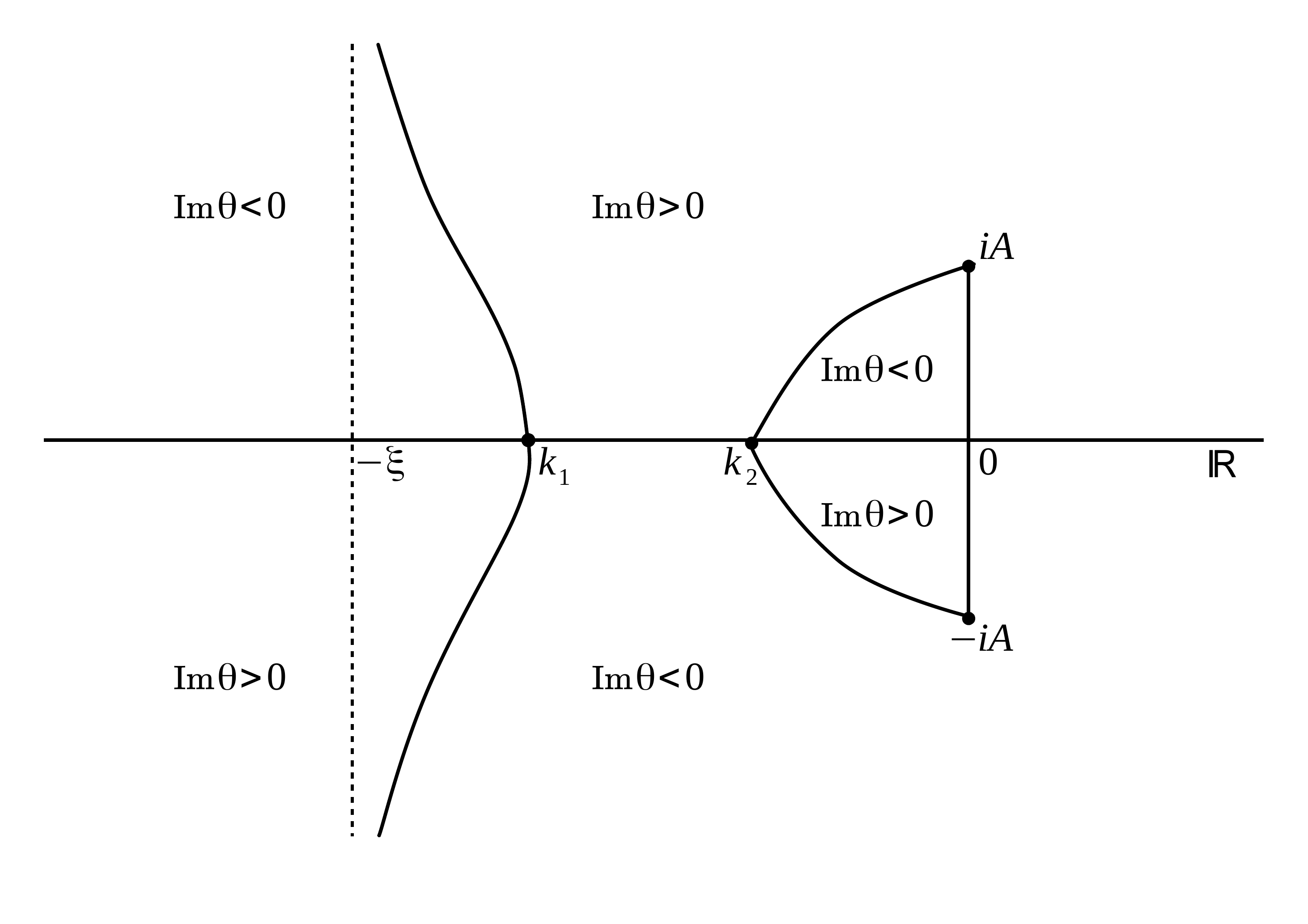}}
		\caption{Signature table of the phase function $\theta(k,\xi)$ in the plane wave region: $\xi>\sqrt{2}A$.}
		\label{sign_pw}
	\end{minipage}
	\hfill
	\begin{minipage}[h]{0.49\linewidth}
		\centering{\includegraphics[width=0.99\linewidth]{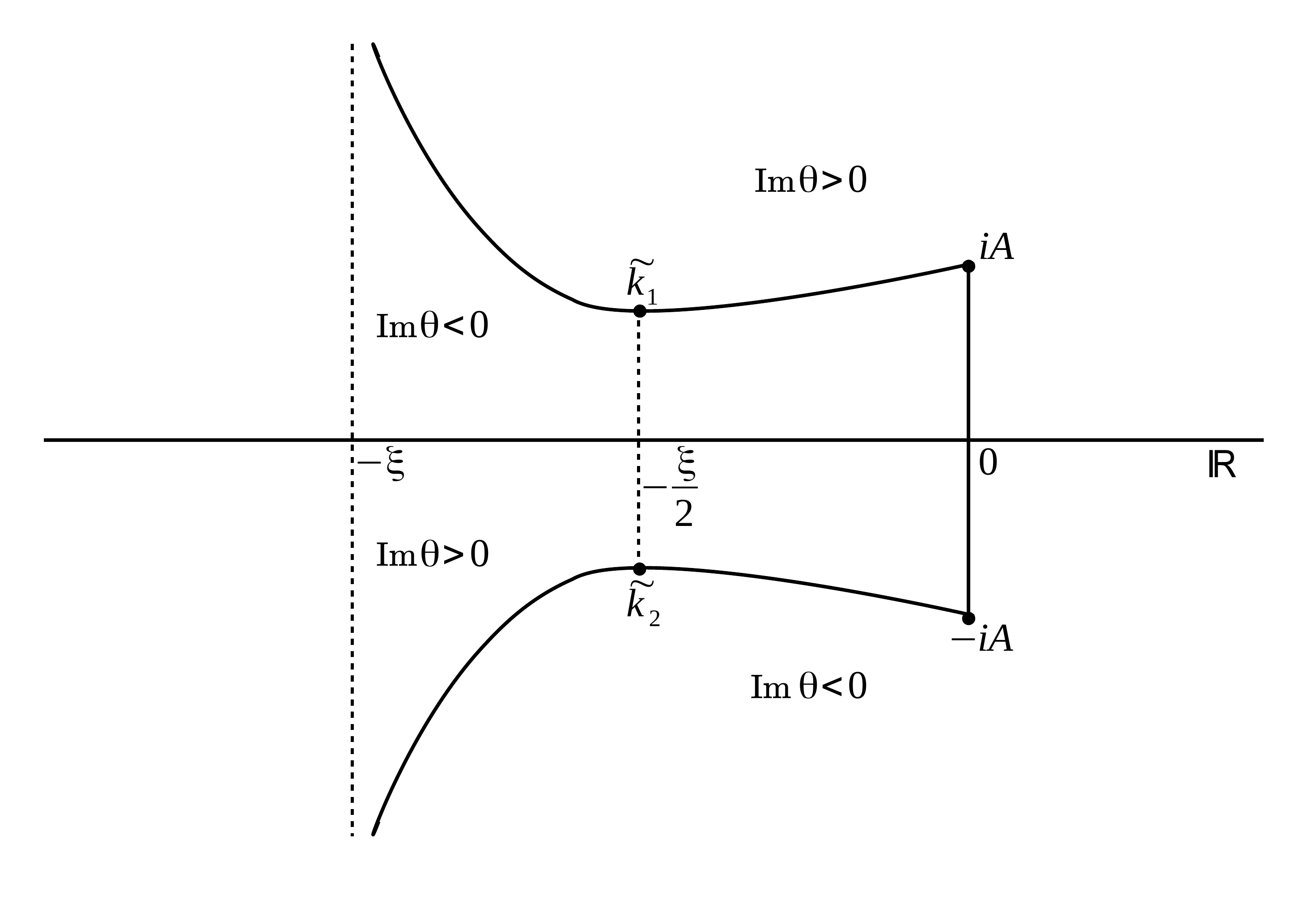}}
		\caption{Signature table of the phase function $\theta(k,\xi)$ in the elliptic wave region: $\sqrt{2}A>\xi>0$.}
		\label{sign_ew}
	\end{minipage}
\end{figure}

	In the present paper we consider the asymptotics of the solution $q(x,t)$ along the rays $\frac{x}{4t}=const$ for $|\frac{x}{4t}|>\sqrt{2}A$ (plane wave region) and for $0<|\frac{x}{4t}|<\sqrt{2}A$ (modulated elliptic wave region).
	The  transition zones (containing $x=0$ and $\frac{x}{4t}=\pm \sqrt{2}A$) will be addressed elsewhere.

\subsection{Plane wave regions: $|\frac{x}{4t}|>\sqrt{2}A$}
In the present Section we investigate the large-time behavior of the solution $q(x,t)$ along the rays $\frac{x}{4t}=const$ for $|\frac{x}{4t}|>\sqrt{2}A$. In view of (\ref{fsasol}) (see also Remark \ref{fsxgz}) it is enough to consider 
$\xi=\frac{x}{4t}>\sqrt{2}A$ only.

According to the signature table of $\theta(k,\xi)$, see Figure \ref{sign_pw}, two different triangular factorizations of the jump matrix $J(x,t,k)$ needed for $k\in\mathbb{R}$ (cf. \cite{BM17, DIZ, RSs}):
\begin{subequations}\label{fstr}
	\begin{equation}
	\label{fstr1}
	J(x,t,k)=
	\begin{pmatrix}
	1& 0\\
	\frac{r_1(k)e^{2it\theta}}{1+ r_1(k)r_2(k)}& 1\\
	\end{pmatrix}
	\begin{pmatrix}
	1+ r_1(k)r_2(k)& 0\\
	0& \frac{1}{1+ r_1(k)r_2(k)}\\
	\end{pmatrix}
	\begin{pmatrix}
	1& \frac{r_2(k)e^{-2it\theta}}{1+ r_1(k)r_2(k)}\\
	0& 1\\
	\end{pmatrix},\, k<k_1,
	\end{equation}
	and
	\begin{equation}
	\label{fstr2}
	J(x,t,k)=
	\begin{pmatrix}
	1& r_2(k)e^{-2it\theta}\\
	0& 1\\
	\end{pmatrix}
	\begin{pmatrix}
	1& 0\\
	r_1(k)e^{2it\theta}& 1\\
	\end{pmatrix},\, k>k_1.
	\end{equation}
\end{subequations}
In order to get rid of the diagonal factor in (\ref{fstr1}) we introduce the auxiliary function $\delta(k,k_1)$ as the solution of the following scalar Riemann-Hilbert problem:
\begin{equation}
\label{fsdrh}
\begin{aligned}
&\delta_+(k,k_1)=\delta_-(k,k_1)(1+r_1(k)r_2(k)),&& k\in(-\infty,k_1),\\
&\delta(k,k_1)\rightarrow 1, && k\rightarrow\infty.
\end{aligned}
\end{equation}
Though problem (\ref{fsdrh}) looks similar to that 
in the case of the local NLS equation, it has an important distinction.
Namely, the jump $(1+r_1(k)r_2(k))$ is not, in general, real valued for $k\in(-\infty,k_1)$ (as it holds for the local NLS equation due to the inherent symmetries of the spectral functions, see \cite{BK14, BM17}).
The nonzero imaginary part gives rise to the singularity of $\delta$ (or $\delta^{-1}$, depending on the sign) at the endpoint $k=k_1$.
Indeed, by using the Plemelj-Sokhotski formula we obtain the following integral representation for $\delta(k,k_1)$:
\begin{equation}\label{fsdelta-int}
\delta(k,k_1)=\exp\left\{\frac{1}{2\pi i}\int_{-\infty}^{k_1}
\frac{\ln(1+r_1(\zeta)r_2(\zeta))}{\zeta-k}\,d\zeta
\right\}.
\end{equation}
Integrating by parts one concludes that
\begin{equation}\label{fsdelta-singular}
\delta(k,k_1)=
(k-k_1)^{i\nu(k_1)}e^{\chi(k,k_1)},
\end{equation}
where
\begin{equation}\label{fschi}
\chi(k,k_1)=-\frac{1}{2\pi i}\int_{-\infty}^{k_1}\ln(k-\zeta)d_{\zeta}\ln(1+ r_1(\zeta)r_2(\zeta)),
\end{equation}
and 
\begin{equation}\label{fsnu}
\nu(k_1)=-\frac{1}{2\pi}\ln(1+r_1(k_1)r_2(k_1))
= -\frac{1}{2\pi}\ln|1+r_1(k_1)r_2(k_1)|-
\frac{i}{2\pi}\Delta(k_1),
\end{equation}
with 
\begin{equation}\label{fsDelta-arg}
\Delta(k_1)=\int_{-\infty}^{k_1}d\arg(1+ r_1(\zeta)r_2(\zeta)).
\end{equation}
Since $\Delta(k_1)$ is not, in general, equal to zero, the matrix function $\delta^{\sigma_3}(k,k_1)$ is not bounded at $k=k_1$.

In order to justify the asymptotics in the plane wave region (see Theorem \ref{fsth1pw} below) we need the following additional

\underline{\textbf{Assumption}} 
\textit{(Winding of the argument, plane wave region).}
We impose the following restriction on the winding of the argument of $(1+r_1(k)r_2(k))$ for $k<-\frac{A}{\sqrt{2}}$ in the plane wave region:
\begin{equation}\label{fsarg-ass}
\int_{-\infty}^{k}d\arg(1+ r_1(\zeta)r_2(\zeta))\in(-\pi,\pi),\quad \text{for all}\ \ k\in\left(-\infty,-\frac{A}{\sqrt{2}}\right).
\end{equation}
This implies that $|\Im \nu(k_1)|<\frac{1}{2}$ and, consequently, $\delta^{\sigma_3}(k,k_1)$ has a square integrable singularity at $k=k_1$: 
$$
\delta(k,k_1)=O\left((k-k_1)^{-\frac{1}{2}+\varepsilon(k_1)}\right),\quad k\to k_1,\,\varepsilon(k_1)>0.
$$

Using the function $\delta(k,k_1)$ we make the first transformation of $M(x,t,k)$ as follows:
\begin{equation}\label{fsm1def}
M^{(1)}(x,t,k)=M(x,t,k)\delta^{-\sigma_3}(k,k_1),\quad k\in\mathbb{C}\setminus\left\{\mathbb{R}\cup \overline{B}\right\},
\end{equation}
where $\overline{B}=[-iA,iA]$. Then $M^{(1)}$ solves the following Riemann-Hilbert problem:
\begin{subequations}
\begin{align}
&M^{(1)}_+(x,t,k)=M^{(1)}_-(x,t,k)J^{(1)}(x,t,k),&& k\in\mathbb{R}\cup B,\\
&M^{(1)}(x,t,k)=I+O(k^{-1}), &&k\to\infty,\\
&M^{(1)}(x,t,k)=O\left((k\pm iA)^{-\frac{1}{4}}\right),&& k\to\mp iA,\\
\label{fsm1k1}
&M^{(1)}(x,t,k)=O\left((k-k_1)^{-\frac{1}{2}+\varepsilon(k_1)}\right),&& k\to k_1,\,\varepsilon(k_1)>0,
\end{align}
\end{subequations}
with the jump matrix $J^{(1)}(x,t,k)$ in the form
\begin{equation}
J^{(1)}=
\begin{cases}
\begin{pmatrix}
1& 0\\
\frac{r_1(k)\delta_-^{-2}(k,k_1)}{1+ r_1(k)r_2(k)}e^{2it\theta}& 1\\
\end{pmatrix}
\begin{pmatrix}
1& \frac{r_2(k)\delta_+^{2}(k,k_1)}{1+ r_1(k)r_2(k)}e^{-2it\theta}\\
0& 1\\
\end{pmatrix},& k<k_1,\\

\begin{pmatrix}
1& r_2(k)\delta^2(k,k_1)e^{-2it\theta}\\
0& 1\\
\end{pmatrix}
\begin{pmatrix}
1& 0\\
r_1(k)\delta^{-2}(k,k_1)e^{2it\theta}& 1\\
\end{pmatrix}, &k>k_1,\\
\begin{pmatrix}
-ir_2(k)e^{-2it\theta} & i\delta^{2}(k,k_1)\\
i(1+r_1(k)r_2(k))\delta^{-2}(k,k_1) & -ir_1(k)e^{2it\theta}
\end{pmatrix},& k\in(0,iA),\\
\begin{pmatrix}
ir_2(k)e^{-2it\theta} & i(1+r_1(k)r_2(k))\delta^{2}(k,k_1)\\
i\delta^{-2}(k,k_1)& ir_1(k)e^{2it\theta}
\end{pmatrix},& k\in(-iA,0).
\end{cases}
\end{equation}

Now we are in a position to start the so-called ``opening lenses'' procedure: the Riemann-Hilbert problem is transformed in such a way that 
the jump matrix across the new contours significantly simplifies as 
$t\to\infty$, and the original RH problem can be approximated by an exactly solvable one with well controlled errors. 
For this purpose we need the spectral functions $r_j(k)$, $j=1,2$ to be analytically continued at least in a neighborhood of the contour $\mathbb{R}\cup B$. 
This takes place, particularly, when the initial data $q_0(x)$ is a compact perturbation of the boundary values: for such $q_0(x)$, the reflection coefficients $r_j(k)$, $j=1,2$ can be continued to the whole complex plane.

Assuming that $r_j(k)$, $j=1,2$ can be continued at least in a 
neighborhood of $\mathbb{R}\cup B$, we introduce $M^{(2)}(x,t,k)$ as follows (we drop the arguments of $M^{(j)}(x,t,k)$, $j=1,2$):
\begin{figure}[h]
\begin{minipage}[h]{0.99\linewidth}
	\centering{\includegraphics[width=0.79\linewidth]{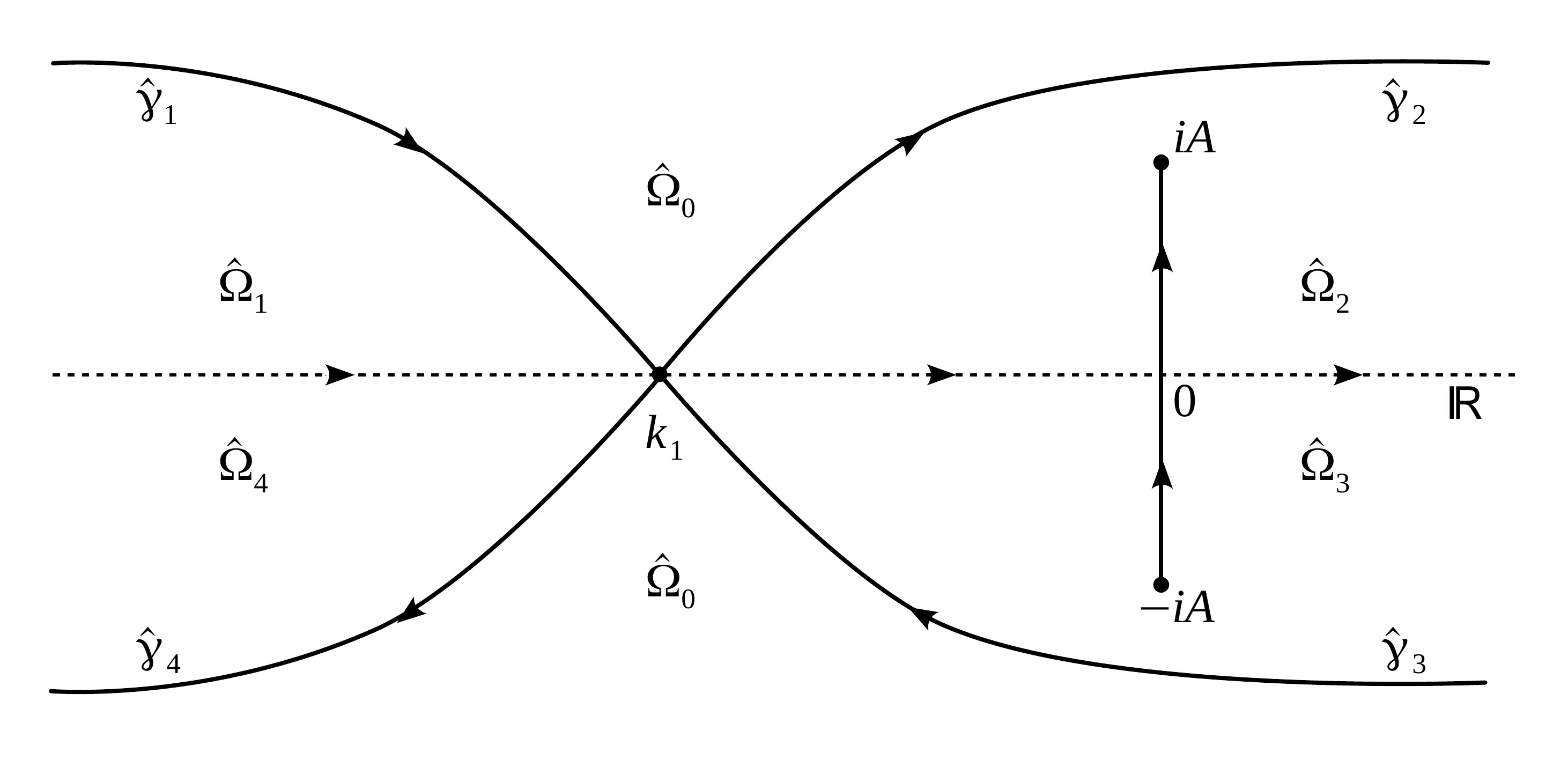}}
	\caption{Contour $\hat\Gamma=\hat\gamma_1\cup\dots\cup\hat\gamma_4$ and domains $\Omega_j$, $j=0,\dots,4$ in the plane wave region.}
	\label{cont_1_pw}
\end{minipage}
\end{figure}
$$
M^{(2)}=
\begin{cases}
M^{(1)},\, k\in\hat\Omega_0;\quad
M^{(1)}
\begin{pmatrix}
1& \frac{-r_2(k)\delta^{2}(k,k_1)}{1+r_1(k)r_2(k)}e^{-2it\theta}\\
0& 1\\
\end{pmatrix}
,\, k\in\hat\Omega_1;
\\
M^{(1)}
\begin{pmatrix}
1& 0\\
-r_1(k)\delta^{-2}(k,k_1)e^{2it\theta}& 1\\
\end{pmatrix}
,\, k\in\hat\Omega_2;\quad
M^{(1)}
\begin{pmatrix}
1& r_2(k)\delta^2(k,k_1)e^{-2it\theta}\\
0& 1\\
\end{pmatrix}
,\, k\in\hat\Omega_3;
\\
M^{(1)}
\begin{pmatrix}
1& 0\\
\frac{r_1(k)\delta^{-2}(k,k_1)}{1+r_1(k)r_2(k)}e^{2it\theta}& 1\\
\end{pmatrix}
,\, k\in\hat\Omega_4,
\end{cases}
$$
where $\Omega_j$, $j=1,\dots,4$ are depictured in Figure \ref{cont_1_pw}.
Then $M^{(2)}(x,t,k)$ solves the following Riemann-Hilbert problem on the contour $\hat\Gamma\cup B$,
$\hat\Gamma=\hat\gamma_1\cup\dots\cup\hat\gamma_4$ (see Figure \ref{cont_1_pw}):
\begin{subequations}
\begin{align}
&M^{(2)}_+(x,t,k)=M^{(2)}_-(x,t,k)J^{(2)}(x,t,k),&&k\in\hat\Gamma\cup B,\\
&M^{(2)}(x,t,k)=I+O(k^{-1}), &&k\to\infty,\\
&M^{(2)}(x,t,k)=O\left((k\pm iA)^{-\frac{1}{4}}\right),&& k\to\mp iA,\\
&M^{(2)}(x,t,k)=O\left((k-k_1)^{-\frac{1}{2}+\varepsilon(k_1)}\right),&& k\to k_1,\,\varepsilon(k_1)>0,
\end{align}
\end{subequations}
where
\begin{equation}
\label{fsJ2}
J^{(2)}=
\begin{cases}
\begin{pmatrix}
1& \frac{r_2(k)\delta^{2}(k,k_1)}{1+r_1(k)r_2(k)}e^{-2it\theta}\\
0& 1\\
\end{pmatrix}
,\, k\in\hat\gamma_1;\quad
\begin{pmatrix}
1& 0\\
r_1(k)\delta^{-2}(k,k_1)e^{2it\theta}& 1\\
\end{pmatrix}
,\ k\in\hat\gamma_2;
\\
\begin{pmatrix}
1& -r_2(k)\delta^2(k,k_1)e^{-2it\theta}\\
0& 1\\
\end{pmatrix}
,\, k\in\hat\gamma_3;\quad
\begin{pmatrix}
1& 0\\
\frac{-r_1(k)\delta^{-2}(k,k_1)}{1+r_1(k)r_2(k)}e^{2it\theta}& 1\\
\end{pmatrix}
,\, k\in\hat\gamma_4;\\
\begin{pmatrix}
0& i\delta^{2}(k,k_1)\\
i\delta^{-2}(k,k_1)&0
\end{pmatrix},\,k\in B.
\end{cases}
\end{equation}

Notice that according to the signature table of $\theta(k,\xi)$, the jump matrix $J^{(2)}(x,t,k)$ rapidly decays to the identity matrix for $k\in\hat\Gamma$ away from any vicinity of the stationary phase point $k=k_1$. In order to arrive to an exactly solvable (model) RH problem, we have to eliminate the dependence of $k$ in the jump across $k\in B$. This can be done by introducing the function $F(k,k_1)$ as a solution of the following scalar Riemann-Hilbert problem (cf. (4.10) in \cite{BKS11}):
\begin{subequations}\label{fsFRHab}
\begin{align}\label{fsFRH}
&F_+(k,k_1)F_-(k,k_1)=\delta^{2}(k,k_1),&& k\in B,\\
&F(k,k_1)=O(1),&&k\to\pm iA,\,k\to\infty.
\end{align}
\end{subequations}
Taking the logarithm of both sides in (\ref{fsFRH}) and dividing by $f(k)$ we obtain:
$$
\left(\frac{\ln F(k,k_1)}{f(k)}\right)_+ -
\left(\frac{\ln F(k,k_1)}{f(k)}\right)_-=
-\frac{2\ln\delta(k,k_1)}{f_-(k)},\quad k\in B.
$$
Then the Plemelj-Sokhotski formula gives the following integral representation for $F(k,k_1)$ (see (\ref{fsfj})):
\begin{equation}\label{fsF}
F(k,k_1)=\exp\left\{-\frac{f(k)}{\pi i}\int_{B}\frac{\ln\delta(\zeta,k_1)}{f(\zeta)(\zeta-k)}\,d\zeta\right\},\quad k\in\mathbb{C}\setminus\overline{B},
\end{equation}
which satisfies the RH problem (\ref{fsFRH}). In order to recover  $q(x,t)$
from the solution of the RH problem, we will need the large-$k$ behavior of $F(k,k_1)$:
\begin{equation}
F(k,k_1)= e^{iF_{\infty}(k_1)}+O(k^{-1}),\quad
k\to\infty,
\end{equation}
where
\begin{equation}\label{fsFinf}
F_{\infty}(k_1)=-\frac{1}{\pi}
\int_{B}\frac{\ln\delta(\zeta,k_1)}{f(\zeta)}\,d\zeta.
\end{equation}

Taking into account (\ref{fsdelta-int}) and that the contour of integration in (\ref{fsFinf}) lies on the imaginary axis, the real and imaginary part of $F_{\infty}(k_1)$ can be written as follows (cf. purely real $g_{\infty}$ given by (4.25) in \cite{BM17} in the case of the local NLS equation):
\begin{subequations}\label{fsreimFinf}
	\begin{align}
	&\Re F_{\infty}(k_1)=-\frac{1}{2i\pi^2}\int_{B}\frac{1}{f(\zeta)}
	\int_{-\infty}^{k_1}\frac{\ln|1+r_1(s)r_2(s)|}{s-\zeta}\,ds\,d\zeta,\\
	&\Im F_{\infty}(k_1)=-\frac{1}{2i\pi^2}\int_{B}\frac{1}{f(\zeta)}
	\int_{-\infty}^{k_1}\frac{\Delta(s)}{s-\zeta}\,ds\,d\zeta,
	\end{align}
\end{subequations}
where $\Delta(s)$ is given by (\ref{fsDelta-arg}).
\begin{remark}
	Due to the ``lack of symmetry'' for $r_1(k)$ and $r_2(k)$, the imaginary part of $F_\infty(k_1)$ is, in general, nonzero.
	It is in a sharp contrast with the local case, where $\Im F_{\infty}$ (see $g_{\infty}$ given by (4.25) in \cite{BM17}) is always zero due to the symmetry $r_2(k)=\bar{r}_1(k)$ for $k\in\mathbb{R}$. 
	Consequently, the asymptotics of the modulus of $q(x,t)$ 
will depend on  details of the initial data (cf. ``modulated constant'' region in the step-like problem for the NNLS equation \cite{RSs}).  This is qualitatively different from the local case, where the asymptotics in the plane wave region depends on $q_0(x)$ through the phase shift only \cite{BM17}.
\end{remark}
\begin{remark}
	\label{fsboundf}
	Taking into account that (see Chapter I, Paragraph 8.3 in \cite{G66})
	$$
	\int_B\frac{d\zeta}{f(\zeta)(\zeta-k)}=i\pi
	\left[(k+iA)^{-\frac{1}{2}}-(k-iA)^{-\frac{1}{2}}\right]+F_0(k),
	$$
	where $F_0(k)$ is an analytic function,
	one concludes that $F(k,k_1)$ is bounded at the endpoints $k=\pm iA$.
\end{remark}

Now we define  $M^{(3)}$ with the help of $F(k,k_1)$:
\begin{equation}
M^{(3)}(x,t,k)=e^{-iF_{\infty}(k_1)\sigma_3}
M^{(2)}(x,t,k)F^{\sigma_3}(k,k_1),
\quad k\in\mathbb{C}\setminus\left\{\mathbb{R}\cup\overline{B}\right\}.
\end{equation}
Then $M^{(3)}$ solves the RH problem with the constant jump across $B$:
\begin{subequations}\label{fsM3}
\begin{align}
&M^{(3)}_+(x,t,k)=M^{(3)}_-(x,t,k)J^{(3)}(x,t,k),&&
k\in\hat\Gamma\cup B,\\
&M^{(3)}(x,t,k)=I+O(k^{-1}), &&k\to\infty,\\
&M^{(3)}(x,t,k)=O\left((k\pm iA)^{-\frac{1}{4}}\right),&& k\to\mp iA,\\
&M^{(3)}(x,t,k)=O\left((k-k_1)^{-\frac{1}{2}+\varepsilon(k_1)}\right),&& k\to k_1,\,\varepsilon(k_1)>0,
\end{align}
\end{subequations}
with 
\begin{equation}
\label{fsJ3}
J^{(3)}(x,t,k)=
\begin{cases}
i\sigma_1,&k\in B,\\
F^{-\sigma_3}(k,k_1)J^{(2)}(x,t,k)F^{\sigma_3}(k,k_1),&k\in\hat\Gamma.
\end{cases}
\end{equation}

The solution $q(x,t)$ of the Cauchy problem (\ref{fsivp}) can be found in terms of $M^{(3)}(x,t,k)$ as follows:
\begin{subequations}\label{fsasolM3}
\begin{align}\label{fssolM3}
&q(x,t)=2ie^{2iA^2t+2iF_\infty(k_1)}\lim_{k\to\infty}kM_{12}^{(3)}(x,t,k),&& x>0,\\
\label{fssol1M3}
&q(-x,t)=-2ie^{2iA^2t+2i\overline{F_\infty(k_1)}}\lim_{k\to\infty}k\overline{M_{21}^{(3)}(x,t,k)},&& x>0.
\end{align}
\end{subequations}

Now we can calculate the long-time asymptotics of the solution in the plane wave region. 
\begin{theorem}\label{fsth1pw}
	(Plane wave region).\\
	Suppose that the initial data $q_0(x)$ is a compact perturbation of the background (\ref{fsivp-c}) and that the associated spectral functions $a_j(k)$ and $r_j(k)=\frac{b_j(k)}{a_j(k)}$, $j=1,2$ satisfy the following conditions:
	\begin{enumerate}
		\item $a_1(k)$ and $a_2(k)$ have no zeros in $\overline{\mathbb{C}^{+}}$ and $\overline{\mathbb{C}^{-}}$ respectively,
		\item $\int_{-\infty}^{k}d\arg(1+ r_1(\zeta)r_2(\zeta))\in(-\pi,\pi)$, for all $k<-\frac{A}{\sqrt{2}}$.
	\end{enumerate}
	Then the solution $q(x,t)$ 
	of problem (\ref{fsivp})
	has the following long-time asymptotics along the rays $\frac{x}{4t}=const$ uniformly in any compact subset of $\{\frac{x}{4t}\in\mathbb{R}:|\frac{x}{4t}|>\sqrt{2}A\}$:
	\begin{subequations}
	\begin{align}
	\label{fssolpw1}
	&q(x,t)=Ae^{-2\Im F_{\infty}(k_1)}
	e^{2i(A^2t+\Re F_{\infty}(k_1))}
	+E_1(x,t),&& t\to\infty,\quad \frac{x}{4t}>\sqrt{2}A,\\
	\label{fssolpw2}
	&q(-x,t)=Ae^{2\Im F_{\infty}(k_1)}
	e^{2i(A^2t+\Re F_{\infty}(k_1))}
	+E_2(x,t),&& t\to\infty,\quad -\frac{x}{4t}<-\sqrt{2}A,
	\end{align}
	\end{subequations}
	where $k_{1}=\frac{1}{2}\left(-\xi-\sqrt{\xi^2-2A^2}\right)$
	with $\xi=\frac{x}{4t}>0$, $\Re F_{\infty}(k_1)$ and $\Im F_{\infty}(k_1)$ are given by (\ref{fsreimFinf}), and the decaying terms $E_j(x,t)$, $j=1,2$ have the following form depending on the value of $\Im\nu(k_1)$:
	\begin{description}
	\item[(a)] if $\Im\nu(k_1)\in\left(-\frac{1}{2},-\frac{1}{6}\right]$, then
	\begin{align*}
	&E_1(x,t)=t^{-\frac{1}{2}-\Im\nu(k_1)}c_1(k_1)
	\exp\left\{2it(A^2+\theta(k_1,\xi))+i\Re\nu(k_1)\ln t\right\}
	+R(x,t),\\
	&E_2(x,t)=t^{-\frac{1}{2}-\Im\nu(k_1)}c_3(k_1)
	\exp\left\{2it(A^2-\theta(k_1,\xi))-i\Re\nu(k_1)\ln t\right\}
	+R(x,t).
	\end{align*}
	\item[(b)] if $\Im\nu(k_1)\in\left(-\frac{1}{6},\frac{1}{6}\right)$, then
	\begin{align}
	\label{fsE1b}
	\nonumber
	E_1(x,t)&=t^{-\frac{1}{2}+\Im\nu(k_1)}c_2(k_1)
	\exp\left\{2it(A^2-\theta(k_1,\xi))-i\Re\nu(k_1)\ln t\right\}&\\
	\nonumber
	&\quad+ 
	t^{-\frac{1}{2}-\Im\nu(k_1)}c_1(k_1)
	\exp\left\{2it(A^2+\theta(k_1,\xi))+i\Re\nu(k_1)\ln t\right\}
	+R(x,t),
	\end{align}
	\begin{align}
	\nonumber
	E_2(x,t)&=t^{-\frac{1}{2}+\Im\nu(k_1)}c_4(k_1)
	\exp\left\{2it(A^2+\theta(k_1,\xi))+i\Re\nu(k_1)\ln t\right\}&\\
	\nonumber
	&\quad+ 
	t^{-\frac{1}{2}-\Im\nu(k_1)}c_3(k_1)
	\exp\left\{2it(A^2-\theta(k_1,\xi))-i\Re\nu(k_1)\ln t\right\}
	+R(x,t).
	\end{align}
	\item[(c)] if $\Im\nu(k_1)\in\left[\frac{1}{6},\frac{1}{2}\right)$, then
	\begin{align*}
	&E_1(x,t)=t^{-\frac{1}{2}+\Im\nu(k_1)}c_2(k_1)
	\exp\left\{2it(A^2-\theta(k_1,\xi))-i\Re\nu(k_1)\ln t\right\}+R(x,t),\\
	&E_2(x,t)=t^{-\frac{1}{2}+\Im\nu(k_1)}c_4(k_1)
	\exp\left\{2it(A^2+\theta(k_1,\xi))+i\Re\nu(k_1)\ln t\right\}+R(x,t).
	\end{align*}
	\end{description}
	Here
	\begin{align}
	\nonumber
	c_1(k_1)=&\frac{\sqrt{\pi}\left(w(k_1)-1/w(k_1)\right)^2F^2(k_1,k_1)}
	{\sqrt{2}r_2(k_1)\Gamma(i\nu(k_1))}\\
	\nonumber
	&\times\exp\left\{
	2iF_{\infty}(k_1)-\frac{\pi}{2}\nu(k_1)+\frac{3\pi i}{4}-2\chi(k_1,k_1)-
	(1-2i\nu(k_1))\ln 2\sqrt{\frac{4k_1+2\xi}{f(k_1)}}
	\right\},
	\end{align}
	\begin{align}
	\nonumber
	c_2(k_1)=&\frac{\sqrt{\pi}\left(w(k_1)+1/w(k_1)\right)^2}
	{\sqrt{2}r_1(k_1)\Gamma(-i\nu(k_1))F^2(k_1,k_1)}\\
	\nonumber
	&\times\exp\left\{
	2iF_{\infty}(k_1)-\frac{\pi}{2}\nu(k_1)+\frac{\pi i}{4}+2\chi(k_1,k_1)-
	(1+2i\nu(k_1))\ln 2\sqrt{\frac{4k_1+2\xi}{f(k_1)}}
	\right\},
	\end{align}
	\begin{align}
	\nonumber
	c_3(k_1)=&\frac{\sqrt{\pi}\left(w(k_1)+1/w(k_1)\right)^2
		\overline{F^2(k_1,k_1)}}
	{\sqrt{2}\,\overline{r_2}(k_1)\Gamma(-i\overline{\nu(k_1)})}\\
	\nonumber
	&\times\exp\left\{
	2i\overline{F_{\infty}(k_1)}-\frac{\pi}{2}\overline{\nu(k_1)}+\frac{\pi i}{4}-2\overline{\chi}(k_1,k_1)-
	(1+2i\overline{\nu(k_1)})\ln 2\sqrt{\frac{4k_1+2\xi}{f(k_1)}}
	\right\},
	\end{align}
	\begin{align}
	\nonumber
	c_4(k_1)=&\frac{\sqrt{\pi}\left(w(k_1)-1/w(k_1)\right)^2}
	{\sqrt{2}\,\overline{r_1}(k_1)\Gamma(i\overline{\nu(k_1)})
		\overline{F^2(k_1,k_1)}}\\
	\nonumber
	&\times\exp\left\{
	2i\overline{F_{\infty}(k_1)}-\frac{\pi}{2}\overline{\nu(k_1)}+\frac{3\pi i}{4}+2\overline{\chi}(k_1,k_1)-
	(1-2i\overline{\nu(k_1)})\ln 2\sqrt{\frac{4k_1+2\xi}{f(k_1)}}
	\right\},
	\end{align}
	where $\Gamma(\cdot)$ is the Euler Gamma-function and $w(k_1)$, $f(k_1)$, $\chi(k_1, k_1)$, $\nu(k_1)$, and $F(k_1,k_1)$ are given by (\ref{fsK}), (\ref{fsf}), (\ref{fschi}), (\ref{fsnu}) and (\ref{fsF}), respectively. The error term $R(x,t)$ has the form
	\begin{equation}
	\label{fsR3}
	R(x,t)=
	\begin{cases}
	O\left(t^{-1+2|\Im\nu(k_1)|}\right),&\Im\nu(k_1)\not=0,\\
	O\left(t^{-1}\ln t\right),&\Im\nu(k_1)=0.
	\end{cases}
	\end{equation}
\end{theorem}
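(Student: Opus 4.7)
The plan is to complete the Deift--Zhou nonlinear steepest descent analysis of $M^{(3)}$ set up above. The constant jump $i\sigma_1$ on $B$ together with the lens jumps on $\hat\Gamma$ (which, away from $k_1$, decay exponentially in $t$ by virtue of the signature table in Figure~\ref{sign_pw}) call for the standard decomposition into a global model parametrix on $\mathbb{C}\setminus B$ and a local parametrix in a small disk around $k_1$.

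First, I would construct the global parametrix $M^{\mathrm{mod}}(k)$ as the explicit solution of the pure-jump problem $M^{\mathrm{mod}}_+ = M^{\mathrm{mod}}_- i\sigma_1$ on $B$, normalized to $I$ at infinity and with integrable singularities at $k=\pm iA$; the symmetry (\ref{fssymK}) identifies $M^{\mathrm{mod}}(k)$ essentially with $\mathcal{E}(k)$. Second, I would build a local parametrix $M^{\mathrm{loc}}(x,t,k)$ inside a small disk $D_{k_1}$. Since $k_1<-A/\sqrt{2}$ lies at positive distance from $B$, only the jumps on $\hat\Gamma\cap D_{k_1}$ survive; expanding $\theta(k,\xi)=\theta(k_1,\xi)+\tfrac{1}{2}\theta''(k_1,\xi)(k-k_1)^2+O((k-k_1)^3)$ and combining this with the $(k-k_1)^{\pm 2i\nu(k_1)}$ singular factor from (\ref{fsdelta-singular}), the rescaling $z=\sqrt{-2t\,\theta''(k_1,\xi)}\,(k-k_1)$ reduces the local problem to the classical parabolic cylinder model with parameter $\nu(k_1)$. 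Assumption (\ref{fsarg-ass}) guarantees $|\Im\nu(k_1)|<1/2$, so the model is well posed, and its explicit solution in terms of $D_{\pm i\nu(k_1)}$-functions yields both the matching on $\partial D_{k_1}$ (of accuracy $O(t^{-1/2+|\Im\nu(k_1)|})$) and, through the large-$z$ expansion, the constants $c_1(k_1),\ldots,c_4(k_1)$ after undoing the conjugations by $F^{\sigma_3}(k,k_1)$ and $e^{iF_\infty(k_1)\sigma_3}$.

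Third, I would set $R(x,t,k)=M^{(3)}(x,t,k)M^{\mathrm{app}}(x,t,k)^{-1}$ with $M^{\mathrm{app}}=M^{\mathrm{loc}}$ inside $D_{k_1}$ and $M^{\mathrm{app}}=M^{\mathrm{mod}}$ outside. By construction $R$ has no jump across $B$ and no singularity at $\pm iA$ or $k_1$; its remaining jump is $I$ plus an $O(t^{-1/2+|\Im\nu(k_1)|})$ term on $\partial D_{k_1}$ and exponentially small on $\hat\Gamma\setminus D_{k_1}$. A standard small-norm Riemann--Hilbert argument then yields
\begin{equation*}
R(x,t,k)=I+\frac{R_1(x,t)}{k}+O\bigl(t^{-1+2|\Im\nu(k_1)|}\bigr),
\end{equation*}
with the $O(t^{-1}\ln t)$ refinement when $\Im\nu(k_1)=0$, and $R_1(x,t)$ read off from the large-$z$ asymptotics of $M^{\mathrm{loc}}$. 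Substituting into (\ref{fsasolM3}), the $I$-contribution combined with the prefactor $e^{2iA^2t+2iF_\infty(k_1)}$ produces the leading term $Ae^{-2\Im F_\infty(k_1)}e^{2i(A^2t+\Re F_\infty(k_1))}$, while the entries of $R_1$ give the oscillatory terms with phases $\pm 2t\theta(k_1,\xi)\mp\Re\nu(k_1)\ln t$ and amplitudes $t^{-1/2\pm\Im\nu(k_1)}c_j(k_1)$. The trichotomy (a)--(c) in the statement is then purely a bookkeeping matter: depending on whether $|\Im\nu(k_1)|<1/6$, $\in[1/6,1/2)$, or $\in(-1/2,-1/6]$, exactly one or both of $t^{-1/2\pm\Im\nu(k_1)}$ exceed the generic error $t^{-1+2|\Im\nu(k_1)|}$ and must be kept explicitly, the other being absorbed into $R(x,t)$.

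The main technical obstacle is the rigorous construction of the local parametrix when $\nu(k_1)$ is genuinely complex: the factor $(k-k_1)^{\pm 2i\nu(k_1)}$ then produces a non-integer branching singularity at $k_1$, and one must verify that the conjugation by the bounded but $\xi$-dependent factor $F^{\sigma_3}(k,k_1)$ (cf.\ Remark~\ref{fsboundf}) does not spoil either the matching on $\partial D_{k_1}$ or the square-integrability condition (\ref{fsm1k1}). The careful bookkeeping required to produce each $c_j(k_1)$ in closed form and to track which subleading exponentials survive above the error threshold in each of the regimes $|\Im\nu(k_1)|\lessgtr 1/6$ is the most delicate part of the argument; everything else is an adaptation of the standard Deift--Zhou scheme, tailored to the asymmetry of $r_1$ and $r_2$ inherent to the nonlocal problem.
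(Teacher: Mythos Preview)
Your proposal is correct and follows essentially the same route as the paper's proof in Appendix~A: the global parametrix is indeed $\mathcal{E}(k)$, the local parametrix at $k_1$ is the parabolic-cylinder model (with the $F$-conjugation absorbed via $\tilde r_j=r_jF^{\mp2}$), and the small-norm argument for the error matrix yields exactly (\ref{fsR3}) and the trichotomy (a)--(c). The only cosmetic slip is the sign under the square root in your rescaling---the paper takes $z\approx 2\sqrt{t\theta_2}\,(k-k_1)$ with $\theta_2=\theta''(k_1,\xi)/2>0$---but this does not affect the scheme.
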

\begin{proof}
See Appendix A.
\end{proof}

\subsection{The modulated elliptic wave regions: $0<|\frac{x}{4t}|<\sqrt{2}A$}\label{fsmewr}
The present Subsection is devoted to obtaining the large-time asymptotics of the solution in the modulated elliptic wave regions,
which are defined by $0<|\frac{x}{4t}|<\sqrt{2}A$.

The main difference of this case comparing with the plane wave regions is the absence of real stationary phase points (see (\ref{fsstphc})). Similarly to
\cite{BV07} (see also \cite{BM17, BKS11}), we set a change-of-factorization point $k=k_0$, $k_0<0$ (see Figure \ref{cont_1_ew}), and then deal with exponentially growing (as $t\to\infty$) jump matrices. This artificial ``stationary phase point'' will be found as the solution of a linear integral equation (see (\ref{fsintk0}) below).

The transformations from  $M(x,t,k)$ to $M^{(2)}(x,t,k)$ are as in the plane wave case, with the only difference: instead of $\delta(k,k_1)$, in the modulated elliptic wave region we take $\delta(k,k_0)$. 
As in the plane wave case, we make an assumption on the winding of the argument of $(1+r_1(k)r_2(k))$:

\underline{\textbf{Assumption}} 
\textit{(Winding of the argument, elliptic wave region).}
We impose the following restriction on the winding of the argument of $(1+r_1(k)r_2(k))$ for $-\frac{A}{\sqrt{2}}<k<0$ in the modulated elliptic wave region (cf. (\ref{fsarg-ass})):
\begin{equation}\label{fsDelta-argew}
\int_{-\infty}^{k}d\arg(1+ r_1(\zeta)r_2(\zeta))
\in(-\pi,\pi),\quad \text{for all}\ \ k\in\left(-\frac{A}{\sqrt{2}},0\right),
\end{equation}

Then $\delta^{\sigma_3}(k,k_0)$ has square integrable singularity at $k=k_0$, and the Riemann-Hilbert problem for $M^{(2)}(x,t,k)$ has the form (see Figure \ref{cont_1_ew}):
\begin{figure}[h]
	\begin{minipage}[h]{0.99\linewidth}
		\centering{\includegraphics[width=0.79\linewidth]{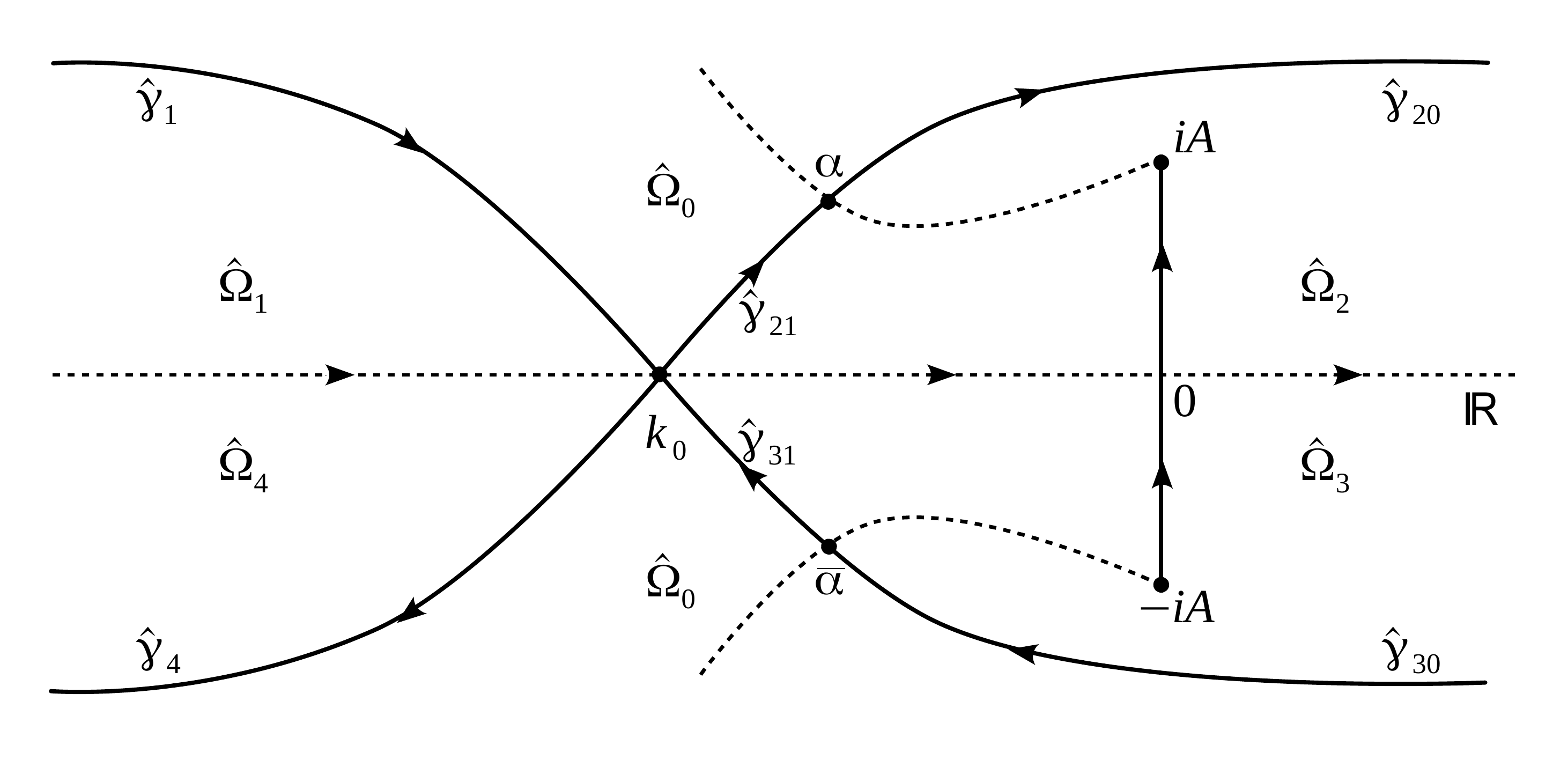}}
		\caption{Contour $\hat\Gamma=\hat\gamma_1\cup\dots\cup\hat\gamma_4$, where $\hat\gamma_j=\hat\gamma_{j0}\cup\hat\gamma_{j1}$, $j=2,3$, and domains $\Omega_j$, $j=0,\dots,4$ for the Riemann-Hilbert problem $M^{(2)}$ in the elliptic wave region.}
		\label{cont_1_ew}
	\end{minipage}
\end{figure}
\begin{subequations}
\begin{align}
&M^{(2)}_+(x,t,k)=M^{(2)}_-(x,t,k)J^{(2)}(x,t,k),&&k\in\hat\Gamma\cup B,\\
&M^{(2)}(x,t,k)=I+O(k^{-1}), &&k\to\infty,\\
&M^{(2)}(x,t,k)=O\left((k\pm iA)^{-\frac{1}{4}}\right),&& k\to\mp iA,\\
&M^{(2)}(x,t,k)=O\left((k-k_0)^{-\frac{1}{2}+\varepsilon(k_0)}\right),&& k\to k_0,\,\varepsilon(k_0)>0,
\end{align}
\end{subequations}
where the point $\alpha$ will be specified below (see (\ref{fsreaima})) and
\begin{equation}
\label{fsJ2ew}
J^{(2)}=
\begin{cases}
\begin{pmatrix}
1& \frac{r_2(k)\delta^{2}(k,k_0)}{1+r_1(k)r_2(k)}e^{-2it\theta}\\
0& 1\\
\end{pmatrix}
,\, k\in\hat\gamma_1;\quad
\begin{pmatrix}
1& 0\\
r_1(k)\delta^{-2}(k,k_0)e^{2it\theta}& 1\\
\end{pmatrix}
,\ k\in\hat\gamma_2;
\\
\begin{pmatrix}
1& -r_2(k)\delta^2(k,k_0)e^{-2it\theta}\\
0& 1\\
\end{pmatrix}
,\, k\in\hat\gamma_3;\quad
\begin{pmatrix}
1& 0\\
\frac{-r_1(k)\delta^{-2}(k,k_0)}{1+r_1(k)r_2(k)}e^{2it\theta}& 1\\
\end{pmatrix}
,\, k\in\hat\gamma_4;\\
\begin{pmatrix}
0& i\delta^{2}(k,k_0)\\
i\delta^{-2}(k,k_0)&0
\end{pmatrix},\,k\in B.
\end{cases}
\end{equation}

Notice that the jump matrix $J^{(2)}(x,t,k)$ is exponentially growing for $k\in\hat\gamma_{21}\cup\hat\gamma_{31}$.
To resolve the issue of this  growth, we use the factorizations on this part of the contour, which are similar to 
(4.2), (4.3) in \cite{BV07} (see also (5.3) in \cite{BM17}).
Namely, we define $M^{(3)}(x,t,k)$ as follows (see Figure \ref{cont_2_ew}):
$$
M^{(3)}=
\begin{cases}
M^{(2)}
\begin{pmatrix}
1& -\frac{\delta^{2}(k,k_0)}{r_1(k)}e^{-2it\theta}\\
0&1
\end{pmatrix},\, k\in\hat\Omega_{01};\quad
M^{(2)}
\begin{pmatrix}
1& \frac{\delta^{2}(k,k_0)}{r_1(k)}e^{-2it\theta}\\
0&1
\end{pmatrix},\, k\in\hat\Omega_{21};\\
M^{(2)}
\begin{pmatrix}
1& 0\\
\frac{e^{2it\theta}}{r_2(k)\delta^{2}(k,k_0)}&1
\end{pmatrix},\, k\in\hat\Omega_{02};\quad
M^{(2)}
\begin{pmatrix}
1& 0\\
-\frac{e^{2it\theta}}{r_2(k)\delta^{2}(k,k_0)}&1
\end{pmatrix},\, k\in\hat\Omega_{31};\\
M^{(2)},\text{ otherwise}.
\end{cases}
$$
\begin{figure}[h]
	\begin{minipage}[h]{0.99\linewidth}
		\centering{\includegraphics[width=0.79\linewidth]{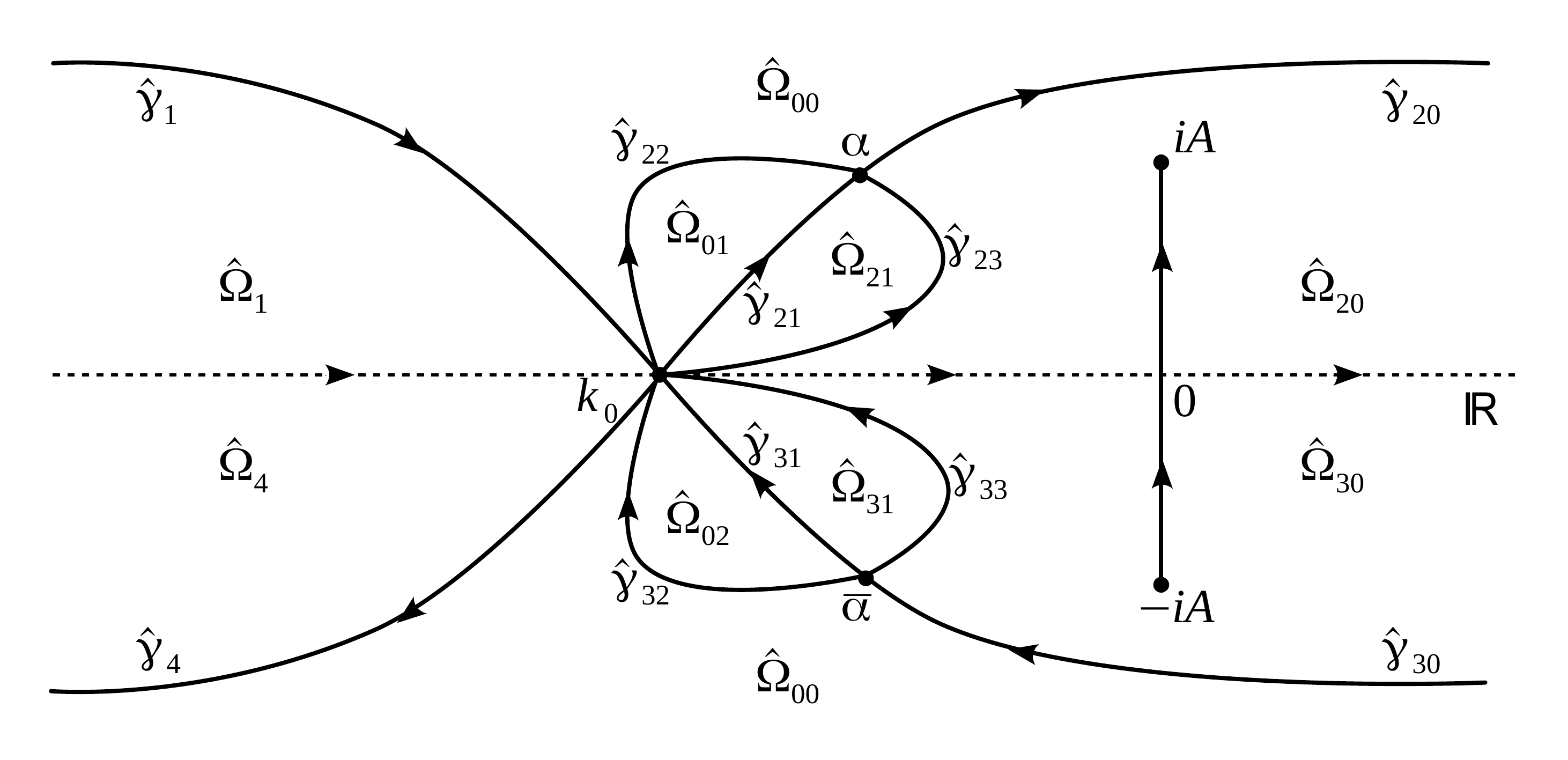}}
		\caption{Contour $\tilde{\Gamma}=\hat\Gamma\cup
			\left(
			\bigcup\limits_{i,j=2}^{3}
			\hat\gamma_{ij}\right)
			$ 
			and domains in the Riemann-Hilbert problems for $M^{(j)}(x,t,k)$, $j=3,4,5$ in the elliptic wave region.}
		\label{cont_2_ew}
	\end{minipage}
\end{figure}
It is straightforward to show that jump matrix $J^{(3)}(x,t,k)$ associated with sectionally holomorphic matrix $M^{(3)}(x,t,k)$ grows as $t\to\infty$
for $k\in\hat\gamma_{21}\cup\hat\gamma_{31}$ only:

\begin{equation}
\label{fsJ3ew}
J^{(3)}=
\begin{cases}
\begin{pmatrix}
0& -\frac{\delta^{2}(k,k_0)}{r_1(k)}e^{-2it\theta}\\
\frac{r_1(k)}{\delta^{2}(k,k_0)}e^{2it\theta}&0
\end{pmatrix},\,k\in\hat\gamma_{21};\quad
\begin{pmatrix}
1& \frac{\delta^{2}(k,k_0)}{r_1(k)}e^{-2it\theta}\\
0&1
\end{pmatrix},\, k\in\hat\gamma_{22}\cup\hat\gamma_{23};
\\
\begin{pmatrix}
0& -r_2(k)\delta^{2}(k,k_0)e^{-2it\theta}\\
\frac{e^{2it\theta}}{r_2(k)\delta^{2}(k,k_0)}&0
\end{pmatrix},\,k\in\hat\gamma_{31};\quad
\begin{pmatrix}
1& 0\\
-\frac{e^{2it\theta}}{r_2(k)\delta^{2}(k,k_0)}&1
\end{pmatrix},\, k\in\hat\gamma_{32}\cup\hat\gamma_{33};
\\
J^{(2)}(x,t,k),\text{ otherwise}.
\end{cases}
\end{equation}
To circumvent the problem of exponentially growing anti-diagonal jump matrices across $\hat\gamma_{21}\cup\hat\gamma_{31}$, we employ the $g$-function mechanism, firstly introduced by Deift, Venakides and Zhou \cite{DVZ94, DVZ97}. 
Namely, introduce a new phase function $h(k,\xi,k_0,\alpha)$ as a sectionally analytic function, which has a jump along $k\in
B\cup\hat\gamma_{21}\cup\hat\gamma_{31}$ and is bounded at the endpoints.
In order to deal with bounded at infinity sectionally analytic functions, $h(k)=h(k,\xi,k_0,\alpha)$ should have a  behavior for large $k$ that is
 similar to $\theta(k,\xi)$. Since (see (\ref{fstheta}))
\begin{equation}\label{fsthetakinf}
\theta(k,\xi)=2k^2+4\xi k+A^2+O(k^{-1}), \quad k\to\infty,
\end{equation}
we assume that
\begin{equation}\label{fshkinf}
h(k)=2k^2+4\xi k+H_\infty+O(k^{-1}), \quad k\to\infty,
\end{equation}
where $H_\infty$ will be found below (see (\ref{fsH0})).

Supposing that an appropriate $h(k)$ is known, we define $M^{(4)}(x,t,k)$ as follows:
\begin{equation}
M^{(4)}(x,t,k)=e^{-it(H_\infty-A^2)\sigma_3}M^{(3)}(x,t,k)
e^{-it(\theta(k,\xi)-h(k,\xi,k_0,\alpha))\sigma_3},\quad k\in\mathbb{C}\setminus
\left\{\tilde\Gamma\cup \overline{B}\right\}.
\end{equation}
Then $M^{(4)}(x,t,k)$ satisfies the following RH problem
\begin{subequations}
\begin{align}
&M^{(4)}_+(x,t,k)=M^{(4)}_-(x,t,k)J^{(4)}(x,t,k),&&k\in\tilde\Gamma\cup B,\\
&M^{(4)}(x,t,k)=I+O(k^{-1}), &&k\to\infty,\\
&M^{(4)}(x,t,k)=O\left((k\pm iA)^{-\frac{1}{4}}\right),&& k\to\mp iA,\\
&M^{(4)}(x,t,k)=O\left((k-k_0)^{-\frac{1}{2}+\varepsilon(k_0)}\right),&& k\to k_0,\,\varepsilon(k_0)>0,
\end{align}
\end{subequations}
where
\begin{equation}
\label{fsJ4ew}
J^{(4)}=
\begin{cases}
\begin{pmatrix}
0& -\frac{\delta^{2}(k,k_0)}{r_1(k)}e^{-it(h_++h_-)}\\
\frac{r_1(k)}{\delta^{2}(k,k_0)}e^{it(h_++h_-)}&0
\end{pmatrix},\,k\in\hat\gamma_{21};\quad
\begin{pmatrix}
1& \frac{\delta^{2}(k,k_0)}{r_1(k)}e^{-2ith}\\
0&1
\end{pmatrix},\, k\in\hat\gamma_{22}\cup\hat\gamma_{23};
\\
\begin{pmatrix}
0& -r_2(k)\delta^{2}(k,k_0)e^{-it(h_++h_-)}\\
\frac{e^{it(h_++h_-)}}{r_2(k)\delta^{2}(k,k_0)}&0
\end{pmatrix},\,k\in\hat\gamma_{31};\,
\begin{pmatrix}
1& 0\\
-\frac{e^{2ith}}{r_2(k)\delta^{2}(k,k_0)}&1
\end{pmatrix},\, k\in\hat\gamma_{32}\cup\hat\gamma_{33};\\
\begin{pmatrix}
1& \frac{r_2(k)\delta^{2}(k,k_0)}{1+r_1(k)r_2(k)}e^{-2ith}\\
0& 1\\
\end{pmatrix}
,\, k\in\hat\gamma_1;\quad
\begin{pmatrix}
1& 0\\
r_1(k)\delta^{-2}(k,k_0)e^{2ith}& 1\\
\end{pmatrix}
,\ k\in\hat\gamma_{20};
\\
\begin{pmatrix}
1& -r_2(k)\delta^2(k,k_0)e^{-2ith}\\
0& 1\\
\end{pmatrix}
,\, k\in\hat\gamma_{30};\quad
\begin{pmatrix}
1& 0\\
\frac{-r_1(k)\delta^{-2}(k,k_0)}{1+r_1(k)r_2(k)}e^{2ith}& 1\\
\end{pmatrix}
,\, k\in\hat\gamma_4;\\
\begin{pmatrix}
0& i\delta^{2}(k,k_0)e^{-it(h_++h_-)}\\
i\delta^{-2}(k,k_0)e^{it(h_++h_-)}&0
\end{pmatrix},\,k\in B.
\end{cases}
\end{equation}
The solution $q(x,t)$ can be found via $M^{(4)}(x,t,k)$ as follows:
\begin{subequations}\label{fsasolM4}
	\begin{align}\label{fssolM4}
	&q(x,t)=2ie^{2itH_\infty}\lim_{k\to\infty}kM_{12}^{(4)}(x,t,k),&& x>0,\\
	\label{fssol1M4}
	&q(-x,t)=-2ie^{2itH_\infty}\lim_{k\to\infty}k\overline{M_{21}^{(4)}(x,t,k)},&& x>0,
	\end{align}
\end{subequations}
where the real constant $H_{\infty}$ is given by (\ref{fsH0}).

Now the task is to define 
$h(k)\equiv h(k,\xi,k_0,\alpha)$ in a manner that the jump matrix $J^{(4)}(x,t,k)$ decays (exponentially fast) to $I$ except neighborhoods of stationary phase points and, possibly, some parts of the contour, where the jump matrix remains bounded as $t\to\infty$.
Similarly to \cite{BKS11}, Section 4.3.1, we define the differential $dh(k)$ which   has three zeros at $k=\alpha$, $k=\overline{\alpha}$ and $k=k_0$:
\begin{equation}\label{fsdh}
dh(k)=4\frac{(k-k_0)(k-\alpha)(k-\overline{\alpha})}
{\sqrt{(k^2+A^2)(k-\alpha)(k-\overline{\alpha})}}dk.
\end{equation}
We consider $dh(k)$ as an Abelian differential of the second kind with poles at $\infty^{\pm}$ 
on the genus-1 Riemann surface $\Sigma$ of
\begin{equation}\label{fsgammaRS}
\gamma(k)=\sqrt{(k^2+A^2)(k-\alpha)(k-\overline{\alpha})},
\end{equation}
where the branch of the square root is specified by the asymptotics on the upper sheet $\Sigma_u$: $\gamma(k)\sim k^2$ as $k\to\infty^{+}$. 
On the cuts $B$ and $(\overline{\alpha},\alpha)$ we set $\gamma(k)=\gamma_-(k)\equiv-\gamma_+(k)$. 
The canonical basis $\{\mathfrak{a},\mathfrak{b}\}$ of cycles on this Riemann surface is defined as follows. 
The $\mathfrak{b}$-cycle is a closed counterclockwise oriented simple loop around the branch cut $B$, which lies on the lower sheet $\Sigma_l$. 
The $\mathfrak{a}$-cycle starts on the upper sheet from the point $\overline{\alpha}$, then approaches $-iA$ and returns to the	starting point on the lower sheet (see Figure \ref{riemann_surf}).
\begin{figure}[h]
	\begin{minipage}[h]{0.49\linewidth}
		\centering{\includegraphics[width=0.99\linewidth]{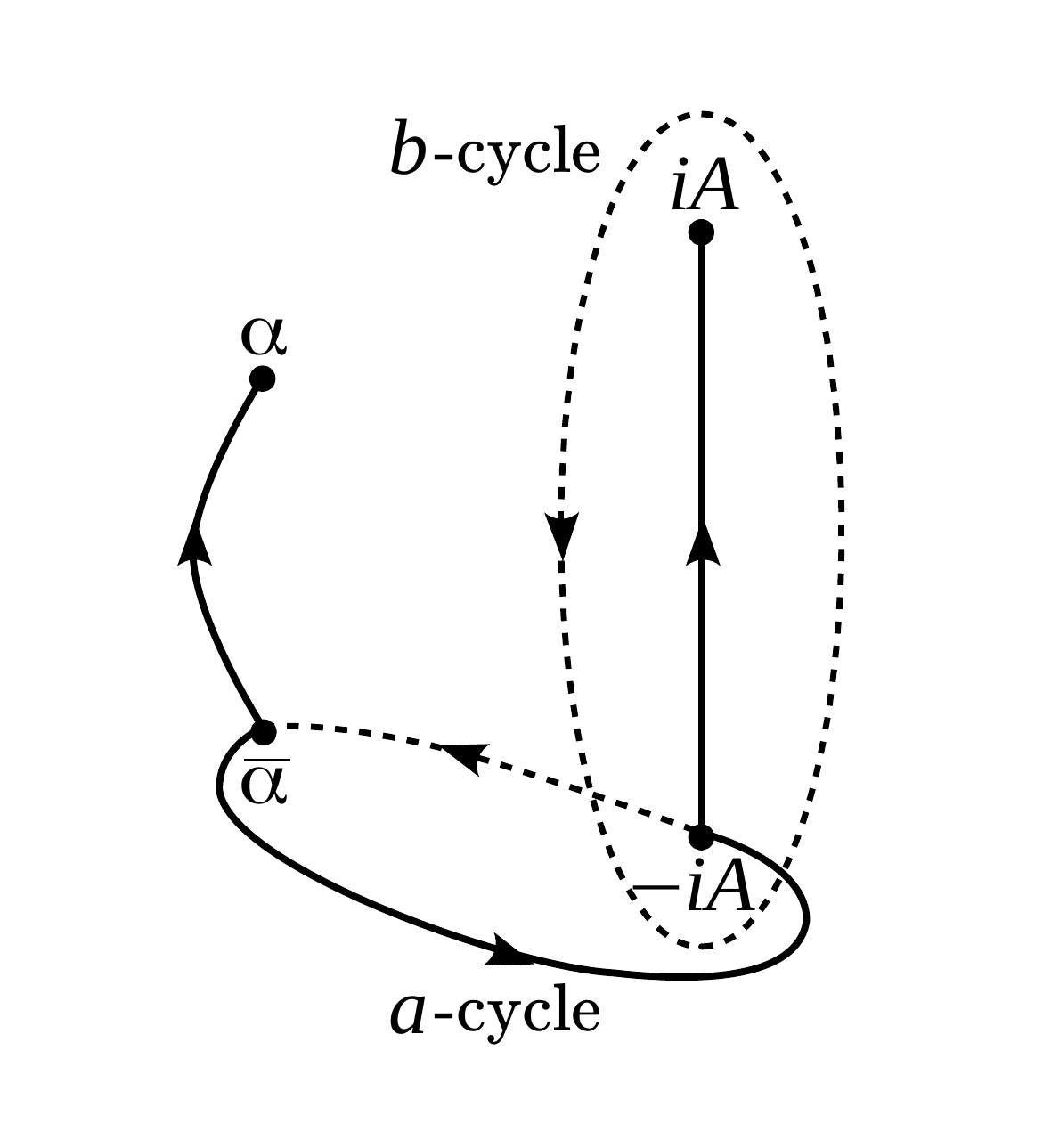}}
		\caption{Canonical basis $\{\mathfrak{a},\mathfrak{b}\}$ of cycles on the Riemann surface $\Sigma$ of $\gamma(k)$. The solid lines lie on the upper sheet $\Sigma_u$, while the dashed lines lie on the lower sheet $\Sigma_l$.}
		\label{riemann_surf}
	\end{minipage}
	\hfill
	\begin{minipage}[h]{0.49\linewidth}
		\centering{\includegraphics[width=0.99\linewidth]{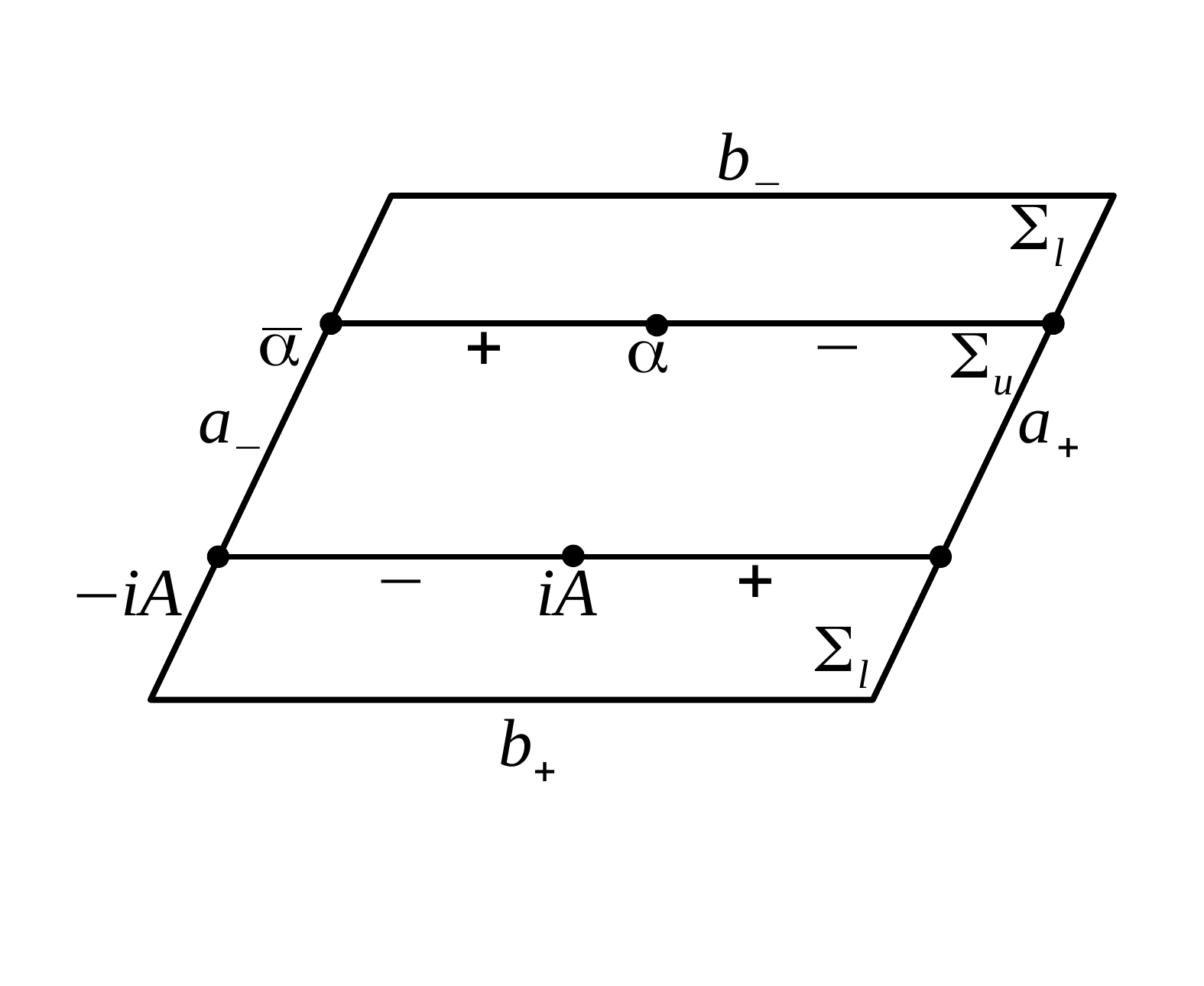}}
		\caption{The fundamental rectangle of the Riemann surface $\Sigma$, obtained by cutting it along the basis $\{\mathfrak{a},\mathfrak{b}\}$. $\Sigma_{u}$ and $\Sigma_{l}$ are the upper and the lower sheets of $\Sigma$ respectively; ``$+$'' and ``$-$'' denotes the left and the right sides of the corresponding branch cuts.}
		\label{fund_rect}
	\end{minipage}
\end{figure}
Define $h(k)$ as the sum of two Abelian integrals (cf. \cite{BKS11, BM17}):
\begin{equation}\label{fsh}
h(k)=\frac{1}{2}\left(\int_{iA}^{k}+\int_{-iA}^{k}\right)\,dh(z),
\end{equation}
where $dh(z)$ is given by (\ref{fsdh}).
In order to make the locally holomorphic multivalued function $h(k)$ single valued, we assume that all integrals along paths on the Riemann surface lie on the fundamental rectangle obtained by cutting this surface along the basis $\{\mathfrak{a},\mathfrak{b}\}$, see Figure \ref{fund_rect}.

The definition (\ref{fsh}) involves three real parameters: $k_0$, $\Re\alpha$ and $\Im\alpha$. We choose these parameters  in such a way that the jump matrix $J^{(4)}(x,t,k)$ become bounded for $k\in B$ and $k\in\hat\gamma_{21}\cup\hat\gamma_{31}$ and decaying for the other parts of the  contour:
\begin{enumerate}[(i)]
\item $h_+(k)+h_-(k)=0$, for $k\in B$,
\item $h_+(k)+h_-(k)\in\mathbb{R}$, for $k\in \hat\gamma_{21}\cup\hat\gamma_{31}$, is similar as that of $\Im\theta(k,\xi)$.
\end{enumerate}
These conditions are fulfilled for the parameters given in the following Proposition (cf. \cite{BM17, BKS11}).
\begin{proposition}\label{fsdefh}
	The phase function $h(k)$ defined by (\ref{fsh}), satisfies conditions (i)--(iii) if the parameters  $k_0$ and $\alpha$ have the following values:
	\begin{equation}\label{fsreaima}
	\Re\alpha=-k_0-\xi,\quad \Im\alpha=\sqrt{A^2+2k_0^2+2k_0\xi},
	\end{equation}
	and $k_0\in(-\xi/2,0)$ is the single solution of the integral equation:
	\begin{equation}\label{fsintk0}
	\int_B\sqrt{
	\frac{(k+k_0+\xi)^2+2k_0^2+2\xi k_0+A^2}
	{k^2+A^2}}
	(k-k_0)\,dk =0.
	\end{equation}
For such values of the parameters, $h(k)$ has the large $k$ asymptotics (\ref{fshkinf}), where
$H_\infty\in\mathbb{R}$ has the form
\begin{equation}\label{fsH0}
H_\infty=2\left(\int_{iA}^{\infty}+\int_{-iA}^{\infty}\right)\left[
\frac{(k-k_0)(k-\alpha)(k-\overline{\alpha})}{\gamma(k)}-
(k+\xi)\right]\,dk+2A^2.
\end{equation}
Moreover, for $k\in\hat\gamma_{21}\cup\hat\gamma_{31}$ the Abel integral $h(k)$ satisfies the following jump condition:
\begin{equation}\label{fshjumpa}
h_+(k)+h_-(k)=\Omega,
\quad k\in\hat\gamma_{21}\cup\hat\gamma_{31},
\end{equation}
where the real constant $\Omega$ is as follows:
\begin{equation}\label{fsOmega}
\Omega = \left(\int_{iA}^{\alpha}+\int_{-iA}^{\overline{\alpha}}\right)\,dh(k).
\end{equation}
\end{proposition}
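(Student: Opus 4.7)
\emph{Strategy and large-$k$ expansion.} The plan is to match the three real unknowns $k_{0}$, $\Re\alpha$, $\Im\alpha$ to three independent real conditions: the coefficient of $k$ in (\ref{fshkinf}), the absence of a $\log k$ term in the large-$k$ expansion of $h(k)$, and a single period condition that ensures the jump property (i). First I would expand $\gamma(k)$ on the upper sheet from $\gamma^{2}=(k^{2}+A^{2})(k-\alpha)(k-\bar\alpha)$ and perform long division into the cubic numerator $(k-k_{0})(k-\alpha)(k-\bar\alpha)$. Matching the resulting large-$k$ expansion of $dh/dk$ against the derivative of (\ref{fshkinf}) forces the coefficient of $k$ to equal $4\xi$ (giving $\Re\alpha=-k_{0}-\xi$) and the residue-like $1/k$-coefficient to vanish (giving, after substituting the first relation, $\Im\alpha^{2}=A^{2}+2k_{0}^{2}+2k_{0}\xi$), which is exactly (\ref{fsreaima}). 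The constant $H_{\infty}$ in (\ref{fsH0}) is then extracted from (\ref{fsh}) by subtracting the polynomial $2z^{2}+4\xi z$ from the integrand and letting $k\to\infty$; the boundary contribution at $\pm iA$ accounts for the additive $+2A^{2}$.

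\emph{Jump on $B$ and derivation of (\ref{fsintk0}).} Across $B$ one has $\gamma_{+}=-\gamma_{-}$, hence $dh_{+}=-dh_{-}$, so $(h_{+}+h_{-})^{\prime}\equiv 0$ along $B$ and $h_{+}+h_{-}$ is a single constant there. Since $dh$ has only a square-root singularity at $\pm iA$, $h$ is single-valued at each branch point, and the constant equals $2h(iA)=\int_{-iA}^{iA}dh$. Using $\tfrac{(k-k_{0})(k-\alpha)(k-\bar\alpha)}{\gamma(k)}=(k-k_{0})\sqrt{\tfrac{(k-\alpha)(k-\bar\alpha)}{k^{2}+A^{2}}}$ together with $(k-\alpha)(k-\bar\alpha)=(k+k_{0}+\xi)^{2}+(A^{2}+2k_{0}^{2}+2k_{0}\xi)$, the condition $\int_{-iA}^{iA}dh=0$ becomes exactly (\ref{fsintk0}); this integral equation is therefore equivalent to condition (i). Existence and uniqueness of $k_{0}\in(-\xi/2,0)$ follow from the intermediate value theorem: the Schwarz symmetry of the integrand at $k=iy$ (under $y\mapsto-y$ it becomes $-\overline{\cdot}\,$) reduces (\ref{fsintk0}) to a single monotone real equation in $k_{0}$, and sign checks at the endpoints $k_{0}=0$ and $k_{0}=-\xi/2$ produce opposite signs.

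\emph{Jump on $(\bar\alpha,\alpha)$ and derivation of (\ref{fsOmega}).} On the second cut the same mechanism $\gamma_{+}=-\gamma_{-}$ yields $(h_{+}+h_{-})^{\prime}\equiv 0$, so $h_{+}+h_{-}$ is locally constant on each half $\hat\gamma_{21}$, $\hat\gamma_{31}$. Because $\alpha,\bar\alpha$ are simple zeros of $dh$, the function $h$ is continuous there, and the two halves carry the constants $2h(\alpha)$ and $2h(\bar\alpha)$ respectively. The Schwarz symmetry $h(\bar\alpha)=\overline{h(\alpha)}$ together with (\ref{fsintk0}), which forces $\Im h(\alpha)=0$, identifies the two constants into a single real value $\Omega=h(\alpha)+h(\bar\alpha)=\bigl(\int_{iA}^{\alpha}+\int_{-iA}^{\bar\alpha}\bigr)dh$, which matches (\ref{fsOmega}); this proves condition (ii).

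\emph{Expected main obstacle.} The principal technical difficulty is managing the multi-valuedness of $h$ on the genus-1 surface $\Sigma$: every assertion ``$h_{+}+h_{-}=\mathrm{const}$'' on a cut must be validated in a fixed homotopy class of integration paths anchored in the fundamental rectangle of Figure~\ref{fund_rect}, and the coincidences $h(iA)=h(-iA)$ on $B$ and $h(\alpha)=h(\bar\alpha)$ on the second cut both ultimately rest on the single period condition (\ref{fsintk0}) combined with the Schwarz symmetry of $dh$ under $k\mapsto\bar k$.
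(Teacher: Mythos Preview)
Your proposal is correct and follows essentially the same route as the paper: match the large-$k$ expansion of $dh/dk$ to fix $\Re\alpha$ and $\Im\alpha$ (the paper does this via auxiliary coefficients $c_0,c_1,c_2$ of the cubic numerator, but the content is identical), impose the $\mathfrak{b}$-period condition $\int_B dh=0$ to obtain (\ref{fsintk0}), and read off $\Omega$ from the constancy of $h_++h_-$ on the second cut together with $\Im h(\alpha)=0$. Your derivation of $\Omega=h(\alpha)+h(\bar\alpha)=(\int_{iA}^{\alpha}+\int_{-iA}^{\bar\alpha})\,dh$ via Schwarz symmetry plus the period condition is exactly how the paper argues it.

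One component you omit, however, is the verification that the signature table of $\Im h$ matches that of $\Im\theta$ near the points $k=0$, $\alpha$, $\bar\alpha$; this is the substance of the implicit condition~(iii) and occupies roughly half of the paper's Appendix~B. You correctly observe that $\Im h(\alpha)=0$ follows from (\ref{fsintk0}), which the paper also uses, but you should in addition establish the local form $h(k)=h(\alpha)+(k-\alpha)^{3/2}H_\alpha(k)$ with $H_\alpha$ analytic and nonvanishing (so that exactly three branches of $\{\Im h=0\}$ emanate from $\alpha$), and check the sign of $\Im h$ near $k=0$ via the expansion $\Im h(k)\sim -\tfrac{4|\alpha|k_0}{A\,\mathrm{sign}(\Re k)}\Im k$. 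Without this, you have not fully shown that the new phase is suitable for the steepest-descent deformation. Finally, for existence and uniqueness of $k_0\in(-\xi/2,0)$ the paper invokes the implicit function theorem, citing \cite{BKS11} and Lemma~5.4 of \cite{BM17}, rather than the IVT/monotonicity argument you sketch; your approach is plausible but the monotonicity claim would need to be checked.
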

\begin{proof}
See Appendix B.
\end{proof}
Having constructed the new phase function $h(k)$, which satisfies conditions (i)--(iii), we must eliminate the dependence of $k$ from the jump matrix $J^{(4)}(x,t,k)$ across $B$ and $\hat\gamma_{21}\cup\hat\gamma_{31}$.
To do this, we define the scalar function $G(k)=G(k,k_0,\alpha)$ as the solution of a scalar Riemann-Hilbert problem (cf. (\ref{fsFRHab})):
\begin{subequations}\label{fsGRHall}
	\begin{align}\label{fsGRH}
	&G_+(k,k_0,\alpha)G_-(k,k_0,\alpha)=\delta^{2}(k,k_0),&& k\in B,\\
	&G_+(k,k_0,\alpha)G_-(k,k_0,\alpha)=
	\frac{\delta^2(k,k_0)}{r_1(k)}e^{i\omega},
	&&k\in\hat\gamma_{21},\\
	&G_+(k,k_0,\alpha)G_-(k,k_0,\alpha)=
	\delta^2(k,k_0)r_2(k)e^{i\omega},
	&&k\in\hat\gamma_{31},\\
	\label{fsGbound}
	&G(k,k_0,\alpha)=O(1),&&k\to\pm iA,\,k\to\Re\alpha\pm i\Im\alpha,\,k\to\infty.
	\end{align}
\end{subequations}
Using the arguments similar to those in the plane wave case for the auxiliary function $F(k,k_1)$, we arrive at the following integral representation for $G(k,k_0,\alpha)$:
\begin{align}
\nonumber
G(k,k_0,\alpha)=
\exp\left\{
-\frac{\gamma(k)}{2\pi i}\left(
\int_{B}\frac{2\ln\delta(\zeta,k_0)}{\gamma(\zeta)(\zeta-k)}\,d\zeta
+\int_{\hat\gamma_{21}}
\frac{\ln\frac{\delta^2(\zeta,k_0)}{r_1(\zeta)}+i\omega}
{\gamma(\zeta)(\zeta-k)}\,d\zeta\right.\right.\\
\label{fsintG}
\left.
\left.
+\int_{\hat\gamma_{31}}
\frac{\ln\left[\delta^2(\zeta,k_0)r_2(\zeta)\right]+i\omega}
{\gamma(\zeta)(\zeta-k)}\,d\zeta
\right)
\right\},
\end{align}
where $\omega\in\mathbb{C}$ is chosen to ensure that $G(k)$ is bounded as $k\to\infty$:
\begin{equation}\label{fsomega}
\omega=i\frac
{\int_{B}\frac{2\ln\delta(\zeta,k_0)}{\gamma(\zeta)}\,d\zeta
+\int_{\hat\gamma_{21}}
\ln\left[\frac{\delta^2(\zeta,k_0)}{r_1(\zeta)}\right]
\frac{d\zeta}{\gamma(\zeta)}
+\int_{\hat\gamma_{31}}
\ln[\delta^2(\zeta,k_0)r_2(\zeta)]
\frac{d\zeta}{\gamma(\zeta)}}
{\int_{\hat\gamma_{21}\cup\hat\gamma_{31}}\frac{d\zeta}{\gamma(\zeta)}}.
\end{equation}
This implies that $G(k)$ has the following behavior for the large $k$:
\begin{equation}
G(k,k_0,\alpha)=e^{iG_{\infty}(k_0,\alpha)}+O(k^{-1}),\quad k\to\infty,
\end{equation}
where the \textit{complex} constant $G_{\infty}(k_0,\alpha)$ has the form
\begin{align}
\nonumber
G_{\infty}(k_0,\alpha)=-\frac{1}{2\pi}\left(
\int_{B}\frac{2\ln\delta(\zeta,k_0)}{\gamma(\zeta)}\zeta\,d\zeta
+\int_{\hat\gamma_{21}}
\frac{\ln\frac{\delta^2(\zeta,k_0)}{r_1(\zeta)}+i\omega}
{\gamma(\zeta)}\zeta\,d\zeta\right.\\
\label{fsGinfty}
\left.+\int_{\hat\gamma_{31}}
\frac{\ln[\delta^2(\zeta,k_0)r_2(\zeta)]+i\omega}
{\gamma(\zeta)}\zeta\,d\zeta
\right).
\end{align}
Finally, arguing as in Remark \ref{fsboundf} we conclude that $G(k,k_0,\alpha)$ given by (\ref{fsintG}) satisfies (\ref{fsGbound}).
\begin{remark}
	In the case of the classical (local) NLS equation, the reflection coefficients $r_1(k)$ and $r_2(k)$ have symmetries for $k\in\mathbb{R}$.
	Since the symmetry axis of the contour of the RH problem (\ref{fsGRHall}) for $G(k,k_0,\alpha)$ is the real line, in the case of the local NLS equation the constants $\omega$ and $G_{\infty}(k_0,\alpha)$ are purely real (cf. $\omega$ and $g_{\infty}$ given by (5.44) and (5.47) respectively in \cite{BM17}).
	In our case, however, the spectral data have symmetries with respect to the imaginary axis (see (\ref{fsabsym})), which leads to, in general, \textit{complex} valued constants $\omega$ and $G_{\infty}(k_0,\alpha)$.
	Therefore, the large-time asymptotics of $q(x,t)$ depends on the initial data and, as in the plane wave region, the effect of modulation instability is  non-universal in the modulated elliptic wave regions.
\end{remark}
\begin{remark}
	In view of lack of symmetry between $r_1(k)$ and $r_2(k)$,
	the function $G(k,k_0,\alpha)$ can have a singularity as $k$ approaches $k_0$, 
	which is in contrast with the case of the classical NLS equation.
	Indeed, considering the behavior of contour integrals at the endpoints with logarithmic singularity (see Chapter I, Paragraph 8.5 in \cite{G66}) and using (\ref{fsdelta-singular}) with $k_0$ instead of $k_1$ we have that
	\begin{subequations}
	\begin{align}
	\nonumber
	&\frac{1}{2\pi i}
	\int_{\hat\gamma_{21}}
	\frac{\ln\frac{\delta^2(\zeta,k_0)}{r_1(\zeta)}}
	{\gamma(\zeta)(\zeta-k)}\,d\zeta=
	\frac{\ln r_1(k_0)-2\chi(k_0,k_0)-2\pi\nu(k_0)}{2\pi i\gamma_-(k_0)}
	\ln(k-k_0)\\
	& \qquad\qquad\qquad\qquad\qquad
	-\frac{\nu(k_0)}{2\pi\gamma_-(k_0)}
	\ln^2(k-k_0)+I_{k_0,1},\\
	\nonumber
	&\frac{1}{2\pi i}
	\int_{\hat\gamma_{31}}
	\frac{\ln[\delta^2(\zeta,k_0)r_2(\zeta)]}
	{\gamma(\zeta)(\zeta-k)}\,d\zeta=
	\frac{\ln r_2(k_0)+2\chi(k_0,k_0)+2\pi\nu(k_0)}{2\pi i\gamma_-(k_0)}
	\ln(k-k_0)\\
	& \qquad\qquad\qquad\qquad\qquad\qquad
	+\frac{\nu(k_0)}{2\pi\gamma_-(k_0)}
	\ln^2(k-k_0)+I_{k_0,2},
	\end{align}
	\end{subequations}
	where $I_{k_0,j}$, $j=1,2$ are analytic in a neighborhood of $k=k_0$. Consequently we have that 
	\begin{equation}
	G(k,k_0,\alpha)(k-k_0)^{\frac{\gamma(k)}{2\pi i\gamma_-(k_0)}
	\ln r_1(k_0)r_2(k_0)}=O(1), \quad k\to k_0.
	\end{equation}
	Despite the  possible strong singularity of $G(k,k_0,\alpha)$ (and, consequently, $M^{(5)}(x,t,k)$, see (\ref{fsM5}) and (\ref{fsM5RHd}) below) at $k=k_0$, we are able to construct the parametrix and make correct asymptotic estimates under the assumption (similar to the plane wave case) about smallness of $\arg(1+r_1(k)r_2(k))$, see Theorem \ref{fsth2} and Appendix C below.
\end{remark}
Now we are in a  position to make the final transformation of the Riemann-Hilbert problem:
\begin{equation}\label{fsM5}
M^{(5)}(x,t,k)=e^{-iG_{\infty}(k_0,\alpha)\sigma_3}
M^{(4)}(x,t,k)G^{\sigma_3}(k,k_0,\alpha),\quad
k\in\mathbb{C}\setminus
\left\{\tilde\Gamma\cup \overline{B}\right\}.
\end{equation}
Then $M^{(5)}(x,t,k)$ solves the RH problem with a constant jump along the contours $B$, $\hat\gamma_{21}$ and $\hat\gamma_{31}$:
\begin{subequations}\label{fsM5RH}
	\begin{align}
	&M^{(5)}_+(x,t,k)=M^{(5)}_-(x,t,k)J^{(5)}(x,t,k),&&k\in\tilde\Gamma\cup B,\\
	&M^{(5)}(x,t,k)=I+O(k^{-1}), &&k\to\infty,\\
	&M^{(5)}(x,t,k)=O\left((k\pm iA)^{-\frac{1}{4}}\right),&& k\to\mp iA,\\
	\label{fsM5RHd}
	&M^{(5)}(x,t,k)
	\begin{pmatrix}
	(k-k_0)^{P(k)}& 0\\
	0& 
	(k-k_0)^{-P(k)}
	\end{pmatrix}
	=O\left((k-k_0)^{-\frac{1}{2}+\varepsilon(k_0)}\right),&& k\to k_0,\,\varepsilon(k_0)>0,
	\end{align}
\end{subequations}
where 
$P(k)=\frac{\gamma(k)\arg(r_1(k_0)r_2(k_0))}
{2\pi\gamma_-(k_0)}$ 
and (recall that $\Omega\in\mathbb{R}$ is given by (\ref{fsOmega}))
\begin{equation}
\label{fsJ5ew}
J^{(5)}(x,t,k)=
\begin{cases}
\begin{pmatrix}
0& -e^{-it\Omega-i\omega}\\
e^{it\Omega+i\omega}&0
\end{pmatrix},&k\in\hat\gamma_{21}\cup\hat\gamma_{31},\\
i\sigma_1,&k\in B,\\
G^{-\sigma_3}(k,k_0,\alpha)J^{(4)}(x,t,k)G^{\sigma_3}(k,k_0,\alpha),&\text{othewise}.
\end{cases}
\end{equation}
The solution $q(x,t)$ can be found via $M^{(5)}(x,t,k)$ as follows:
\begin{subequations}\label{fsasolM5}
	\begin{align}\label{fssolM5}
	&q(x,t)=2ie^{2itH_\infty+2iG_{\infty}(k_0,\alpha)}
	\lim_{k\to\infty}kM_{12}^{(5)}(x,t,k),&& x>0,\\
	\label{fssol1M5}
	&q(-x,t)=-2ie^{2itH_\infty+2i\overline{G_{\infty}(k_0,\alpha)}}
	\lim_{k\to\infty}k\overline{M_{21}^{(5)}(x,t,k)},&& x>0.
	\end{align}
\end{subequations}

\begin{theorem}(Elliptic wave region)
	
	\label{fsth2}
	Suppose that the initial data $q_0(x)$ is a compact perturbation of the background (\ref{fsivp-c}) and that the associated spectral functions $a_j(k)$ and $r_j(k)=\frac{b_j(k)}{a_j(k)}$, $j=1,2$ satisfy the following conditions:
	\begin{enumerate}
		\item $a_1(k)$ and $a_2(k)$ have no zeros in $\overline{\mathbb{C}^{+}}$ and $\overline{\mathbb{C}^{-}}$ respectively,
		\item $\int_{-\infty}^{k}d\arg(1+ r_1(\zeta)r_2(\zeta))\in(-\pi,\pi)$, for all $k\in\left(-\frac{A}{\sqrt{2}},0\right)$.
	\end{enumerate}
	
	Then the solution $q(x,t)$ 
	of problem (\ref{fsivp})
	has the following large-time asymptotics along the rays $\frac{x}{4t}=const$ uniformly in any compact subset of $\{\frac{x}{4t}\in\mathbb{R}:\sqrt{2}A<|\frac{x}{4t}|<0\}$:
	\begin{subequations}\label{fsasellw}
	\begin{align}
	\nonumber
	&q(x,t)=(A+\Im\alpha)
	e^{-2\Im G_{\infty}(k_0,\alpha)}
	\frac{\Theta(\frac{\Omega t}{2\pi}
	+\frac{\omega}{2\pi}-\frac{1}{4}-v_{\infty}+c)
	\Theta(v_{\infty}+c)}
	{\Theta(\frac{\Omega t}{2\pi}
	+\frac{\omega}{2\pi}-\frac{1}{4}+v_{\infty}+c)
	\Theta(-v_{\infty}+c)}\\
	&\qquad\qquad
	\times e^{2i(tH_{\infty}+\Re G_{\infty}(k_0,\alpha))}
	+E_3(x,t),\quad 0<\frac{x}{4t}<\sqrt{2}A,\\
	\nonumber
	&q(-x,t)=(A+\Im\alpha)
	e^{2\Im G_{\infty}(k_0,\alpha)}
	\frac{\Theta(\frac{\Omega t}{2\pi}
	+\frac{\overline{\omega}}{2\pi}-\frac{1}{4}
	+\overline{v_{\infty}}-\overline{c})
	\Theta(-\overline{v_{\infty}}-\overline{c})}
	{\Theta(\frac{\Omega t}{2\pi}
	+\frac{\overline{\omega}}{2\pi}-\frac{1}{4}
	-\overline{v_{\infty}}-\overline{c})
	\Theta(\overline{v_{\infty}}-\overline{c})}\\
	&\qquad\qquad
	\times e^{2i(tH_{\infty}+\Re G_{\infty}(k_0,\alpha))}
	+E_4(x,t),\quad 0>-\frac{x}{4t}>-\sqrt{2}A.
	\end{align}
	\end{subequations}
	Here the genus-1 theta function $\Theta$ is given by (\ref{fsg1thf}), the constants $\alpha$, $\Omega$, $v_{\infty}$ and $c$ which  do not depend on the initial data $q_0(x)$ are given by (\ref{fsreaima}), (\ref{fsOmega}), (\ref{fsvinfty}) and (\ref{fsc}) respectively. Moreover, depended on the
	associated to the initial value spectral data constants $\omega$ and $H_{\infty}$ and function $G_{\infty}(k_0,\alpha)$ are given by (\ref{fsomega}), (\ref{fsH0}) and (\ref{fsGinfty}) respectively.
	
	Depending on the value of $\Im\nu(k_0)$, the decaying terms $E_3(x,t)$ and $E_4(x,t)$ are given by 
	\begin{description}
		\item[(a)] if $\Im\nu(k_0)\in\left(-\frac{1}{2},-\frac{1}{6}\right]$, then
		\begin{align*}
		&E_3(x,t)=t^{-\frac{1}{2}-\Im\nu(k_0)}c_5(x,t)+R(x,t),\\
		&E_4(x,t)=t^{-\frac{1}{2}-\Im\nu(k_0)}c_7(x,t)+R(x,t),
		\end{align*}
		\item[(b)] if $\Im\nu(k_0)\in\left(-\frac{1}{6},\frac{1}{6}\right)$, then
		\begin{align*}
		&E_3(x,t)=t^{-\frac{1}{2}+\Im\nu(k_0)}c_6(x,t)+ 
		t^{-\frac{1}{2}-\Im\nu(k_0)}c_5(x,t)+R(x,t),\\
		&E_4(x,t)=t^{-\frac{1}{2}+\Im\nu(k_0)}c_8(x,t)+ 
		t^{-\frac{1}{2}-\Im\nu(k_0)}c_7(x,t)+R(x,t),
		\end{align*}
		\item[(c)] if $\Im\nu(k_0)\in\left[\frac{1}{6},\frac{1}{2}\right)$, then
		\begin{align*}
		&E_3(x,t)=t^{-\frac{1}{2}+\Im\nu(k_0)}c_6(x,t)+R(x,t),\\
		&E_4(x,t)=t^{-\frac{1}{2}+\Im\nu(k_0)}c_8(x,t)+R(x,t).
		\end{align*}
	\end{description}
	The oscillating terms $c_j(x,t)$, $j=\overline{5,8}$ have the following form
	\begin{align}
	&c_j(x,t)=2ie^{2itH_{\infty}+2iG_{\infty}(k_0,\alpha)}
	\tilde{c}_j(x,t),\quad j=5,6,\\
	&c_j(x,t)=-2ie^{2itH_{\infty}
	+2i\overline{G_{\infty}(k_0,\alpha)}}
	\overline{\tilde{c}_j(x,t)},\quad j=7,8,
	\end{align}
	where $\tilde{c}_j(x,t)$, $j=\overline{5,8}$ and the remainder $R(x,t)$ are given by (\ref{fstck0}) and (\ref{fsR3}) respectively.
\end{theorem}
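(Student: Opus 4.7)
The plan is to analyze the RH problem (\ref{fsM5RH}) for $M^{(5)}$ obtained at the end of the preceding subsection. On $\tilde\Gamma\setminus(\hat\gamma_{21}\cup\hat\gamma_{31})$ the jump matrix $J^{(5)}$ decays exponentially to $I$ as $t\to\infty$ (by construction of $h(k)$ and Proposition \ref{fsdefh}), except on small neighborhoods of the stationary phase point $k_0$, while on $B\cup\hat\gamma_{21}\cup\hat\gamma_{31}$ the jump is constant in $k$. This suggests the standard Deift--Zhou decomposition: approximate $M^{(5)}$ by a global parametrix built from a model problem solving the constant jumps exactly, together with a local parametrix at $k_0$ (and, if needed, localized matrix functions absorbing the mild singularities at the branch points $\pm iA$, $\alpha$, $\bar\alpha$).

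First, I would construct the model problem $M^{mod}(x,t,k)$ with jump $i\sigma_1$ on $B$ and the constant anti-diagonal jump $\bigl(\begin{smallmatrix}0 & -e^{-it\Omega-i\omega}\\ e^{it\Omega+i\omega} & 0\end{smallmatrix}\bigr)$ on $\hat\gamma_{21}\cup\hat\gamma_{31}$, bounded at infinity and square-integrable at the four endpoints. This is an Abelian RH problem on the genus-1 Riemann surface $\Sigma$ of $\gamma(k)$ and is solved in closed form by ratios of Riemann theta functions associated with the canonical basis $\{\mathfrak{a},\mathfrak{b}\}$ of Figure \ref{riemann_surf}; the phase $t\Omega+\omega$ enters as a linear shift of the theta argument, and the constants $v_{\infty}$ and $c$ arise as the Abel image of $\infty^{+}$ and as a Riemann constant-type vector, respectively. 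Reading off the $(12)$ and $(21)$ entries of $M^{mod}$ at $k=\infty$ and inserting into (\ref{fsasolM5}) produces precisely the main theta-function terms in (\ref{fsasellw}), with the prefactor $A+\Im\alpha$ coming from the leading behavior of the theta ratios at $k\to\infty$ and the factors $e^{\mp 2\Im G_{\infty}}e^{2i\Re G_{\infty}}$ from the conjugation by $e^{-iG_{\infty}(k_0,\alpha)\sigma_3}G^{\sigma_3}(k,k_0,\alpha)$ in (\ref{fsM5}).

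Next, near $k_0$ I would construct a local parametrix $M^{loc}(x,t,k)$ that solves the full jump of $J^{(5)}$ on a small disk $\mathcal{D}(k_0,\varepsilon)$ and matches $M^{mod}$ on $\partial\mathcal{D}(k_0,\varepsilon)$ to leading order. After the conformal change of variable $z=2\sqrt{t|\theta''(k_0)|}\,(k-k_0)\cdot(\text{unit factor})$, the local problem reduces to the classical parabolic cylinder model whose solution is explicit; this is the same mechanism as in Theorem \ref{fsth1pw}, which is why the oscillating correction terms $\tilde c_j(x,t)$ in the statement mimic the structure of $c_j(k_1)$ (with $k_1\mapsto k_0$) and the trichotomy on $\Im\nu(k_0)\in(-\tfrac12,-\tfrac16]$, $(-\tfrac16,\tfrac16)$, $[\tfrac16,\tfrac12)$ appears. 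The main obstacle, flagged in the paper and to be handled in Appendix C(ii), is that in the elliptic wave regime the phase $h(k)$ coming from the $g$-function is only piecewise analytic across $\hat\gamma_{21}\cup\hat\gamma_{31}$ and the entries of $J^{(5)}$ involving $G^{\pm 2}(k,k_0,\alpha)$ are unbounded at $k_0$ (cf.\ the singular factor $(k-k_0)^{\pm P(k)}$ in (\ref{fsM5RHd})). Assumption (\ref{fsDelta-argew}) keeps $|\Im\nu(k_0)|<\tfrac12$ and hence makes these singularities square-integrable, so that the local model can still be solved by parabolic cylinder functions after absorbing the extra powers $(k-k_0)^{\pm P(k)}$ into a bounded conjugation.

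Finally, I would set the global parametrix to be $M^{par}=M^{mod}$ outside $\mathcal{D}(k_0,\varepsilon)$ and $M^{par}=M^{loc}$ inside, and consider $R(x,t,k):=M^{(5)}(M^{par})^{-1}$. By construction $R$ has no jump on $B\cup\hat\gamma_{21}\cup\hat\gamma_{31}$ and its jump elsewhere is $I+O(t^{-1/2})$ on $\partial\mathcal{D}(k_0,\varepsilon)$ and $I+O(e^{-ct})$ on the remainder, so $R$ solves a small-norm RH problem whose solution satisfies $R=I+O(t^{-1/2+|\Im\nu(k_0)|})$ (with the $O(t^{-1}\log t)$ borderline when $\Im\nu(k_0)=0$). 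Expanding $M^{(5)}=R\,M^{par}$ as $k\to\infty$, the $O(k^{-1})$ coefficient splits into the theta-function contribution from $M^{mod}$, the parabolic-cylinder contribution $t^{-1/2\pm\Im\nu(k_0)}\tilde c_j(x,t)$ from the matching of $M^{loc}$ on $\partial\mathcal{D}(k_0,\varepsilon)$, and the remainder $R(x,t)$. Substituting into (\ref{fssolM5})--(\ref{fssol1M5}) and using (\ref{fsH0}), (\ref{fsGinfty}), (\ref{fsomega}), (\ref{fsOmega}) yields (\ref{fsasellw}). The details of the model problem on $\Sigma$ and of the singular local parametrix at $k_0$ are deferred to Appendices B and C.
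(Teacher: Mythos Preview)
Your overall strategy matches the paper's proof in Appendix C: model problem on $B\cup\hat\gamma_{21}\cup\hat\gamma_{31}$ solved by genus-1 theta functions, a parabolic-cylinder local parametrix at $k_0$, and a small-norm argument for the remainder. Two points, however, need correction.

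First, the local parametrices at $\alpha$ and $\bar\alpha$ are not optional ``mild singularity absorbers''; they are essential and must be built from Airy functions. Near $k=\alpha$ the phase satisfies $h(k)-h(\alpha)=(k-\alpha)^{3/2}H_\alpha(k)$ (see (\ref{fshalph})), so on any fixed disk around $\alpha$ the jump $J^{(5)}$ on $\hat\gamma_{20},\hat\gamma_{22},\hat\gamma_{23}$ does \emph{not} converge uniformly to $I$; at the same time $M^{mod}(k)$ blows up like $(k-\alpha)^{-1/4}$. Consequently $M^{(5)}(M^{mod})^{-1}$ has a jump that stays bounded away from $I$ near $\alpha$, and your small-norm argument fails there. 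The paper fixes this in Appendix C, item (iii), by inserting standard Airy parametrices $M^{par}_\alpha$, $M^{par}_{\bar\alpha}$ on disks $S^\varepsilon_\alpha$, $S^\varepsilon_{\bar\alpha}$; these match $M^{mod}$ to $O(t^{-1})$ on the boundary circles and therefore contribute only to the remainder $R(x,t)$, not to the $t^{-1/2\pm\Im\nu(k_0)}$ terms.

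Second, the scaling variable near $k_0$ cannot be built from $\theta''(k_0)$: the relevant phase in $J^{(5)}$ is $h(k)$, which is only piecewise analytic across $\hat\gamma_{21}\cup\hat\gamma_{31}$. The paper first conjugates by a piecewise-constant $B(k)=e^{-ith_\pm\sigma_3}$ and a carefully chosen $\hat G(k)$ (which also absorbs the $(k-k_0)^{\pm P(k)}$ singularity of $G$) to produce an \emph{analytic} phase $h_{k_0}(k)=h(k)-h_\pm(k)$ inside $S^\varepsilon_{k_0}$, with $h_{k_0}(k_0)=0$ and $h_{k_0}''(k_0)\neq 0$; only then is the change of variable $z\sim 2\sqrt{t\,h_{k_0}''(k_0)}(k-k_0)$ and the parabolic-cylinder model applicable. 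Your sketch mentions the obstacle but the concrete mechanism---the $B(k)\hat G^{-1}(k)$ conjugation producing the analytic function $Y(x,t,k)=M^{mod}B\hat G^{-1}$ that ultimately appears in the coefficients $\tilde c_j$ of (\ref{fstck0})---is what makes the argument go through and should be stated explicitly.
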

\begin{proof}
	See Appendix C.
\end{proof}
\appendix
\begin{appendices}

\section{Appendix A}
Proof of Theorem \ref{fsth1pw}.
 
The adaptation of the nonlinear steepest decent method \cite{DZ, DIZ} needed in the plane wave region  is close to that used in \cite{BV07} (see also \cite{BM17}) for the local NLS equation. The main difference is related to 
the non-reality of $\nu(k_1)$, which significantly affects both the main terms and the estimations of the residue terms.
Hereinafter we emphasize the main steps of the proof, paying special attention to its peculiarities while referring the reader to relevant literature for details.

Recall that due to (\ref{fssolM3}) it is sufficient to consider $\xi>\sqrt{2}A$ only. Let $S_{k_1}^{\varepsilon}$ be a counterclockwise oriented circle of radius $\varepsilon>0$ centered at $k=k_1$. Define $M^{err}(x,t,k)$ as follows
\begin{equation}\label{fsMerr}
M^{err}(x,t,k)=
\begin{cases}
M^{(3)}(x,t,k)\left[M^{mod}(k)\right]^{-1}, &k\text{ outside }S_{k_1}^{\varepsilon},\\
M^{(3)}(x,t,k)\left[M^{par}(x,t,k)\right]^{-1}\left[M^{mod}(k)\right]^{-1},&
k\text{ inside }S_{k_1}^{\varepsilon},
\end{cases}
\end{equation}
where $0<\varepsilon\ll1$ is such that $S_{k_1}^{\varepsilon}$ does not intersect $B$. Matrix $M^{mod}(k)$ is the solution of the model Riemann-Hilbert problem with a constant jump matrix (RH problem for $M^{(3)}$ without ``vanishing'' jump across $\hat\Gamma$):
\begin{subequations}
	\begin{align}
	&M^{mod}_+(k)=M^{mod}_-(k)J^{mod},&& k\in B,\\
	&M^{mod}_+(k)=I+O(k^{-1}),&&k\to\infty,\\
	&M^{mod}_+(k)=O\left((k\pm iA)^{-\frac{1}{4}}\right),&& k\to\mp iA,
	\end{align}
\end{subequations}
with $J^{mod}=i\sigma_1$. This problem has the explicit
solution   (see (\ref{fsK}) and (\ref{fssymK})):
\begin{equation}
M^{mod}(k)=\mathcal{E}(k),\quad k\in\mathbb{C}\setminus\overline{B}.
\end{equation}

The matrix function
$M^{par}(x,t,k)$, which is an appropriate approximation (as $t\to\infty$) of the RH problem stated solely on the cross $\hat\Gamma$,  can be constructed explicitly in terms of the parabolic cylinder functions \cite{I1}.
Indeed, let's expand the phase function $\theta(k,\xi)$ in a neighborhood of the stationary phase point $k=k_1$ \cite{BV07}:
\begin{equation}\label{fsths}
\theta(k,\xi)=\theta(k_1,\xi)+\sum\limits_{n=2}^{\infty}\theta_n(k-k_1)^n,
\quad \theta_n\equiv\theta_n(k_1,\xi)=\frac{\theta^{(n)}(k_1,\xi)}{n!}.
\end{equation}
Then introduce the scaled spectral parameter $z$:
\begin{equation}
z=2\sqrt{t}(k-k_1)\left(
\sum\limits_{n=2}^{\infty}\theta_n(k-k_1)^{n-2}\right)^{1/2}\equiv
\sqrt{t}\sum\limits_{n=1}^{\infty}\alpha_n(k-k_1)^n,
\end{equation}
and write the phase function in terms of the new variables (cf. \cite{DIZ, RS}):
\begin{equation}\label{fsthz}
\theta(k(z),\xi)=\theta(k_1,\xi)+\frac{z^2}{4t}.
\end{equation}
Since $\theta_2=\frac{4k_1+2\xi}{f(k_1)}>0$ in the plane wave case, we have $\alpha_1=2\sqrt\theta_2\neq0$ and thus, by the inverse function theorem, we can write $k-k_1$ as a series in $z/\sqrt{t}$:
\begin{equation}\label{fskk1}
k-k_1=\sum\limits_{n=1}^{\infty}\beta_n\left(\frac{z}{\sqrt{t}}\right)^n,\quad
k\text{ inside }S_{k_1}^{\varepsilon},
\end{equation}
where $\beta_n\in\mathbb{C}$, $n\in\mathbb{N}$ can be found recursively in terms of $\alpha_n$, $n\in\mathbb{N}$ starting from
\begin{equation}
\beta_1=1/\alpha_1\equiv \frac{1}{2}\sqrt{\frac{f(k_1)}{4k_1+2\xi}}.
\end{equation}
Define $\tilde{r}_j(k)$, $j=1,2$ by
\begin{equation}\label{fsrjtilde}
\tilde{r}_1(k)=r_1(k)F^2(k,k_1),\quad
\tilde{r}_2(k)=r_2(k)F^{-2}(k,k_1).
\end{equation}
Now using the following approximations of the functions $\tilde{r}_j(k)$, $\delta(k,k_1)$ near $k=k_1$ involved in $J^{(3)}(x,t,k)$ on the cross $\hat\Gamma$:
\begin{equation}
\tilde{r}_j(k(z))\approx \tilde{r}_j(k_1),\quad j=1,2,\qquad
\delta(k(z),k_1)\approx \beta_1^{i\nu(k_1)}e^{\chi(k_1,k_1)}t^{-\frac{i\nu(k_1)}{2}}z^{i\nu(k_1)},
\end{equation}
define $M^{par}$ as follows (cf. (3.19) in \cite{RS}):
\begin{equation}\label{fsMpar}
M^{par}(x,t,k)=\Delta_0(k_1,t)
m^{\Gamma^0_{k_1}}(k_1,z(k))\Delta_0^{-1}(k_1,t),
\end{equation}
where $\Delta_0(k_1,t)=
\beta_1^{i\nu(k_1)\sigma_3}t^{-\frac{i\nu(k_1)}{2}\sigma_3}
e^{\chi(k_1,k_1)\sigma_3-it\theta(k_1,\xi)\sigma_3}
$
(cf. $\hat\delta^0$ in \cite{BV07}) and $m^{\Gamma^0_{k_1}}(k_1,z)$ satisfies the RH problem on the cross (see (3.17), (3.18) in \cite{RS}, where instead of $r_j(-\xi)$, $j=1,2$, and $\nu(-\xi)$ we have $\tilde{r}_j(k_1)$, $j=1,2$, and $\nu(k_1)$ respectively).
The latter, in turn, can be determined in terms of the explicitly solvable problem for $m_0(k_1,z)$ as follows:
\begin{equation}\label{fsm-g-0}
m^{\Gamma^0_{k_1}}(k_1,z) = m_0(k_1,z) D^{-1}_{j}(k_1,z),\quad z\in\Omega_j,\,\,j=0,\ldots,4,
\end{equation}
where domains $\Omega_j$ are shown in Figure \ref{mod2}
\begin{figure}[h]
	\begin{minipage}[h]{0.99\linewidth}
		\centering{\includegraphics[width=0.8\linewidth]{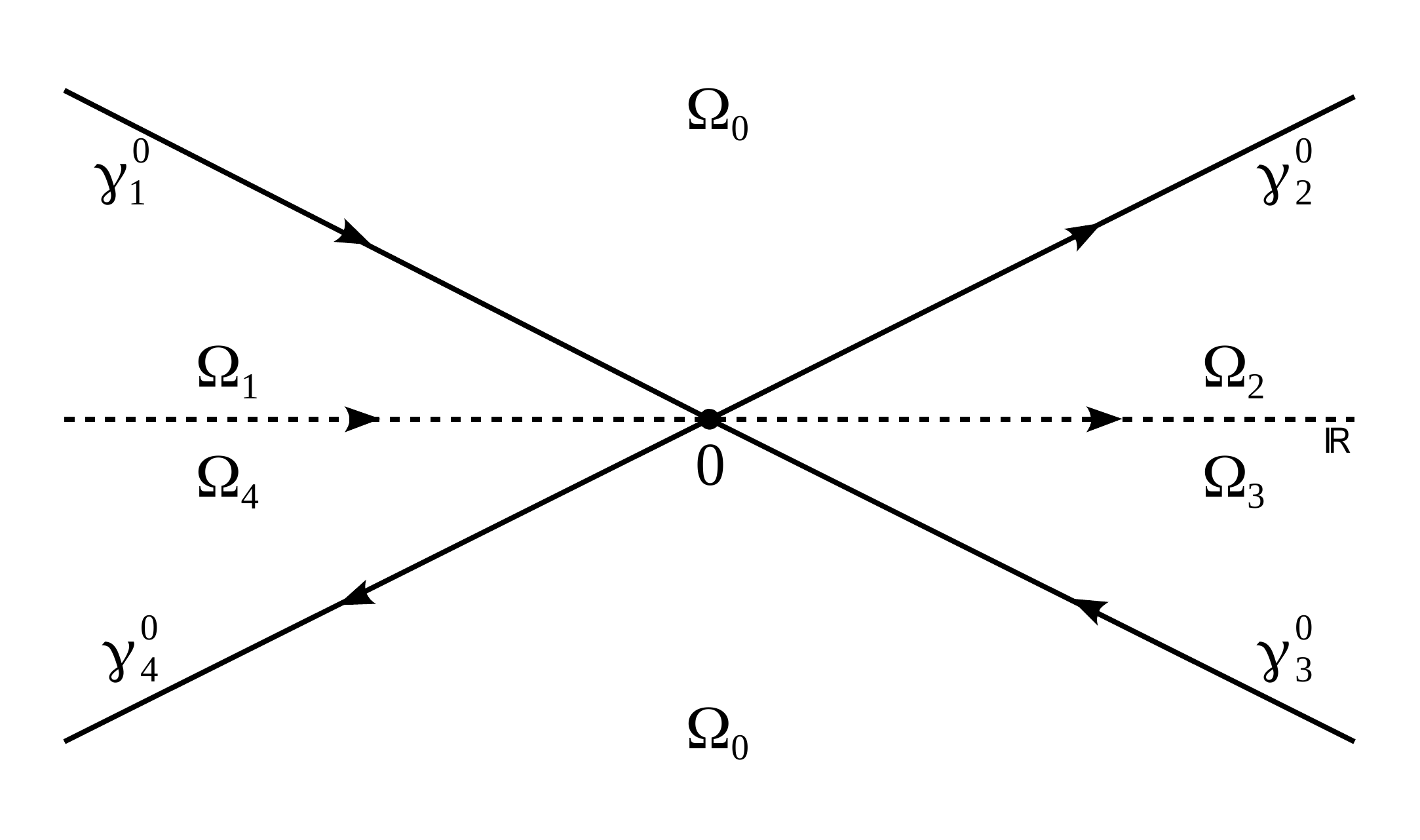}}
		\caption{}
		\textit{Domains $\Omega_j$, $j=\overline{0,4}$ in $z$-plane and contour $\Gamma^0_{k_1}=\gamma_1^0\cup...\cup\gamma_4^0$.}
		\label{mod2}
	\end{minipage}
\end{figure} 
and matrices $D_j$ have the form (cf. \cite{RS})
\begin{equation}\label{fsD0}
D_0(k_1,z)=e^{-i\frac{z^2}{4}\sigma_3}z^{i\nu(k_1)\sigma_3},
\end{equation}
and
\begin{equation}
\nonumber
\begin{matrix}
D_1(k_1,z)=D_0(k_1,z)
\begin{pmatrix}
1& \frac{\tilde r_2(k_1)}{1+\tilde r_1(k_1)\tilde r_2(k_1)}\\
0& 1\\
\end{pmatrix},
&&
D_2(k_1,z)=D_0(k_1,z)
\begin{pmatrix}
1& 0\\
\tilde r_1(k_1)& 1\\
\end{pmatrix},\\
D_3(k_1,z)=D_0(k_1,z)
\begin{pmatrix}
1& -\tilde r_2(k_1)\\
0& 1\\
\end{pmatrix},
&&
D_4(k_1,z)=D_0(k_1,z)
\begin{pmatrix}
1& 0\\
\frac{-\tilde r_1(k_1)}{1+\tilde r_1(k_1)\tilde r_2(k_1)}& 1\\
\end{pmatrix}.
\end{matrix}
\end{equation}
The RH problem for $m_0(k_1,z)$ with a constant jump matrix has the form:
\begin{subequations}\label{fsm0}
\begin{align}
&m_{0+}(k_1,z)=m_{0-}(k_1,z)j_0(k_1),&&z\in\mathbb{R},\\
&m_0(k_1,z)= \left(I+O(1/z)\right)
e^{-i\frac{z^2}{4}\sigma_3}z^{i\nu(k_1)\sigma_3},&&z\rightarrow\infty,
\end{align}
\end{subequations}
where
\begin{equation}\label{fsj0}
j_0(k_1)=
\begin{pmatrix}
1+\tilde{r}_1(k_1)\tilde{r}_2(k_1) & \tilde{r}_2(k_1)\\
\tilde{r}_1(k_1) & 1
\end{pmatrix},
\end{equation}
which can be solved explicitly in terms of the parabolic cylinder functions \cite{I1} (see also, e.g., Appendix A in \cite{BLS20, RS}).

For the derivation of the asymptotic formulas in Theorem \ref{fsth1pw},
 we give the large-$z$ behavior of $m^{\Gamma^0_{k_1}}(k_1,z)$ \cite{RS}:
\begin{equation}\label{fsmgamma}
m^{\Gamma^0_{k_1}}(k_1, z) = I + \frac{i}{z}\begin{pmatrix}
0 & \beta(k_1) \\ -\gamma(k_1) & 0
\end{pmatrix} + O(z^{-2}), \quad z\to\infty,
\end{equation}
where 
\begin{equation}
\label{fsbeta}
\beta(k_1)=\dfrac{\sqrt{2\pi}e^{-\frac{\pi}{2}\nu(k_1)}e^{-\frac{3\pi i}{4}}}{\tilde{r}_1(k_1)\Gamma(-i\nu(k_1))},
\end{equation}
\begin{equation}
\label{fsgamma}
\gamma(k_1)=\dfrac{\sqrt{2\pi}e^{-\frac{\pi}{2}\nu(k_1)}e^{-\frac{\pi i}{4}}}{\tilde{r}_2(k_1)\Gamma(i\nu(k_1))}.
\end{equation}

Having constructed all functions involved in (\ref{fsMerr}), let us formulate the RH problem for $M^{err}(x,t,k)$:
\begin{subequations}
	\begin{align}
	&M^{err}_+(x,t,k)=M^{err}_-(x,t,k)J^{err}(x,t,k),&& k\in \hat\Gamma_1,\\
	&M^{err}_+(x,t,k)=I+O(k^{-1}),&&k\to\infty,
	\end{align}
\end{subequations}
where $\hat\Gamma_1=\hat\Gamma\cup S_{k_1}^{\varepsilon}$ and
\begin{equation}
J^{err}=
\begin{cases}
M^{mod}(k)J^{(3)}(x,t,k)\left[M^{mod}(k)\right]^{-1},&k\in\hat\Gamma,\,
k\text{ outside } S_{k_1}^{\varepsilon},\\
M^{mod}(k)\left[M^{par}(x,t,k)\right]^{-1}\left[M^{mod}(k)\right]^{-1},& k\in S_{k_1}^{\varepsilon},\\
M^{mod}(k)M_-^{par}(x,t,k)J^{(3)}(x,t,k)\left[M_+^{par}(x,t,k)\right]^{-1}
\left[M^{mod}(k)\right]^{-1},&k\in\hat\Gamma,\,
k\text{ inside } S_{k_1}^{\varepsilon}.
\end{cases}
\end{equation}
The solution $q(x,t)$ can be obtained from the large-$k$ behavior of $M^{err}$ as follows:
\begin{subequations}\label{fsasolMerr}
	\begin{align}\label{fssolMerr}
	&q(x,t)=Ae^{2iA^2t+2iF_{\infty}(k_1)}+
	2ie^{2iA^2t+2iF_\infty(k_1)}\lim_{k\to\infty}kM_{12}^{err}(x,t,k),&& x>0,\\
	\label{fssol1Merr}
	&q(-x,t)=Ae^{2iA^2t+2i\overline{F_{\infty}(k_1)}}
	-2ie^{2iA^2t+2i\overline{F_\infty(k_1)}}\lim_{k\to\infty}k\overline{M_{21}^{err}(x,t,k)},&& x>0.
	\end{align}
\end{subequations}

In order to estimate $M^{err}(x,t,k)$ as $t\to\infty$, we can argue  as in Section III.C in \cite{RS}, which leads us to the following estimates for $(J^{err}(x,t,k)-I)$ as $t\to\infty$:
\begin{align}
\nonumber
\|J^{err}(x,t,\cdot)-I\|_{L^2(\hat\Gamma_1)}&=M^{mod}(k)
\begin{pmatrix}
O\left(t^{-\frac{1}{2}-\Im\nu(k_1)}\right)& 
O\left(t^{-\frac{1}{2}+\Im\nu(k_1)}\right)\\
O\left(t^{-\frac{1}{2}-\Im\nu(k_1)}\right)&
O\left(t^{-\frac{1}{2}+\Im\nu(k_1)}\right)
\end{pmatrix}
\left[M^{mod}(k)\right]^{-1}\\
&=O\left(t^{-\frac{1}{2}+|\Im\nu(k_1)|}\right),\quad t\to\infty,
\end{align}
and
\begin{align}
\nonumber
\|J^{err}(x,t,\cdot)-I\|_{L^1(\hat\Gamma)}&=M^{mod}(k)
\begin{pmatrix}
O\left(t^{-1-\Im\nu(k_1)}\right)& 
O\left(t^{-1+\Im\nu(k_1)}\right)\\
O\left(t^{-1-\Im\nu(k_1)}\right)&
O\left(t^{-1+\Im\nu(k_1)}\right)
\end{pmatrix}
\left[M^{mod}(k)\right]^{-1}\\
&=O\left(t^{-1+|\Im\nu(k_1)|}\right),\quad t\to\infty.
\end{align}

The estimates for $\|J^{err}(x,t,\cdot)-I\|_{L^{\infty}(\hat\Gamma_1)}$ and 
$\|\mu^{err}(x,t,k)-I\|_{L^2(\hat\Gamma_1)}$  are the same as in the case of the decaying problem for the NNLS equation \cite{RS} (here $\mu^{err}$ is the solution of the associated singular integral equation, cf. $\mu(\xi,t,k)$ in \cite{RS}). Therefore we eventually arrive at (see (3.30) and (3.34) in \cite{RS})
\begin{equation}\label{fsasMerr}
\lim\limits_{k\to\infty} k(M^{err}(x,t,k)-I)=-\frac{1}{2\pi i}
\int\limits_{|k-k_1|=\varepsilon}M^{mod}(k)\left(\left[M^{par}(x,t,k)\right]^{-1}
-I\right)\left[M^{mod}(k)\right]^{-1}\,dk+\hat{R}(x,t).
\end{equation}
where the error matrix $\hat{R}(x,t)$ has the structure 
\begin{equation}
	\hat{R}(x,t)=
	\begin{pmatrix}
	R(x,t) & R(x,t)\\
	R(x,t) & R(x,t)
	\end{pmatrix},
\end{equation}
with $R(x,t)$ given by (\ref{fsR3}). 
From (\ref{fsMpar}) and (\ref{fsmgamma}) we have
(cf. $\tilde{m}_0^{-1}(\xi,t,k)$ in \cite{RS})
\begin{equation}\label{fsMparas}
\left[M^{par}(x,t,k)\right]^{-1} = \Delta_0(k_1,t)
\left[m^{\Gamma^0_{k_1}}(x, \beta_1^{-1}\sqrt{t}(k-k_1))\right]^{-1}
\Delta_0^{-1}(k_1,t) = I +
\frac{\beta_1B(x,t)}{\sqrt{t}(k-k_1)} + \tilde{R}(x,t),
\end{equation}
where $\tilde{R}^{[j]}(x,t)=O\left(t^{-1+(-1)^j\Im \nu(k_1)}\right)$, $t\to\infty$, $j=1,2$ and (cf. (3.32) in \cite{RS})
\begin{equation}\label{fsB}
B(x,t)=\begin{pmatrix}
0 & -i\beta(k_1)e^{-2it\theta(k_1,\xi) + 2\chi(k_1,k_1)}\left(\frac{t}{\beta_1^2}\right)^{-i\nu(k_1)} \\
i\gamma(k_1)e^{2it\theta(k_1,\xi) - 2\chi(k_1,k_1)}\left(\frac{t}{\beta_1^2}\right)^{i\nu(k_1)} & 0
\end{pmatrix}.
\end{equation}
Combining (\ref{fsasMerr}) and (\ref{fsMparas}) one obtains
\begin{equation}\label{kinfty}
\lim\limits_{k\to\infty}k\left(M^{err}(x,t,k)-I\right)=
\tilde{B}(x,t)+\hat{R}(x,t),
\end{equation}
with
\begin{subequations} \label{Btilde}
	\begin{align}
	&\tilde{B}_{11}(x,t)=\frac{\beta_1}{4\sqrt{t}}
	\left(
	\left(w^2(k_1)-\frac{1}{w^2(k_1)}\right)B_{12}(x,t)
	-\left(w^2(k_1)-\frac{1}{w^2(k_1)}\right)B_{21}(x,t)
	\right),\\
	&\tilde{B}_{12}(x,t)=\frac{\beta_1}{4\sqrt{t}}
	\left(
	\left(w(k_1)-\frac{1}{w(k_1)}\right)^2B_{21}(x,t)
	-\left(w(k_1)+\frac{1}{w(k_1)}\right)^2B_{12}(x,t)
	\right),\\
	&\tilde{B}_{21}(x,t)=\frac{\beta_1}{4\sqrt{t}}
	\left(
	\left(w(k_1)-\frac{1}{w(k_1)}\right)^2B_{12}(x,t)
	-\left(w(k_1)+\frac{1}{w(k_1)}\right)^2B_{21}(x,t)
	\right),\\
	&\tilde{B}_{22}(x,t)=-\tilde{B}_{11}(x,t),
	\end{align}
	where we have used the standard notations for the matrix entries of $\tilde{B}(x,t)$ and $B(x,t)$ (recall that $w(k)$ is given by (\ref{fsK})).
	Finally, substituting (\ref{Btilde}) into (\ref{kinfty}) and using (\ref{fssolMerr}) and (\ref{fssol1Merr}) we arrive at (\ref{fssolpw1}) and (\ref{fssolpw2}) respectively.
\end{subequations}

\section{Appendix B}
Proof of Proposition \ref{fsdefh}.

Let us write the Abelian differential $dh(k)$ in an equivalent form:
\begin{equation}
dh(k)=4\frac{k^3+c_2k^2+c_1k+c_0}{\gamma(k)}dk,
\end{equation}
where
\begin{equation}\label{fsc0c1c2}
c_0=-k_0|\alpha|^2,\quad c_1=|\alpha|^2+2k_0\Re\alpha,\quad
c_2=-k_0-2\Re\alpha.
\end{equation}
\textbf{(i)} Taking into account that $\left(\int_{iA}^k\,dh(k)\right)_+=
-\left(\int_{iA}^k\,dh(k)\right)_-$ for $k\in B$, one obtains:
\begin{equation}
h_+(k)+h_-(k)=\int_B\,dh(k),\quad k\in B.
\end{equation}
We normalize the Abel integral $h(k)$ so that its $\mathfrak{b}$-period vanishes:
\begin{equation}\label{fsb-period}
\int_B\,dh(k)=\frac{1}{2}\int_{\mathfrak{b}}dh(k)=0,
\end{equation}
which determines one of the three parameters:
\begin{equation}\label{fsc0int}
c_0=-\frac{\int_B(k^3+c_2k^2+c_1k)\frac{dk}{\gamma(k)}}
{\int_B\frac{dk}{\gamma(k)}}.
\end{equation}
\textbf{(ii)} The jump condition (\ref{fshjumpa}) follows from
$\left(\int_{\alpha}^k\,dh(k)\right)_+=
-\left(\int_{\alpha}^k\,dh(k)\right)_-$ for $k\in\hat\gamma_{21}\cup\hat\gamma_{31}$, 
$\int_{iA}^{\alpha}\,dh(k)=\int_{-iA}^{\overline{\alpha}}\,dh(k)
=-\frac{1}{2}\int_{\mathfrak{a}}\,dh(k)$, and (\ref{fsb-period}). Therefore, this condition on $h(k)$ does not impose additional restrictions on the parameters.
\\
\textbf{(iii)} We start with the analysis of $\Im h(k)$ as $k\to\infty$. From the large-$k$ behavior of $\gamma(k)$
\begin{equation}
\gamma(k)=k^2\left(1-\frac{\Re\alpha}{k}
+\frac{A^2+\Im^2\alpha}{2k^2}+O\left(k^{-3}\right)\right),\quad k\to\infty,
\end{equation}
it follows that 
\begin{equation}
\frac{dh(k)}{dk}=4k+h_0+\frac{h_{-1}}{k}+O\left(k^{-2}\right),\quad k\to\infty,
\end{equation}
where
\begin{equation}
h_0=4(c_2+\Re\alpha),\quad
h_{-1}=4c_1-2A^2-2\Im^2\alpha+h_0\Re\alpha.
\end{equation}
According to the expansion of $\frac{d\theta}{dk}$ as $k\to\infty$ (see (\ref{fsthetakinf})),
we set $h_0=4\xi$ and $h_{-1}=0$, which determines $c_1$ and $c_2$:
\begin{equation}\label{fsc1c2}
c_1=(A^2+\Im^2\alpha)/2-\xi\Re\alpha,\quad
c_2=\xi-\Re\alpha.
\end{equation}
Taking into account that
$$
2\left(\int_{iA}^{k}+\int_{-iA}^{k}\right)(z+\xi)\,dz=2k^2+4\xi k+2A^2,
$$
and expressing $h(k)$ in the form
\begin{equation}
h(k)=2\left(\int_{iA}^{k}+\int_{-iA}^{k}\right)\left[
\frac{z^3+c_2z^2+c_1z+c_0}{\gamma(z)}-
(z+\xi)\right]\,dz+2k^2+4\xi k+2A^2,
\end{equation}
we arrive at (\ref{fshkinf}) with $H_\infty$ given by (\ref{fsH0}).
Combining (\ref{fsc1c2}) and (\ref{fsc0c1c2}) we obtain (\ref{fsreaima}), whereas substituting $c_0=-k_0|\alpha|^2$ into (\ref{fsc0int}) with $\alpha$ given by (\ref{fsreaima}) we arrive at the integral equation (\ref{fsintk0}) for $k_0$. The existence and uniqueness of the solution $k_0=k_0(\xi)$, $k_0\in(-\xi/2,0)$, follows from the implicit function theorem \cite{BKS11} (see also Lemma 5.4 in \cite{BM17}).

At this point we must verify that $\Im h(k)$ has the same signature structure as $\theta(k,\xi)$ in the neighborhood of the points $k=\alpha$, $k=\overline{\alpha}$ and $k=0$. First, let us consider the signature of $\Im h(k)$ in the neighborhood of $k=0$. Again, the expansions
$$
(k^2+A^2)^{\frac{1}{2}}\sim A\,\mathrm{sign}(\Re k),\quad k\to 0\quad
\text{and}\quad [(k-\alpha)(k-\overline{\alpha})]^{\frac{1}{2}}\sim|\alpha|,\quad k\to 0,
$$
imply
$$
\frac{dh}{dk}\sim-\frac{4|\alpha|k_0}{A\,\mathrm{sign}(\Re k)},\quad k\to 0.
$$
From the above expansion of $\frac{dh}{dk}$ we obtain
\begin{equation}
h(k)\sim h(0)-\frac{4|\alpha|k_0}{A\,\mathrm{sign}(\Re k)}k,\quad k\to 0.
\end{equation}
From the definition (\ref{fsh}) of $h(k)$ it follows that $h(0)\in\mathbb{R}$. Therefore, $\Im h(k)$ has the following behavior as $k\to 0$:
\begin{equation}\label{imhz}
\Im h(k)\sim-\frac{4|\alpha|k_0}{A\,\mathrm{sign}(\Re k)}\Im k,\quad k\to 0,
\end{equation}
which implies that $\Im h(k)$ has the same signature structure at the vicinity of $k=0$ as $\Im\theta(k,\xi)$.

Now we consider $k=\alpha$ (the treatment of $k=\overline{\alpha}$ is similar). Taking into account that
$$
(k^2+A^2)^{\frac{1}{2}}=(\alpha^2+A^2)^{\frac{1}{2}}
\left[
1
+O(k-\alpha)
\right],\quad k\to\alpha,
$$
$$
[(k-\alpha)(k-\overline{\alpha})]^{\frac{1}{2}}=
(\alpha-\overline{\alpha})^{\frac{1}{2}}
\left[
(k-\alpha)^{\frac{1}{2}}
+O((k-\alpha)^{\frac{3}{2}})
\right],\quad k\to\alpha,
$$
we arrive at
\begin{align*}
\frac{dh}{dk}=\frac{4(\alpha-\overline{\alpha})^{\frac{1}{2}}(\alpha-k_0)}
{(\alpha^2+A^2)^{\frac{1}{2}}}
\left[
(k-\alpha)^{\frac{1}{2}}+
+O((k-\alpha)^{\frac{3}{2}})
\right],\quad k\to\alpha.
\end{align*}
Integrating the latter expansion we obtain
\begin{equation}\label{fshalph}
h(k)=h(\alpha)+(k-\alpha)^{\frac{3}{2}}H_{\alpha}(k),
\end{equation}
where the analytic function $H_{\alpha}(k)$ has the form
\begin{equation}
H_{\alpha}(k)=\frac{8(\alpha-\overline{\alpha})^{\frac{1}{2}}(\alpha-k_0)}
{3(\alpha^2+A^2)^{\frac{1}{2}}}
\left[
1
+O(k-\alpha)
\right],\quad k\to\alpha.
\end{equation}
In order to have the sign structure of $\Im h(k)$ in the vicinity of $k=\alpha$ the same as of $\Im\theta (k,\xi)$, the curve $\Im h(k)=0$ going out from $k=\alpha$ has to have three branches. Therefore, the leading term of the expansion of $\Im h(k)$ as $k\to\alpha$ has to be $O((k-\alpha)^{\frac{3}{2}})$. In view of (\ref{fshalph}), we have to verify that $\Im h(\alpha)=0$. Observing that $h(\alpha)$ can be written in the form
\begin{equation}
h(\alpha)=\frac{1}{2}\left(\int_{iA}^{\alpha}
+\int_{-iA}^{\overline{\alpha}}\right)dh(k)
+\frac{1}{2}\int_{\overline{\alpha}}^{\alpha}dh(k),
\end{equation}
we conclude that 
\begin{equation}
\Im h(\alpha)=\frac{1}{2}\int_{\overline{\alpha}}^{\alpha}dh(k).
\end{equation}
Since $\int_{\alpha}^{\overline{\alpha}}dh(k)=
\frac{1}{2}\int_{\mathfrak{b}}dh(k)$ and $\int_{\mathfrak{b}}dh(k)=0$, we have that $\Im h(\alpha)=0$.

Notice that since $h(iA)=0$, one branch of the curve $\Im h(k)=0$ has to  connect $k=\alpha$ with $k=iA$.
Since $\Im h(k)$ has the same behavior for large $k$ as $\Im\theta(k,\xi)$ (see (\ref{fshkinf})), the second branch has to go to infinity along the asymptotic line $\Re k=-\xi$.
Therefore, taking into account the behavior of $\Im h(k)$ at $k=0$ and at $k=\infty$, one concludes that the signature of $\Im h(k)$ near $k=\alpha$ is similar to that of $\Im\theta(k,\xi)$ near $k=\alpha$.

\section{Appendix C}
Proof of Theorem \ref{fsth2}.

Despite the essential singularity of $M^{(5)}(x,t,k)$ at $k=k_0$,
we show, by adapting the nonlinear steepest decent method, that the asymptotics in the elliptic wave region can be established under the condition on the winding of the argument of $(1+r_1(k)r_2(k))$ similar to that  in the plane wave region and for the nonlocal problems with decaying \cite{RS, HFX19} and step-like \cite{RS20,RSs} boundary conditions.
An additional care is needed for treating sectionally analytic phase function $h(k)$ in the vicinity of $k=k_0$.

Since some ingredients of the analysis below can be found in the existing works, here we fix the main steps of the proof referring the reader to the relevant literature for details and paying attention to peculiarities of the analysis for the present nonlocal problem.

Define the error matrix $M^{err}(x,t,k)$ as follows (cf. \cite{BV07})
\begin{equation}\label{fsMerrell}
M^{err}(x,t,k)=
\begin{cases}
M^{(5)}(x,t,k)\left[M^{mod}(x,t,k)\right]^{-1}, &k\text{ outside }S_{k_0}^{\varepsilon}\cup S_{\alpha}^{\varepsilon}\cup 
S_{\overline{\alpha}}^{\varepsilon},\\
M^{(5)}(x,t,k)\left[M^{par}_{k_0}(x,t,k)\right]^{-1}
\left[M^{mod}(x,t,k)\right]^{-1},&
k\text{ inside }S_{k_0}^{\varepsilon},\\
M^{(5)}(x,t,k)\left[M^{par}_{\alpha}(x,t,k)\right]^{-1}
\left[M^{mod}(x,t,k)\right]^{-1},&
k\text{ inside }S_{\alpha}^{\varepsilon},\\
M^{(5)}(x,t,k)
\left[M^{par}_{\overline{\alpha}}(x,t,k)\right]^{-1}
\left[M^{mod}(x,t,k)\right]^{-1},&
k\text{ inside }S_{\overline{\alpha}}^{\varepsilon},
\end{cases}
\end{equation}
where $S_{k_0}^{\varepsilon}$, $S_{\alpha}^{\varepsilon}$ and $S_{\overline{\alpha}}^{\varepsilon}$ are the small counterclockwise oriented circles of a small radius $\varepsilon$, $0<\varepsilon\ll 1$, centered at $k=k_0$, $k=\alpha$ and $k=\overline{\alpha}$ respectively.
The construction of the error matrix $M^{err}(x,t,k)$ involves (i) the solution of the model problem $M^{mod}(k)$, (ii) the local parametrix $M^{par}_{k_0}(x,t,k)$ at $k=k_0$, and (iii) the local parametrixes $M^{par}_{\alpha}(x,t,k)$ and $M^{par}_{\overline{\alpha}}(x,t,k)$ at $k=\alpha$ and $k=\overline{\alpha}$ respectively.
The solution $q(x,t)$ of the original IV problem (\ref{fsivp}) is given in terms of $M^{err}$ and $M^{mod}$ by
\begin{subequations}\label{fsasolMerrE}
	\begin{align}\label{fssolMerrE}
	&q(x,t)=2ie^{2itH_{\infty}+2iG_{\infty}(k_0,\alpha)}
	\lim_{k\to\infty}k
	\left(M_{12}^{mod}(x,t,k)+M_{12}^{err}(x,t,k)\right),&& x>0,\\
	\label{fssol1MerrE}
	&q(-x,t)=-2ie^{2itH_{\infty}+2i\overline{G_{\infty}(k_0,\alpha)}}
	\lim_{k\to\infty}k
	\left(\overline{M_{21}^{mod}(x,t,k)}+
	\overline{M_{21}^{err}(x,t,k)}\right),&& x>0.
	\end{align}
\end{subequations}
Below we will show that the main contribution to the asymptotics is given in terms of $M^{mod}(x,t,k)$ and the first decaying terms involve $M^{par}_{k_0}(x,t,k)$ whereas $M^{par}_{\alpha}(x,t,k)$ and $M^{par}_{\overline{\alpha}}(x,t,k)$ contribute to the error term only.

Let us specify yet undetermined functions in the definition (\ref{fsMerrell}) of $M^{err}(x,t,k)$.

(i) \textit{Model problem $M^{mod}(x,t,k)$}

Neglecting all parts of contour $\tilde{\Gamma}$ where the jump matrix $J^{(5)}(x,t,k)$ converges to the identity matrix, we arrive at the following model RH problem on the contour $\hat\gamma_{21}\cup\hat\gamma_{31}\cup B$:
\begin{subequations}
\begin{align}
	\label{fsMparj}
	&M^{mod}_+(x,t,k)=M^{mod}_-(x,t,k)J^{mod}(x,t,k),&& k\in\hat\gamma_{21}\cup\hat\gamma_{31}\cup B,\\
	&M^{mod}(x,t,k)=I+O(k^{-1}),&& k\to\infty,
\end{align}
\end{subequations}
with
\begin{equation}
\label{fsJmodew}
J^{mod}(x,t,k)=
\begin{cases}
\begin{pmatrix}
0& -e^{-it\Omega-i\omega}\\
e^{it\Omega+i\omega}&0
\end{pmatrix},&k\in\hat\gamma_{21}\cup\hat\gamma_{31},\\
i\sigma_1,&k\in B.
\end{cases}
\end{equation}
The jump condition (\ref{fsMparj}) is similar to (5.52a), (5.52b) in \cite{BM17}.
Therefore, arguing  as in \cite{BM17} one concludes that 
$M^{mod}(x,t,k)$ has the from (cf. (5.78) in \cite{BM17})
\begin{equation}\label{fsMmodth}
	M^{mod}(x,t,k)=N^{-1}(t,\infty,c)N(t,k,c).
\end{equation}
The $2\times2$ matrix $N(t,k,c)$ is defined in terms of the theta functions as follows (cf. (5.68) in \cite{BM17})
\begin{equation}\label{fsNk}
N(t,k,c)=\frac{1}{2}
\begin{pmatrix}
	[p(k)+p^{-1}(k)]\mathbf{M}_1(t,k,c)& i[p(k)-p^{-1}(k)]\mathbf{M}_2(t,k,c)\\
	-i[p(k)-p^{-1}(k)]\mathbf{M}_1(t,k,-c)& [p(k)+p^{-1}(k)]\mathbf{M}_2(t,k,-c)
\end{pmatrix},
\end{equation}
with
\begin{equation}
	p(k)=\left[\frac{(k-iA)(k-\alpha)}{(k+iA)(k-\bar\alpha)}\right]^{\frac{1}{4}},
\end{equation}
and
\begin{equation}
\label{fsMth}
\mathbf{M}_j(t,k,c)=\frac{\Theta(\frac{\Omega t}{2\pi}+\frac{\omega}{2\pi}
+\frac{1}{4}+(-1)^{j+1}v(k)+c)}
{e^{(-1)^{j+1}i\frac{\pi}{4}}\Theta((-1)^{j+1}v(k)+c)},
\quad j=1,2.
\end{equation}
Introducing the normalized Abelian differential on the genus-1 Riemann surface $\Sigma$ by
\begin{equation}
dw=\frac{C}{\gamma(k)}\quad \text{with}\ C=\left(\int_{\mathfrak{b}}\frac{dk}{\gamma(k)}\right)^{-1},
\end{equation}
where $\gamma(k)$ is given by (\ref{fsgammaRS}), define $\tau$ as its $\mathfrak{a}$-period:
\begin{equation}
\tau=\int_{\mathfrak{a}}\,dw.
\end{equation}
Then the genus-1 theta function $\Theta(k)$ in (\ref{fsMth}) is defined via the third Jacobi theta function $\theta_3$ as follows:
\begin{equation}\label{fsg1thf}
	\Theta(k)=\theta_3(\pi k,e^{i\pi\tau})\equiv\sum\limits_{l\in\mathbb{Z}}e^{2i\pi lk+i\pi l^2\tau},
\end{equation}
and $v(k)$ in the argument of $\Theta$ in (\ref{fsMth}) is defined as the following Abelian map
\begin{equation}\label{fsv}
v(k)=\int_{iA}^{k}dw.
\end{equation}
The constant $c$ is chosen to compensate a possible singularity of $N(k,c)$:
\begin{equation}\label{fsc}
c=v(\hat{k}_0)+\frac{1}{2}(1+\tau),
\end{equation}
where $\hat{k}_0$ is a unique simple zero of the function $p(k)-p^{-1}(k)$:
\begin{equation}
\hat{k}_0=\frac{A\Re\alpha}{A+\Im\alpha}.
\end{equation}
Finally, we notice that $N(t,\infty, c)$ in (\ref{fsMmodth}) has the form
\begin{equation}\label{fsNinf}
N(t,\infty, c)\equiv\lim\limits_{k\to\infty}N(t,k,c)=
\begin{pmatrix}
\mathbf{M}_1(t,\infty,c)&0\\
0&\mathbf{M}_2(t,\infty,-c)
\end{pmatrix},
\end{equation}
where (see (\ref{fsMth}))
\begin{subequations}
\begin{align}
&\mathbf{M}_j(t,\infty,c)\equiv\lim_{k\to\infty}\mathbf{M}_j(t,k,c)=
\frac{\Theta(\frac{\Omega t}{2\pi}+\frac{\omega}{2\pi}
-\frac{1}{4}+(-1)^{j+1}v_{\infty}+c)}
{e^{(-1)^{j}i\frac{\pi}{4}}\Theta((-1)^{j+1}v_{\infty}+c)},\quad j=1,2.
\end{align}
\end{subequations}
with (see (\ref{fsv}))
\begin{equation}\label{fsvinfty}
v_{\infty}\equiv\lim_{k\to\infty}v(k)=\int_{iA}^{\infty}\,dw.
\end{equation}
Combining (\ref{fsMmodth}), (\ref{fsNk}), (\ref{fsNinf}) and
$$
\frac{i}{2}(p(k)-p^{-1}(k))=\frac{A+\Im\alpha}{2k}+O(k^{-2}),
\quad k\to\infty,
$$
we arrive at
\begin{subequations}\label{fsMmodask}
\begin{align}
&\lim\limits_{k\to\infty}kM^{mod}_{12}(x,t,k)=
\frac{A+\Im\alpha}{2}M_1^{-1}(t,\infty,c)M_2(t,\infty,c),\\
&\lim\limits_{k\to\infty}kM^{mod}_{21}(x,t,k)=
-\frac{A+\Im\alpha}{2}M_1(t,\infty,-c)M_2^{-1}(t,\infty,-c).
\end{align}
\end{subequations}
(ii) \textit{Local parametrix $M^{par}_{k_0}(x,t,k)$}

In the neighborhood of $k=k_0$, the matrix $M^{(5)}(x,t,k)$ satisfies the following jump condition on the contour $\Gamma_{k_0}$ (see Figure \ref{par_k0})
\begin{equation}
M^{(5)}_+(x,t,k)=M^{(5)}_-(x,t,k)J^{(5)}(x,t,k),\quad k\in\Gamma_{k_0},\quad k\text{ inside }S_{k_0}^{\varepsilon},
\end{equation}
where $J^{(5)}(x,t,k)$ on $\Gamma_{k_0}$ has the form
\begin{equation}
\label{fsJpark0}
J^{(5)}=
\begin{cases}
\begin{pmatrix}
1& \frac{\delta^{2}(k,k_0)G^{-2}(k,k_0,\alpha)}{r_1(k)}e^{-2ith}\\
0&1
\end{pmatrix},\, k\in\gamma_{22}^{k_0}\cup\gamma_{23}^{k_0}; \quad
\begin{pmatrix}
1& 0\\
-\frac{e^{2ith}G^{2}(k,k_0,\alpha)}{r_2(k)\delta^{2}(k,k_0)}&1
\end{pmatrix},\, k\in\gamma_{32}^{k_0}\cup\gamma_{33}^{k_0};\\
\begin{pmatrix}
1& \frac{r_2(k)\delta^{2}(k,k_0)G^{-2}(k,k_0,\alpha)}
{1+r_1(k)r_2(k)}e^{-2ith}\\
0& 1\\
\end{pmatrix}
,\, k\in\gamma_1^{k_0};\quad
\begin{pmatrix}
1& 0\\
\frac{-r_1(k)\delta^{-2}(k,k_0)G^{2}(k,k_0,\alpha)}
{1+r_1(k)r_2(k)}e^{2ith}& 1\\
\end{pmatrix}
,\, k\in\gamma_4^{k_0};\\
\begin{pmatrix}
0& -e^{-it\Omega-i\omega}\\
e^{it\Omega+i\omega}& 0
\end{pmatrix},\,k\in\gamma_{21}^{k_0}\cup\gamma_{31}^{k_0}.
\end{cases}
\end{equation}
\begin{figure}[h]
	\begin{minipage}[h]{0.49\linewidth}
		\centering{\includegraphics[width=0.99\linewidth]{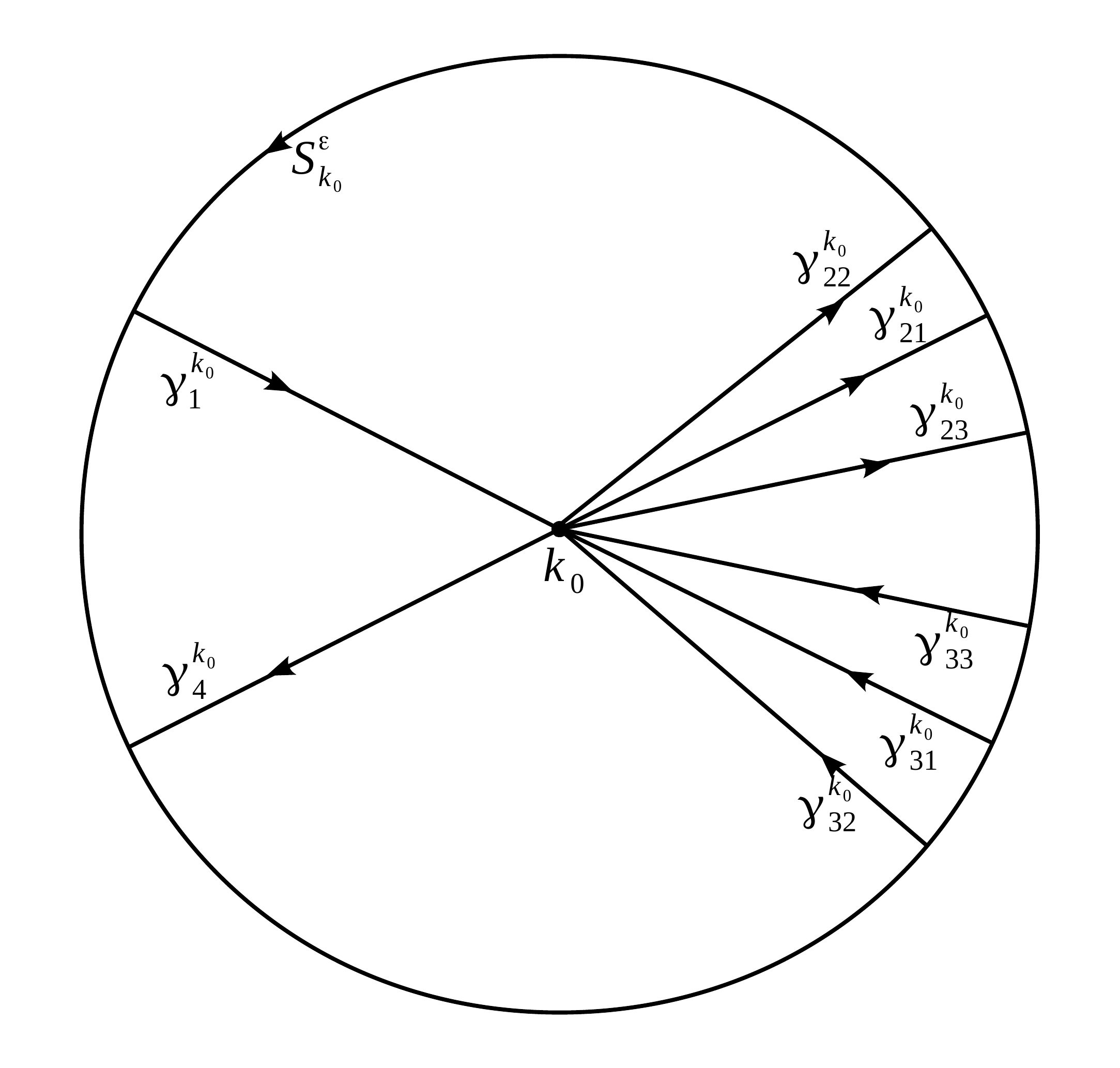}}
		\caption{Contour 
			$\Gamma_{k_0}=\gamma_{1}^{k_0}\cup\gamma_{4}^{k_0}\cup
			\left(
			\bigcup\limits_{i=2}^{3}
			\bigcup\limits_{j=1}^{3}
			\gamma_{ij}^{k_0}\right)$
			in $k$-plane. Here $\gamma_{ij}^{k_0}=\hat\gamma_{ij}$, $\gamma_1^{k_0}=\hat\gamma_1$ and $\gamma_4^{k_0}=\hat\gamma_4$ for $k$ inside $S_{k_0}^{\varepsilon}$.}
		\label{par_k0}
	\end{minipage}
	\hfill
	\begin{minipage}[h]{0.49\linewidth}
		\centering{\includegraphics[width=0.99\linewidth]{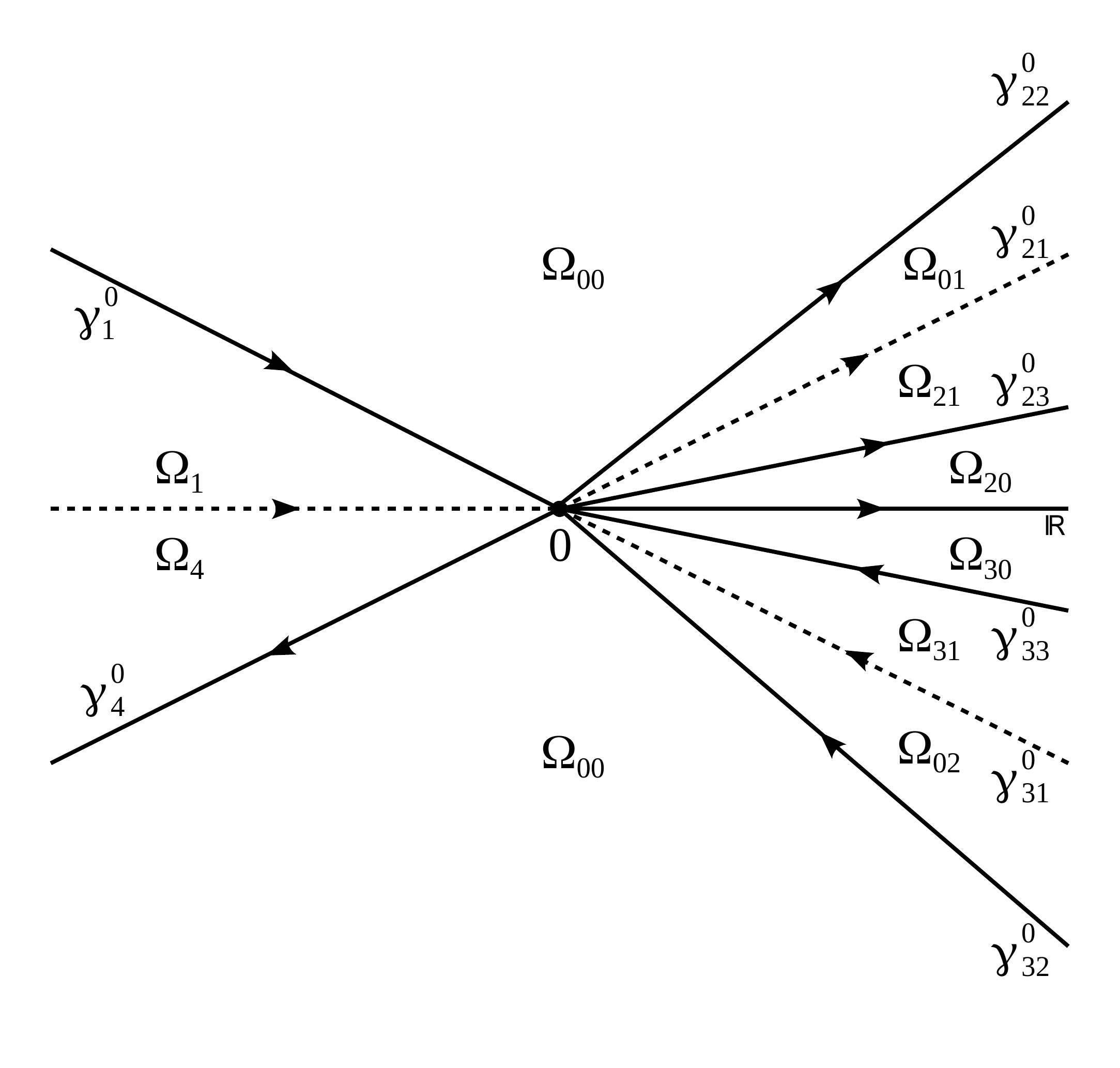}}
		\caption{Contour 
			$\Gamma_{k_0}^0=(0,\infty)\cup\gamma_{1}^{0}\cup\gamma_{4}^{0}\cup
			\left(
			\bigcup\limits_{i=2}^{3}
			\bigcup\limits_{j=2}^{3}
			\gamma_{ij}^{0}\right)$ and domains $\Omega$'s in $z$-plane.}
		\label{par_parab}
	\end{minipage}
\end{figure}
Introduce the piecewise constant function $B(k)$ by (cf. (6.13) in \cite{BLS20})
\begin{equation}
B(k)=
\begin{cases}
e^{-ith_+(k)\sigma_3},&k\text{ on the left of }
\gamma_{21}^{k_0}\cup\gamma_{31}^{k_0},
\quad k\text{ inside }S_{k_0}^{\varepsilon},\\
e^{-ith_-(k)\sigma_3},&k\text{ on the right of }
\gamma_{21}^{k_0}\cup\gamma_{31}^{k_0},
\quad k\text{ inside }S_{k_0}^{\varepsilon},\\
\end{cases}
\end{equation}
and define $M_{0}^{par}(x,t,k)$ in the neighborhood of $k_0$ as follows:
\begin{equation}
M_{0}^{par}(x,t,k)=M^{(5)}(x,t,k)B(k),
\quad k\text{ inside } S_{k_0}^{\varepsilon},\,k\not\in\Gamma_{k_0}.
\end{equation}
Then $M_{0}^{par}(x,t,k)$ satisfies the same jump condition on $\Gamma_{k_0}$ as $M^{(5)}(x,t,k)$ but (i) the jump across $\gamma_{21}^{k_0}\cup\gamma_{31}^{k_0}$ has the form 
$\begin{pmatrix}
0& -e^{-i\omega}\\
e^{i\omega}& 0
\end{pmatrix}$
and, more importantly, (ii) instead of $h(k)$, which is non-analytic inside $S_{k_0}^{\varepsilon}$, we arrive at the new phase function $h_{k_0}(k)$ given by
\begin{equation}
h_{k_0}(k)=
\begin{cases}
h(k)-h_+(k),&k\text{ on the left of }
\gamma_{21}^{k_0}\cup\gamma_{31}^{k_0},
\quad k\text{ inside }S_{k_0}^{\varepsilon},\\
h(k)-h_-(k),&k\text{ on the right of }
\gamma_{21}^{k_0}\cup\gamma_{31}^{k_0},
\quad k\text{ inside }S_{k_0}^{\varepsilon},\\
\end{cases}
\end{equation}
which has no jump across $\gamma_{21}^{k_0}\cup\gamma_{31}^{k_0}$.

Since $G(k,k_0,\alpha)$ does not have a limit as $k\to k_0$, we can't proceed  as in the plane wave case by introducing $\tilde{r}_1(k)=r_1(k)G^2(k,k_0,\alpha)$ and 
$\tilde{r}_2(k)=r_2(k)G^{-2}(k,k_0,\alpha)$, cf. (\ref{fsrjtilde}).
To eliminate the singularity at $k=k_0$ of $M_0^{par}$, we
introduce $\hat G(k)=\hat G(k,k_0,\alpha)$ so that $Y_{k_0}(x,t,k)$ (see (\ref{fsY}) below) is analytic inside $S_{k_0}^{\varepsilon}$:
\begin{equation}
\hat G(k)=
\begin{cases}
G^{\sigma_3}(k,k_0,\alpha),\quad k\text{ on the left of }
\gamma_{21}^{k_0}\cup\gamma_{31}^{k_0},
\quad k\text{ inside }S_{k_0}^{\varepsilon};\\
\begin{pmatrix}
0& \frac{\delta^2(k,k_0)}{r_1(k)}e^{-2ith_{k_0}(k)}\\
-\frac{r_1(k)}{\delta^2(k,k_0)}e^{2ith_{k_0}(k)}& 0
\end{pmatrix}
G^{\sigma_3}(k,k_0,\alpha),\\
\qquad\qquad\qquad\qquad\qquad k\text{ on the right of }
\gamma_{21}^{k_0}\cup\gamma_{31}^{k_0},
\, k\text{ inside }S_{k_0}^{\varepsilon},\,k\in\mathbb{C}^{+};\\
\begin{pmatrix}
0& \delta^2(k,k_0)r_2(k)e^{-2ith_{k_0}(k)}\\
-\delta^{-2}(k,k_0)r_2^{-1}(k)e^{2ith_{k_0}(k)}& 0
\end{pmatrix}
G^{\sigma_3}(k,k_0,\alpha),\\
\qquad\qquad\qquad\qquad\qquad k\text{ on the right of }
\gamma_{21}^{k_0}\cup\gamma_{31}^{k_0},
\, k\text{ inside }S_{k_0}^{\varepsilon},\,k\in\mathbb{C}^{-}.\\
\end{cases}
\end{equation}
Define $M^{par}_1(x,t,k)$ in the neighborhood of $k_0$ as follows:
\begin{equation}
M_{1}^{par}(x,t,k)=M_{0}^{par}(x,t,k)\hat G^{-1}(k),
\quad k\text{ inside } S_{k_0}^{\varepsilon},\,k\not\in\Gamma_{k_0}.
\end{equation}
Then $M_1^{par}(x,t,k)$ satisfies the following jump condition across $\Gamma_{k_0}\cup(k_0,k_0+\varepsilon)$:
\begin{equation}
(M_1^{par})_+(x,t,k)=(M_1^{par})_-(x,t,k)J_1^{par}(x,t,k),
\quad k\in\Gamma_{k_0}\cup(k_0,k_0+\varepsilon),
\quad k\text{ inside }S_{k_0}^{\varepsilon},
\end{equation}
where
\begin{equation}
\label{fsJpar1}
J_1^{par}=
\begin{cases}
\begin{pmatrix}
1& \frac{r_2(k)\delta^{2}(k,k_0)}
{1+r_1(k)r_2(k)}e^{-2ith_{k_0}}\\
0& 1\\
\end{pmatrix}
,\, k\in\gamma_1^{k_0};\quad
\begin{pmatrix}
1& 0\\
\frac{-r_1(k)\delta^{-2}(k,k_0)}
{1+r_1(k)r_2(k)}e^{2ith_{k_0}}& 1\\
\end{pmatrix}
,\, k\in\gamma_4^{k_0};\\
\begin{pmatrix}
1& \frac{\delta^{2}(k,k_0)}{r_1(k)}e^{-2ith_{k_0}}\\
0&1
\end{pmatrix},\, k\in\gamma_{22}^{k_0}; \quad
\begin{pmatrix}
1& 0\\
-\frac{r_1(k)}{\delta^{2}(k,k_0)}e^{2ith_{k_0}}
\end{pmatrix}
,\, k\in\gamma_{23}^{k_0};\\
\begin{pmatrix}
1& 0\\
-\frac{e^{2ith_{k_0}}}{r_2(k)\delta^{2}(k,k_0)}&1
\end{pmatrix},\, k\in\gamma_{32}^{k_0};\quad
\begin{pmatrix}
1& r_2(k)\delta^{2}(k,k_0)e^{-2ith_{k_0}}\\
0&1
\end{pmatrix},\, k\in\gamma_{33}^{k_0};\\
I,\,k\in\gamma_{21}^{k_0}\cup\gamma_{31}^{k_0};\quad
\begin{pmatrix}
r_1(k)r_2(k)& 0\\
0& r_1^{-1}(k)r_2^{-1}(k)
\end{pmatrix},\,k\in(k_0,k_0+\varepsilon).
\end{cases}
\end{equation}

Now we are in a position to approximate $M^{par}_1(x,t,k)$ by an exactly solvable RH problem,  which, in turn, leads to the approximation of $M^{(5)}(x,t,k)$ by a local parametrix inside $S_{k_0}^{\varepsilon}$.
Indeed, expanding the phase function $h_{k_0}(k)$ in a vicinity of the stationary phase point $k=k_0$ (cf. (\ref{fsths}))
\begin{equation}\label{fshk0s}
h_{k_0}(k)=\sum\limits_{n=2}^{\infty}\hat{h}_n(k-k_0)^n,
\quad \hat{h}_n\equiv\hat{h}_n(k_0)=\frac{h_{k_0}^{(n)}(k)}{n!},
\end{equation}
define the scaled parameter $z$ by
\begin{equation}
z=2\sqrt{t}(k-k_0)\left(
\sum\limits_{n=2}^{\infty}\hat{h}_n(k-k_0)^{n-2}\right)^{1/2}\equiv
\sqrt{t}\sum\limits_{n=1}^{\infty}\hat\alpha_n(k-k_0)^n.
\end{equation}
Then the phase function $h_{k_0}(k)$ can be written in terms of $z$ as follows (cf. (\ref{fsthz})):
\begin{equation}
h_{k_0}(k(z))=\frac{z^2}{4t},
\end{equation}
and $(k-k_0)$ can be expressed in terms of $z$ by (cf. (\ref{fskk1}))
\begin{equation}\label{fskk0}
k-k_0=\sum\limits_{n=1}^{\infty}\hat\beta_n
\left(\frac{z}{\sqrt{t}}\right)^n,\quad
k\text{ inside }S_{k_1}^{\varepsilon},
\end{equation}
where $\hat\beta_n$ can be found recursively in terms of $\hat\alpha_n$. Taking into account the following approximations
\begin{equation}
r_j(k(z))\approx r_j(k_0),\quad j=1,2,\qquad
\delta(k(z),k_1)\approx \hat\beta_1^{i\nu(k_0)}e^{\chi(k_0,k_0)}
t^{-\frac{i\nu(k_0)}{2}}z^{i\nu(k_0)},
\end{equation}
 we introduce $M^{par}(x,t,k)$ as follows:
\begin{equation}
M^{par}_{k_0}(x,t,k)=B(k)\hat G^{-1}(k)
\hat\Delta_0(k_0,t)
m^{\Gamma^{0}_{k_0}}(k_0,z(k))
\hat\Delta_0^{-1}(k_0,t)\hat G(k)B^{-1}(k).
\end{equation}
Here $\hat\Delta_0(k_0,t)=
\hat\beta_1^{i\nu(k_0)\sigma_3}
t^{-\frac{i\nu(k_0)}{2}\sigma_3}
e^{\chi(k_0,k_0)\sigma_3}$ and $m^{\Gamma^{0}_{k_0}}(k_0,z)$ (the ``simplified'' matrix $M^{par}_1(x,t,k(z))$) is given by (see Figure \ref{par_parab}; cf. (\ref{fsm-g-0}))
\begin{equation}
m^{\Gamma^{0}_{k_0}}(k_0,z)=m_0(k_0,z)
\begin{cases}
D_0(k_0,z),&z\in\Omega_{00},\\
\begin{pmatrix}
1& -\frac{r_2(k_0)}{1+r_1(k_0)r_2(k_0)}\\
0&1
\end{pmatrix}D_0^{-1}(k_0,z),&z\in\Omega_1,\\
\begin{pmatrix}
1& 0\\
\frac{r_1(k_0)}{1+r_1(k_0)r_2(k_0)}&1
\end{pmatrix}D_0^{-1}(k_0,z),&z\in\Omega_4,\\
\begin{pmatrix}
1& -r_1^{-1}(k_0)\\
0&1
\end{pmatrix}D_0^{-1}(k_0,z),&z\in\Omega_{01}\cup\Omega_{21},\\
\begin{pmatrix}
1& 0\\
r_2^{-1}(k_0)&1
\end{pmatrix}D_0^{-1}(k_0,z),&z\in\Omega_{02}\cup\Omega_{31},\\
\begin{pmatrix}
0& -r_1^{-1}(k_0)\\
r_1(k_0)&1
\end{pmatrix}D_0^{-1}(k_0,z),&z\in\Omega_{20},\\
\begin{pmatrix}
1& -r_2(k_0)\\
r_2^{-1}(k_0)&0
\end{pmatrix}D_0^{-1}(k_0,z),&z\in\Omega_{30},\\
\end{cases}
\end{equation}
where $D_0(k_0,z)$ is given by (\ref{fsD0}) and $m_0(k_0,z)$ solves the RH problem (\ref{fsm0}) with a constant jump matrix.

Arguing as in Appendix A (see also, e.g., \cite{RS, BLS20, L17}) one concludes that the main contribution to the asymptotics of $M^{err}(x,t,k)$ is given by the integral along the circle $S_{k_0}^{\varepsilon}$:
\begin{equation}\label{fsasMerrk0}
\lim\limits_{k\to\infty} k(M^{err}(x,t,k)-I)=-\frac{1}{2\pi i}
\int\limits_{|k-k_0|=\varepsilon}M^{mod}
\left(\left[M^{par}_{k_0}(x,t,k)\right]^{-1}
-I\right)\left[M^{mod}\right]^{-1}\,dk+\hat{R}(x,t),
\end{equation}
where $\hat{R}(x,t)$ is the same as in (\ref{fsasMerr}) and $\left[M^{par}_{k_0}(x,t,k)\right]^{-1}$ is inferred from the large-$z$ asymptotics of $m^{\Gamma_{k_0}^0}$, which is the same as (\ref{fsmgamma}) but with $\beta(k_0)$ and $\gamma(k_0)$ instead of $\beta(k_1)$ and $\gamma(k_1)$ respectively:
\begin{align}\label{fsMpark0as}
\nonumber
\left[M^{par}(x,t,k)\right]^{-1} &=B(k)\hat G^{-1}(k)\hat\Delta_0(k_0,t)
\left[m^{\Gamma_{k_0}^{0}}(x, \hat\beta_1^{-1}\sqrt{t}(k-k_0))\right]^{-1}
\hat\Delta_0^{-1}(k_0,t)\hat G(k)B^{-1}(k)\\
&= I + B(k)\hat G^{-1}(k)
\frac{\hat\beta_1\hat{B}(x,t)}{\sqrt{t}(k-k_0)}
\hat G(k)B^{-1}(k)
+\tilde{R}(x,t),
\end{align}
where $\tilde{R}(x,t)$ is the same as in (\ref{fsMparas}) and
(cf. (\ref{fsB}))
\begin{equation}
\hat{B}(x,t)=\begin{pmatrix}
0 & -i\beta(k_0)e^{2\chi(k_0,k_0)}
\left(\frac{t}{\hat\beta_1^2}\right)^{-i\nu(k_0)} \\
i\gamma(k_0)e^{-2\chi(k_0,k_0)}\left(\frac{t}{\hat\beta_1^2}\right)^{i\nu(k_0)} & 0
\end{pmatrix}.
\end{equation}
From (\ref{fsasMerrk0}) and (\ref{fsMpark0as}) we obtain
\begin{subequations}\label{kinftyk0}
\begin{align}
&\lim\limits_{k\to\infty}
\left[k\left(M^{err}(x,t,k)-I\right)\right]_{12}=
t^{-\frac{1}{2}-\Im\nu(k_0)}\tilde c_5(k_0,t)
+t^{-\frac{1}{2}+\Im\nu(k_0)}\tilde c_6(k_0,t)+R(x,t),\\
&\lim\limits_{k\to\infty}
\left[k\left(M^{err}(x,t,k)-I\right)\right]_{21}=
t^{-\frac{1}{2}-\Im\nu(k_0)}\tilde c_7(k_0,t)
+t^{-\frac{1}{2}+\Im\nu(k_0)}\tilde c_8(k_0,t)+R(x,t),
\end{align}
\end{subequations}
with $R(x,t)$, $j=1,2$ given by (\ref{fsR3}) and
\begin{subequations}
\label{fstck0}
\begin{align}
&\tilde c_5(k_0,t)=i\gamma(k_0)
e^{-2\chi(k_0,k_0)}\hat\beta_1^{-2i\nu(k_0)+1}
t^{i\Re\nu(k_0)}(Y_{12})^{2}(x,t,k_0),\\
&\tilde c_6(k_0,t)=i\beta(k_0)
e^{2\chi(k_0,k_0)}\hat\beta_1^{2i\nu(k_0)+1}
t^{-i\Re\nu(k_0)}(Y_{11})^{2}(x,t,k_0),\\
&\tilde c_7(k_0,t)=-i\gamma(k_0)
e^{-2\chi(k_0,k_0)}\hat\beta_1^{-2i\nu(k_0)+1}
t^{i\Re\nu(k_0)}(Y_{22})^{2}(x,t,k_0),\\
&\tilde c_8(k_0,t)=-i\beta(k_0)
e^{2\chi(k_0,k_0)}\hat\beta_1^{2i\nu(k_0)+1}
t^{-i\Re\nu(k_0)}(Y_{21})^{2}(x,t,k_0),
\end{align}
\end{subequations}
where the analytical (owing to the appropriate choice of $\hat G(k)$) function $Y(x,t,k)$ has the form
\begin{equation}\label{fsY}
Y(x,t,k)=M^{mod}(x,t,k)B(k)\hat G^{-1}(k).
\end{equation}
(iii) \textit{Local parametrixes $M^{par}_{\alpha}(x,t,k)$ and $M^{par}_{\overline{\alpha}}(x,t,k)$}

In the neighborhood of $k=\alpha$, the  parametrix $M_{\alpha}^{par}(x,t,k)$ satisfies the following jump condition on the contour $\Gamma_{\alpha}$ (see Figure \ref{par_alph})
\begin{equation}
(M_{\alpha}^{par})_+(x,t,k)=(M_{\alpha}^{par})_-(x,t,k)J_{\alpha}^{par}(x,t,k),\quad k\in\Gamma_{\alpha},
\quad k\text{ inside }S_{\alpha}^{\varepsilon},
\end{equation}
where
\begin{equation}
J_{\alpha}^{par}=
\begin{cases}
\begin{pmatrix}
1& 0\\
\frac{r_1(k)}{\delta^{2}(k,k_0)}G^{2}(k,k_0,\alpha)e^{2ith}&1
\end{pmatrix},\, k\in\gamma_{20}^\alpha;\quad
\begin{pmatrix}
0& -e^{-it\Omega-i\omega}\\
e^{it\Omega+i\omega}&0
\end{pmatrix},\,k\in\gamma_{21}^\alpha;\\
\begin{pmatrix}
1& \frac{\delta^{2}(k,k_0)}{r_1(k)}G^{-2}(k,k_0,\alpha)e^{-2ith}\\
0&1
\end{pmatrix},\, k\in\gamma_{22}^\alpha\cup\gamma_{23}^\alpha.
\end{cases}
\end{equation}
\begin{figure}[h]
	\begin{minipage}[h]{0.49\linewidth}
		\centering{\includegraphics[width=0.99\linewidth]{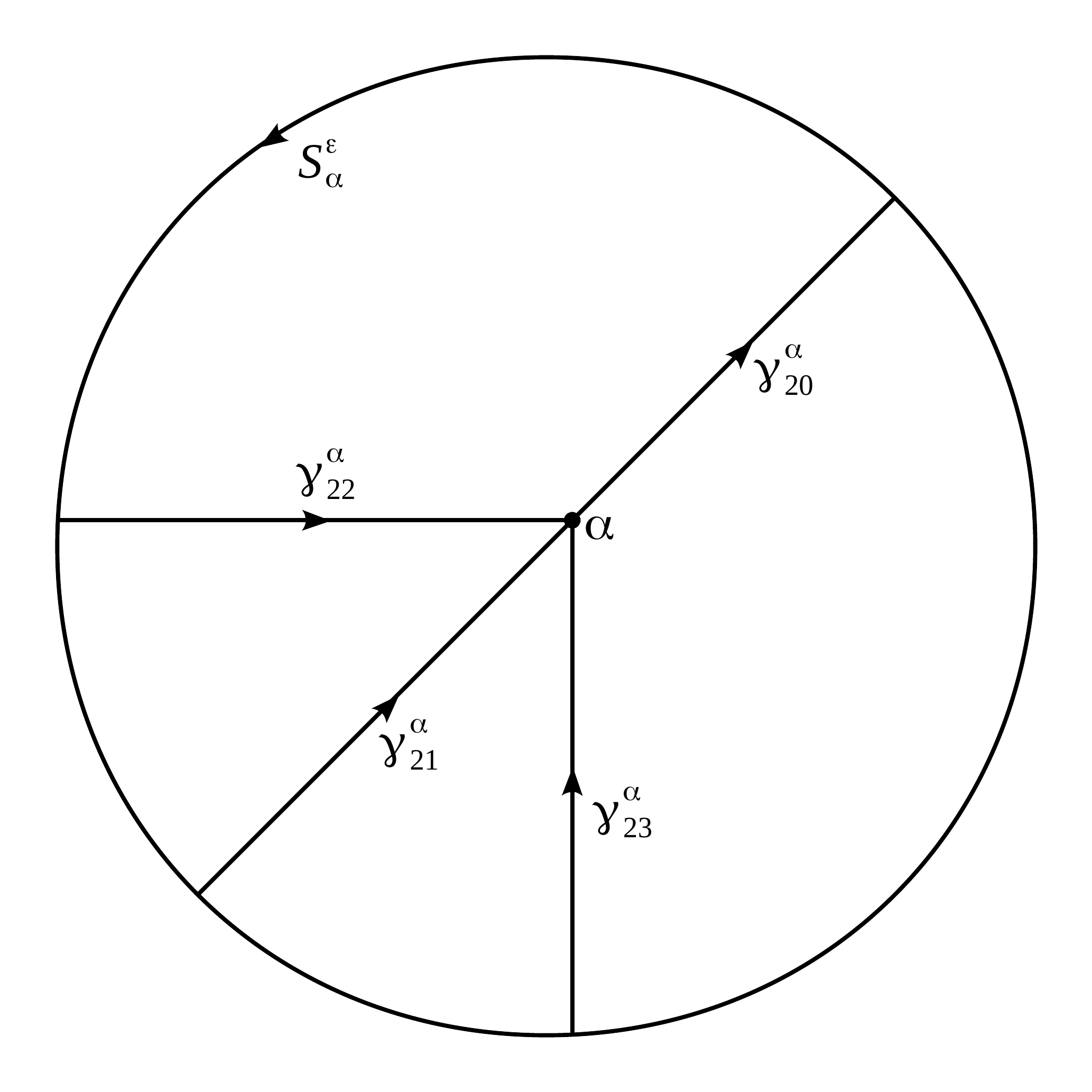}}
		\caption{Contour 
			$\Gamma_{\alpha}=\bigcup\limits_{j=0}^{3}\gamma_{2j}^{\alpha}$
			in $k$-plane. Here $\gamma_{2j}^{\alpha}=\hat\gamma_{2j}$ for $k$ inside $S_{\alpha}^{\varepsilon}$, $j=\overline{0,3}$.}
		\label{par_alph}
	\end{minipage}
	\hfill
	\begin{minipage}[h]{0.49\linewidth}
		\centering{\includegraphics[width=0.99\linewidth]{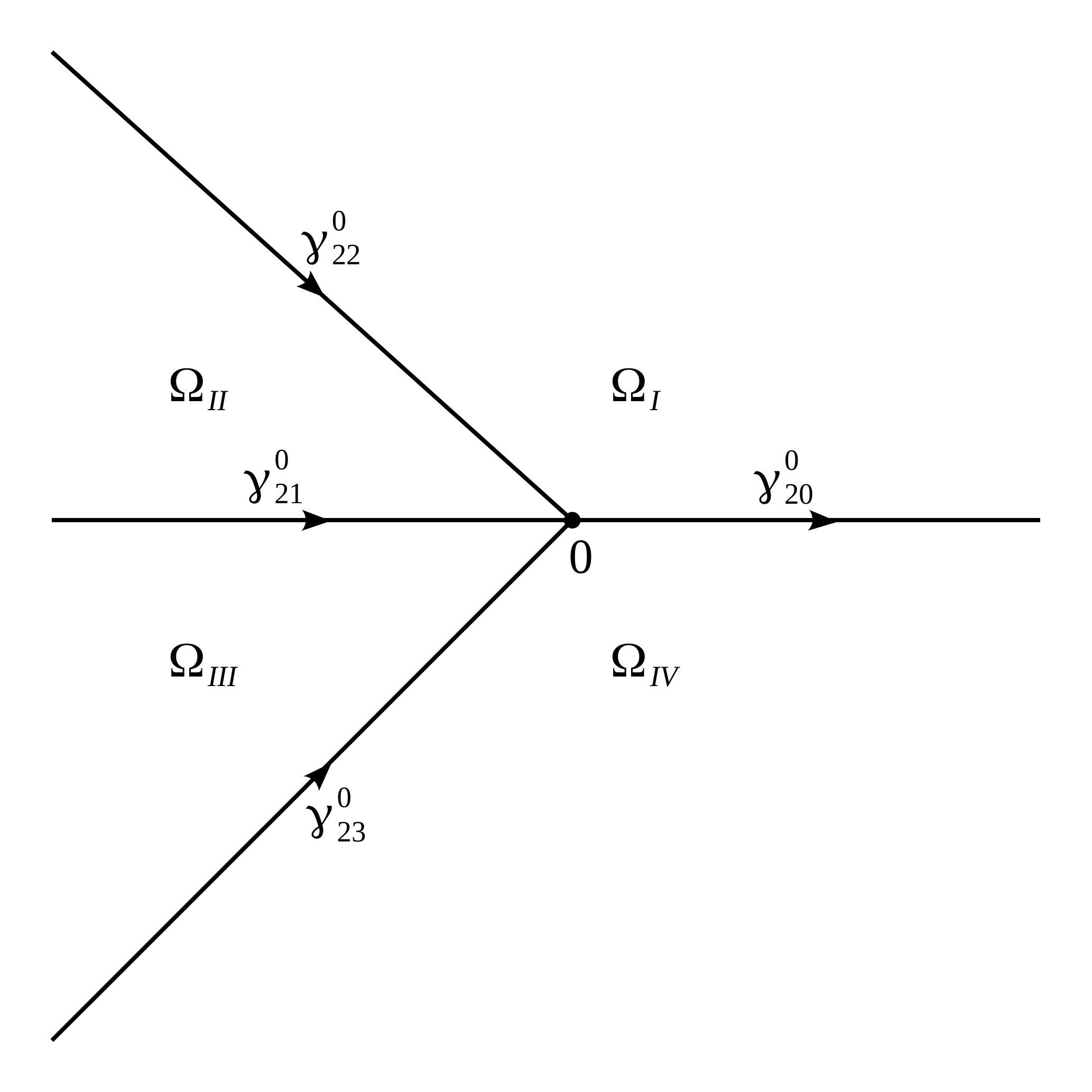}}
		\caption{Contour 
		$\Gamma_{\alpha}^0=\bigcup\limits_{j=0}^{3}\gamma_{2j}^0$ and domains $\Omega_{j}, j=I,\dots,IV$ in $z$-plane.}
		\label{par_Ai}
	\end{minipage}
\end{figure}
Introduce $\tilde{M}_{\alpha}^{par}(x,t,k)$ as follows:
\begin{equation}\label{fstMpardef}
\tilde{M}_{\alpha}^{par}=E^{-1}(x,t,k)
\left(\frac{\delta(k,k_0)}{\sqrt{r_1(k)}}G^{-1}(k,k_0,\alpha)e^{-ith(\alpha)}\right)^{-\sigma_3}
M^{par}_{\alpha}
\left(\frac{\delta(k,k_0)}{\sqrt{r_1(k)}}G^{-1}(k,k_0,\alpha)e^{-ith(k)}\right)^{\sigma_3},
\end{equation}
where the $2\times2$ matrix $E(x,t,k)$ will be determined below. Direct calculations show that $\tilde{M}_{\alpha}^{par}(x,t,k)$ satisfies the following jump condition across $\Gamma_{\alpha}$:
\begin{equation}\label{fstMparj}
(\tilde M_{\alpha}^{par})_+(x,t,k)=(\tilde M_{\alpha}^{par})_-(x,t,k)
\tilde J_{\alpha}^{par}(x,t,k),\quad k\in\Gamma_{\alpha},
\quad k\text{ inside }S_{\alpha}^{\varepsilon}
\end{equation}
where
\begin{equation}
\tilde J_{\alpha}^{par}=
\begin{cases}
\begin{pmatrix}
1& 0\\
1&1
\end{pmatrix},\, k\in\gamma_{20}^\alpha;\quad
\begin{pmatrix}
0& -1\\
1&0
\end{pmatrix},\,k\in\gamma_{21}^\alpha;\\
\begin{pmatrix}
1& 1\\
0&1
\end{pmatrix},\, k\in\gamma_{22}^\alpha\cup\gamma_{23}^\alpha.
\end{cases}
\end{equation}
The latter jump conditions are satisfied by a sectionally analytic matrix function that can be determined in terms of the Airy functions \cite{DKMVZ, BV07}. Indeed, the function (see Figure \ref{par_Ai})
\begin{equation}\label{fstMparz}
\tilde{M}^{par}_\alpha(z)=
\begin{cases}
\begin{pmatrix}
Ai(e^{4i\pi/3}z)& Ai(z)\\
e^{4i\pi/3}Ai^\prime(e^{4i\pi/3}z)& Ai^\prime(z)
\end{pmatrix}e^{i\pi\sigma_3/6},&z\in\Omega_\mathrm{I},\\
\begin{pmatrix}
Ai(e^{4i\pi/3}z)& Ai(z)\\
e^{4i\pi/3}Ai^\prime(e^{4i\pi/3}z)& Ai^\prime(z)
\end{pmatrix}e^{i\pi\sigma_3/6}
\begin{pmatrix}
1& -1\\
0& 1
\end{pmatrix}
,&z\in\Omega_\mathrm{II},\\
\begin{pmatrix}
-e^{4i\pi/3}Ai(e^{2i\pi/3}z)& Ai(z)\\
-Ai^\prime(e^{2i\pi/3}z)& Ai^\prime(z)
\end{pmatrix}e^{i\pi\sigma_3/6}
\begin{pmatrix}
1& 1\\
0& 1
\end{pmatrix},
&z\in\Omega_\mathrm{III},\\
\begin{pmatrix}
-e^{4i\pi/3}Ai(e^{2i\pi/3}z)& Ai(z)\\
-Ai^\prime(e^{2i\pi/3}z)& Ai^\prime(z)
\end{pmatrix}e^{i\pi\sigma_3/6},
&z\in\Omega_\mathrm{IV}.
\end{cases}
\end{equation}
satisfies the jump conditions (\ref{fstMparj}) on $\Gamma_{\alpha}^0$, which can be checked by using the identity \cite{ONIST}
$$
Ai(z)+e^{2i\pi/3}Ai(e^{2i\pi/3}z)+e^{4i\pi/3}Ai(e^{4i\pi/3}z)=0.
$$
The scaled variable $z=z(x,t,k)$ is chosen according to the asymptotics of the Airy functions:
\begin{subequations}\label{fsAias}
\begin{align}
&Ai(z)\sim\frac{1}{2\sqrt{\pi}}z^{-\frac{1}{4}}e^{-\frac{2}{3}z^{3/2}}
(1+O(z^{-\frac{3}{2}})),&&|\arg(z)|<\pi,\quad z\to\infty,\\
&Ai^\prime(z)\sim-\frac{1}{2\sqrt{\pi}}z^{\frac{1}{4}}e^{-\frac{2}{3}z^{3/2}}
(1+O(z^{-\frac{3}{2}})),&&|\arg(z)|<\pi,\quad z\to\infty.
\end{align}
\end{subequations}
In order to eliminate the exponentially growing term $e^{ith(k)}$ in (\ref{fstMpardef}), the asymptotics of $Ai(z)$ suggests to introduce the scaled variable $z(x,t,k)$ in the following way:
\begin{equation}
z(x,t,k)=(k-\alpha)\left(-\frac{3}{2}itH_{\alpha}(k)\right)^{\frac{2}{3}},
\end{equation}
where the analytic function $H_{\alpha}(k)$ is given by (\ref{fshalph}).
Choosing an appropriate branch in the definition of $\zeta$ and deforming the contours $\gamma_{2j}^{\alpha}$, $j=\overline{0,3}$ if necessary, we obtain that the biholomorphism $z(x,t,k)$ maps $\gamma_{2j}^{0}$ onto $\gamma_{2j}^{\alpha}$ for all $j=\overline{0,3}$.

From (\ref{fsAias}) and (\ref{fstMparz}) we have
\begin{equation}
\tilde{M}_{\alpha}^{par}(z)e^{-\frac{2}{3}z^{3/2}\sigma_3}=
\frac{e^{i\pi/12}}{2\sqrt{\pi}}
\begin{pmatrix}
z^{-\frac{1}{4}}&0\\
0&z^{\frac{1}{4}}
\end{pmatrix}
\left[
\begin{pmatrix}
1& 1\\
1& -1
\end{pmatrix}e^{i\pi\sigma_3/4}
+O(z^{-\frac{3}{2}})
\right].
\end{equation}
The latter asymptotic expansion suggests to define the (yet undetermined) function $E(x,t,k)$ as follows:
\begin{equation}
E(x,t,k)=\sqrt{\pi}e^{-i\pi/12}e^{-i\pi\sigma_3/4}
\begin{pmatrix}
1& 1\\
1& -1
\end{pmatrix}
\begin{pmatrix}
z^{\frac{1}{4}}(x,t,k)& 0\\
0& z^{-\frac{1}{4}}(x,t,k)
\end{pmatrix},
\end{equation}
which implies
\begin{equation}\label{fsMalph}
\tilde{M}^{par}_{\alpha}(x,t,k)\equiv\tilde{M}^{par}_{\alpha}(z(x,t,k))=I+O(t^{-1}),\quad t\to\infty.
\end{equation}
The treatment of the local parametrix at $k=\bar{\alpha}$ is similar.

Combining (\ref{fsasolMerrE}), (\ref{fsMmodask}), (\ref{kinftyk0}) and (\ref{fsMalph}) we obtain (\ref{fsasellw}).
\end{appendices}

\end{document}